\theoremstyle{plain}
\newtheorem{Theorem}{Theorem}[section]
\newtheorem{Example}{Example}[section]
\newtheorem{Corollary}{Corollary}[section]
\newtheorem{Lemma}{Lemma}[section]
\newtheorem{Definition}{Definition}[section]
\newtheorem{Remark}{Remark}[section]
\begin{document}

\title[On certain notions of precompactness, continuity and Lipschitz functions ]
{On certain notions of precompactness, continuity and Lipschitz functions}

\author{Pratulananda Das, Sudip Kumar Pal*, Nayan Adhikary}

\address{Department of Mathematics, Jadavpur University, Kolkata-700032, West Bengal, India, E-mail: pratulananda@yahoo.co.in, nayanadhikarysh@gmail.com}
\address{*Corresponding Author, Department of Mathematics, Diamond Harbour Women's University, Sarisa-743368, West Bengal, India, E-mail: sudipkmpal@yahoo.co.in}

\subjclass[2010]{Primary: 54D20; Secondary: 54C30, 54A25}

\begin{abstract}
The underlying theme of this article is a class of sequences in metric structures satisfying a much weaker kind of Cauchy condition, namely quasi-Cauchy sequences (introduced in \cite{bc}) that has been used to define several new concepts in recent articles  \cite{PDSPNA2, PDSPNA1}. We first consider a weaker notion of precompactness based on the idea of quasi-Cauchy sequences and establish several results including a new characterization of compactness in metric spaces. Next we consider associated idea of continuity, namely, ward continuous functions \cite{caka}, as this class of functions strictly lies between the classes of continuous and uniformly continuous functions and mainly establish certain coincidence results. Finally a new class of Lipschitz functions called ``quasi-Cauchy Lipschitz functions" is  introduced following the line of investigations in  \cite{Beer1,Beer2,Beer3,g1} and again several coincidence results are proved. The motivation behind such kind of Lipschitz functions is ascertained by the   observation that every real valued ward continuous function defined on a metric space can be uniformly approximated by real valued quasi-Cauchy Lipschitz functions.
\end{abstract}
\maketitle
\smallskip
\noindent{\bf\keywordsname{}:} {Quasi-Cauchy sequence, Ward continuity, Bourbaki quasi-precompact space, Quasi-Cauchy Lipschitz function.}

\section{Introduction}
The notion of continuity along with its several versions have always intrigued mathematicians. Not only such functions, but also the sequences which characterize various types of continuity have been of great research interest. It is well known that the continuous (Cauchy continuous) functions are characterized by convergent (Cauchy) sequences. A much weaker version of Cauchy sequences, namely, quasi-Cauchy sequences were introduced in 2010 \cite{bc} where the distance between two successive terms tends to zero and consequently one arrives at a stronger version of Cauchy continuous functions, namely, ward continuous functions \cite{caka} which preserve quasi-Cauchy sequences. Evidently Cauchy sequences are quasi-Cauchy but the converse is not generally true as the counter example is provided by the sequence of partial sums of the harmonic series. This and several such examples establish the important fact that the class of quasi-Cauchy sequences is much bigger than the class of Cauchy sequences, taking in the process more sequences under the purview. Understandably mathematical consequences are not analogous to the already existing notions based on Cauchy sequences, like the usual idea of precompactness, Cauchy-continuous functions or more recent idea of Cauchy-Lipschitz functions as these sequences don't have the easy features of Cauchy sequences (for example every subsequence of a Cauchy sequence is Cauchy whereas the analogous property fails for quasi-Cauchy sequences). This makes investigations of notions associated with quasi-Cauchy sequences much more challenging and non-trivial.\par
It is well known that precompact spaces are those spaces where every sequence has a Cauchy subsequence. Considering quasi-Cauchy sequences instead, we consider a weaker notion of precompactness, in our context which we call ``Bourbaki quasi-precompact space", though it is known as $\alpha$-bounded space in literature(see \cite{tash1,tash2,g3} for easy reference). Section 2 of this paper is devoted to the study of this notion and many of its consequences in new aspects. We define Bourbaki quasi-Cauchy sequences in line of Bourbaki-Cauchy sequences \cite{g2} and establish in Theorem \ref{a} that a sequence is Bourbaki quasi-Cauchy iff it is a subsequence of some quasi-Cauchy sequence. Also we define corresponding completeness, namely Bourbaki quasi-completeness (i.e., a space where every Bourbaki quasi-Cauchy sequence has a cluster point). Other main observations of this section are:\par
$(1)$ A space is Bourbaki quasi-precompact iff every sequence has a Bourbaki quasi-Cauchy subsequence.\par
$(2)$ A space is compact iff it is Bourbaki quasi-precompact and Bourbaki quasi-complete.

 Special attentions are also given to characterize subsets of the spaces $\ell^p (1\leq p<\infty)$ and $C(X),$ which are Bourbaki quasi-precompact in itself.\par
 The classes of continuous, Cauchy continuous and uniformly continuous functions are well known and there have been extensive investigations on conditions under which one of these notions coincide with another (see \cite{s1}). Naturally one should consider the class of ward continuous functions (i.e. those functions which preserve quasi-Cauchy sequences) and one can ask under which conditions ward continuity coincides with continuity or Cauchy continuity or uniform continuity. In Section 3, a wide class of equivalent conditions for a metric space are obtained where these well known classes of continuous functions individually coincide with ward continuous functions.

 Finally taking cue from \cite{Beer3} where a new class of Lipschitz functions, namely, Cauchy Lipschitz functions  were defined and investigated, in Section 4, we define quasi-Cauchy Lipschitz functions using quasi-Cauchy sequences in the place of Cauchy sequences. As a natural consequence we primarily compare them with other types of Lipschitz functions \cite{Beer1,Beer2,Beer3,g1} and again several coincidence results are proved along with a very interesting observation that every real valued ward continuous function defined on a metric space can be uniformly approximated by real valued quasi-Cauchy Lipschitz functions.\par
 Throughout $\mathbb{R}$ and $\mathbb{N}$ stand for the sets of all real numbers and natural numbers respectively and our topological terminologies and notations are as in the book \cite{e} from where the notions (undefined inside the article) can be found. All spaces in the sequel are metric spaces and all subsets of $\mathbb{R}$ and $\mathbb{R}^2$ are endowed with the usual metric.

\section{ Bourbaki quasi-precompact spaces}
\label{}
    As has already been mentioned, precompact or totally bounded metric spaces are those where every sequence has a Cauchy subsequence. Precompact spaces and its generalizations over the years have always played one of the most important roles in research (in particular compactness related investigations) in metric and uniform spaces as is evident from the vast literature. When quasi-Cauchy sequences come into the picture, it is tempting to define a notion of precompactness in the same way, only replacing Cauchy sequences by quasi-Cauchy sequences but it does not seem to be that much useful a notion because of the wilder nature of pre-Cauchy sequences. However we consider the notion of $\alpha$-boundedness, which was introduced and studied by Tashjain in the context of metric spaces in \cite{tash1} and for uniform spaces in \cite{tash2}. Then we investigate a nice connection in between the notion of $\alpha$-boundedness and quasi-Cauchy sequences, which helps to open a new direction of this concept. Further we show that $\alpha$-bounded spaces play an important role for quasi-Cauchy sequences as precompact spaces play for Cauchy sequences. We start our pursuit of this new analysis  with some notations and recalling some definitions.
\begin{Definition}\cite{bc}
	Let $(X,d)$ be a metric space. A sequence $(x_n)$ in $X$ is called quasi-Cauchy if for every $\varepsilon > 0$ there exists $n_0 \in \mathbb{N}$ such that $d(x_{n+1},x_n)< \varepsilon$ for all $n\geq n_0.$
\end{Definition}
\begin{Definition} (see \cite{caka} for example)
	A function $f:(X,d)\to (Y,\rho)$ is said to be ward continuous if for every quasi-Cauchy sequence $(x_n)$ in $X,$ $(f(x_n))$ is quasi-Cauchy in $Y.$
\end{Definition}
For a metric space $(X,d),$ denote by $B_{\varepsilon}(x),$ the open ball with centre $x\in X$ and radius $\varepsilon >0$ and for any subset $A$ of $X$ and $\varepsilon>0$ we will denote the $\varepsilon$-enlargement of $A$ by $A^{\varepsilon}=\cup \{B_{\varepsilon}(x):x\in A\}=\{y:d(y,A)<\varepsilon\}.$ Furthermore the $\varepsilon$-chainable component of $x\in X$ is defined by $B^{\infty}_{\varepsilon}(x)=\displaystyle{\bigcup_{n\in \mathbb{N}}}B^n_{\varepsilon}(x)$, where $B^1_{\varepsilon}(x)=B_{\varepsilon}(x)$ and for every $n\geq 2,$ $B^n_{\varepsilon}(x)=(B^{n-1}_{\varepsilon}(x))^{\varepsilon}.$
\begin{Definition}
	Let (X,d) be a metric space and $\varepsilon >0$ be given. Then an ordered set of points $\{x_0,x_1,...,x_n\}$ in $X$ satisfying $d(x_{i-1},x_i)<\varepsilon,$ where $i=1,2,...,n$ is said to be an $\varepsilon$-chain of length $n$ from $x_0$ to $x_n.$\par
	Note that $y\in B^n_{\varepsilon}(x)$ iff $x$ and $y$ can be joined by an $\varepsilon$-chain of length $n.$
\end{Definition}
\begin{Definition}\cite{a}
	$(1)$ A metric space $(X,d)$ is called $\varepsilon$-chainable if any two points of $X$ can be joined by an $\varepsilon$-chain, whereas $X$ is called chainable if $X$ is $\varepsilon$-chainable for every $\varepsilon >0.$\par
	$(2)$ A subset $B$ of a metric space $(X,d)$ is said to be Bourbaki bounded (also known as finitely chainable subset of $X$) if for every $\varepsilon >0$ there exist $m\in \mathbb{N}$ and a finite collection of points $p_1,p_2,...,p_k \in X$ such that $B\subseteq \displaystyle{\bigcup_{i=1}^k} B^m_{\varepsilon} (p_i).$
\end{Definition}
Now we consider the definition of $\alpha$-boundedness, a weaker version of precompactness, which we name as ``Bourbaki quasi-precompactness" as it would be clearer later (see Theorem \ref{b}) the important role played by quasi-Cauchy sequences in this notion.
\begin{Definition}
	Let $(X,d)$ be a metric space. Then $B\subseteq X$ is said to be a Bourbaki quasi-precompact subset of $X$ (or sometimes it is called Bourbaki quasi-precompact in $X$) if for every $\varepsilon >0$ there exists a finite collection of points $p_1,p_2,...,p_k \in X$ such that $B\subseteq \displaystyle{\bigcup_{i=1}^k} B^{\infty}_{\varepsilon} (p_i).$
\end{Definition}
 It is clear that totally bounded$\Rightarrow$ Bourbaki bounded$\Rightarrow$ Bourbaki quasi-precompact. But the converse implications are not generally true. Every chainable metric space is Bourbaki quasi-precompact, but it may not be Bourbaki bounded. On the other hand, it is clear that Bourbaki quasi-precompactness is uniform property and the family of Bourbaki quasi-precompact subsets forms a (closed) bornology. One must also keep in mind the subtle difference between ``Bourbaki quasi-precompact in $X$" and ``Bourbaki quasi-precompact in itself" depending on whether the points forming the chains are coming from the concerned subset itself or not. 
  For example note that every subset of a chainable metric space $X$ is Bourbaki quasi-precompact in $X$ but an infinite uniformly discrete subset of $X$ cannot be Bourbaki quasi-precompact in itself. Evidently every quasi-Cauchy sequence is Bourbaki quasi-precompact in itself. The notion of Bourbaki quasi-precompactness is independent with the notion of boundedness. The following examples show that even a bounded Bourbaki quasi-precompact set may not be Bourbaki bounded.
  \begin{Example}
  	The real line with the bounded metric $\widehat{d}=\min\{1,d_{usual}\}$ is bounded Bourbaki quasi-precompact, but not Bourbaki bounded.
  \end{Example}
In the following examples we consider bounded Bourbaki quasi-precompact subsets of $\ell^{\infty}$ and $C(X)$, which are not Bourbaki bounded.
\begin{Example}
	Consider $X_n=\{(1-\frac{k}{n+1})e_n+\frac{k}{n+1}e_{n+1}:k\in \mathbb{Z},0\leq k \leq n+1\},$ where $(e_n)$ is sequence of unit vectors of $\ell^{\infty}.$ Take $X=\displaystyle{\bigcup_{n\in \mathbb{N}}}X_n$ with sup norm of $\ell^{\infty}.$ By suitably arranging the terms of $X$ one can observe that $X$ is a quasi-Cauchy sequence and so Bourbaki quasi-precompact  in itself. Evidently $X$ is bounded. Note that $d(X_i,X_j)=\frac{1}{2}$ for all $i,j \in \mathbb{N}$ with $|i-j|\geq 2.$ So any $\frac{1}{4}$-chain joining $e_{2n}$ and $e_{2m}$ for $m> n$ must meet $X_{2n+1},X_{2n+2},...,X_{2m-1}.$ Consequently the length of this chain must be at least $2(m-n)-1$. Hence $X$ cannot be Bourbaki bounded.
\end{Example}
\begin{Example}
	Consider $X=\{\frac{1}{n}:n\in \mathbb{N}\}\cup \{0\}$ with usual metric of $\mathbb{R}.$ Let us define a function $f^n_k:X\to\mathbb{R}$ by $f^n_k(\frac{1}{n})=1-\frac{k}{n+1}$, $f^n_k(\frac{1}{n+1})=\frac{k}{n+1}$ and $f^n_k(x)=0$ otherwise, where $k\in\mathbb{Z},0\leq k \leq n+1$ $n\in \mathbb{N}$. Take $A=\{f_k^n:k\in\mathbb{Z},0\leq k\leq n+1$ and $n\in\mathbb{N}\}$ with sup norm of $C(X)$. Then proceeding as in the above example $A$ is a bounded subset of $C(X)$ which is Bourbaki quasi-precompact in itself but not Bourbaki bounded.
\end{Example}
Now we are going to present a sequential characterization of Bourbaki quasi-precompact subsets of a space. For that we introduce the notion of Bourbaki quasi-Cauchy sequences.
\begin{Definition}\cite{g2}
	Let $(X,d)$ be a metric space. A sequence $(x_n)$ is said to be Bourbaki-Cauchy in $X$ if for every $\varepsilon >0$ there exist $m\in \mathbb{N}$ and $n_0\in \mathbb{N}$ such that for some $p\in X$ we have $x_n\in  B^m_{\varepsilon}(p)$ for every $n\geq n_0.$
\end{Definition}
\begin{Definition}
Let $(X,d)$ be a metric space. A sequence $(x_n)$ is said to be Bourbaki quasi-Cauchy in $X$ if for every $\varepsilon >0$ there exists $n_0\in \mathbb{N}$ such that for some $p\in X$ we have $x_n\in  B^{\infty}_{\varepsilon}(p)$ for every $n\geq n_0.$
\end{Definition}
Clearly every subsequence of a Bourbaki quasi-Cauchy sequence is Bourbaki quasi-Cauchy in the underlying space. We will say that a sequence $(x_n)$ has a Bourbaki quasi-Cauchy subsequence in $X$ if $(x_n)$ has a subsequence which is Bourbaki quasi-Cauchy in $X.$ The reason behind inclusion of the term ``quasi-Cauchy" in Definition 2.7 can be understood from the following result.
\begin{Theorem}\label{a}
	Let $(X,d)$ be a metric space. A sequence $(x_n)$ is Bourbaki quasi-Cauchy in $X$ iff $(x_n)$ is a subsequence of some quasi-Cauchy sequence of $X.$
\end{Theorem}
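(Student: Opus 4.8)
My plan is to establish the two implications separately. The backward implication, that a subsequence of a quasi-Cauchy sequence must be Bourbaki quasi-Cauchy, is routine; the forward implication carries the content and needs an explicit interpolation construction. Before starting either, I would record the key structural fact that for each fixed $\varepsilon>0$ the relation of being joinable by an $\varepsilon$-chain is an equivalence relation on $X$ (reflexivity and symmetry are immediate and transitivity is concatenation of chains) whose equivalence classes are exactly the chainable components $B^{\infty}_{\varepsilon}(p)$. Consequently, two points lying in a common $B^{\infty}_{\varepsilon}(p)$ can always be joined directly by a single finite $\varepsilon$-chain.

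For the backward direction, suppose $x_n=y_{m_n}$ is a subsequence of a quasi-Cauchy sequence $(y_m)$, and fix $\varepsilon>0$. Quasi-Cauchyness gives $M$ with $d(y_{j+1},y_j)<\varepsilon$ for all $j\ge M$, and I would pick $n_0$ with $m_{n_0}\ge M$. Then for every $n\ge n_0$ the string $y_M,y_{M+1},\dots,y_{m_n}$ is an $\varepsilon$-chain, so $x_n=y_{m_n}\in B^{\infty}_{\varepsilon}(y_M)$; taking $p=y_M$ exhibits $(x_n)$ as Bourbaki quasi-Cauchy.

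For the forward direction I would interpolate. Assuming $(x_n)$ is Bourbaki quasi-Cauchy, fix the null sequence $\varepsilon_k=1/k$; for each $k$ the definition yields an index $N_k$ and a point $p_k$ with $x_n\in B^{\infty}_{\varepsilon_k}(p_k)$ for all $n\ge N_k$, and I may take the $N_k$ strictly increasing. By the preliminary observation, for $n\ge N_k$ the pair $x_n,x_{n+1}$ lies in one $\varepsilon_k$-chainable component and so is joinable by an $\varepsilon_k$-chain. Writing $j(n)=\max\{k:N_k\le n\}$ (so $j(n)\to\infty$), I would build $(y_m)$ by first listing $x_1,\dots,x_{N_1}$ and then, for each $n\ge N_1$ in turn, splicing in an $\varepsilon_{j(n)}$-chain from $x_n$ to $x_{n+1}$, appending its interior points followed by the endpoint $x_{n+1}$. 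The points $x_n$ then reappear in order, so $(x_n)$ is a subsequence of $(y_m)$. To see $(y_m)$ is quasi-Cauchy, note every successive pair beyond the initial block is a step of some spliced $\varepsilon_{j(n)}$-chain (the two chains meeting at $x_{n+1}$ share that point, so no extra gap appears at a junction), whence each late successive distance is at most $\varepsilon_{j(n)}=1/j(n)$; since $j(n)\to\infty$ as one moves out along $(y_m)$, these distances tend to $0$.

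The hard part is the bookkeeping in this construction: I must simultaneously ensure (i) that consecutive terms $x_n,x_{n+1}$ inhabit a common chainable component so that the splice exists, and (ii) that the step sizes of the concatenated chains decay to zero uniformly along $(y_m)$, even though the individual chains may have unbounded and uncontrolled lengths. Pinning down the equivalence-relation structure of the $B^{\infty}_{\varepsilon}(p)$ and indexing the spliced chains by the slowly increasing quantity $j(n)$ is precisely what reconciles these two demands.
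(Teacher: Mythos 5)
Your proof is correct and takes essentially the same route as the paper's: the backward direction chains together consecutive terms of the quasi-Cauchy sequence past the threshold index, and the forward direction fixes $\varepsilon_k = 1/k$, uses the chainable-component structure to join each consecutive pair $x_n, x_{n+1}$ (for $n$ beyond the $k$-th threshold) by a $1/k$-chain, and splices these chains together to produce the interpolating quasi-Cauchy sequence. Your write-up is somewhat more explicit than the paper's about why points in a common component $B^{\infty}_{\varepsilon}(p)$ can be joined by a single $\varepsilon$-chain (the equivalence-relation observation) and about the bookkeeping at chain junctions, both of which the paper leaves implicit.
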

\begin{proof}
Suppose that $(x_n)$ is a subsequence of a quasi-Cauchy sequence $(z_k).$ Let $\varepsilon >0$ be given. Then there exists $k_0\in \mathbb{N}$ such that $d(z_{k+1},z_k)< \varepsilon$ $\forall k \geq k_0$ which implies that $z_k\in B_{\varepsilon}^{\infty}(z_{k_0})$ for all $k\geq k_0.$ Consequently $x_n\in B^{\infty}_{\varepsilon}(z_{k_0})$ for all but finitely many $n$.\par
For the converse, let $(x_n)$ be a Bourbaki quasi-Cauchy sequence in $X.$ Now for each $k\in \mathbb{N}$ there exists $n_k$ such that $x_i\in B^{\infty}_{\frac{1}{k}}(x_{n_k})$ for all $i\geq n_k.$ Equivalently we can say that $x_{i+1}$ and $x_i$ can be joined by a $\frac{1}{k}$-chain for each $i\geq n_k$ i.e. there exists a finite collection of points $x_i=p_0^{i,k},p_1^{i,k},....,p_{r_i}^{i,k}=x_{i+1}$ with the property that $d(p^{i,k}_{j+1},p^{i,k}_j)<\frac{1}{k}$ for all $j=0,1,..,r_i.$ Now consider the sequence
\begin{center}
$\{x_1,..,x_{n_1},p_1^{n_1,1},p_2^{n_1,1},...,x_{n_1+1},....,x_{n_2},p_1^{n_2,\frac{1}{2}},...x_{n_k},p_1^{n_k,\frac{1}{k}},.......\}.$
\end{center}
Clearly the above sequence is the required quasi-Cauchy sequence.
\end{proof}
\begin{Corollary}
	Every subsequence of a quasi-Cauchy sequence $(x_n)$ is Bourbaki quasi-Cauchy in $(x_n)$.
\end{Corollary}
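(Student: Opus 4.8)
The plan is to read this corollary as the forward implication of Theorem \ref{a}, but executed inside the metric subspace $Y=\{x_n:n\in\mathbb{N}\}$ rather than an arbitrary ambient space. The only point requiring attention is the distinction, already flagged before the statement, between being Bourbaki quasi-Cauchy ``in $X$'' and ``in itself'': here the $\varepsilon$-chains witnessing the conclusion must use only points of the set $Y$ of terms of the original sequence, not external points of some larger space.

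First I would pass to $Y=\{x_n:n\in\mathbb{N}\}$ with the restriction of $d$. Since the metric on $Y$ is induced from that of $X$, the inequality $d(x_{n+1},x_n)<\varepsilon$ is unchanged, so $(x_n)$ remains quasi-Cauchy when regarded as a sequence lying entirely in $Y$. At this point one route is simply to invoke Theorem \ref{a} with $X$ replaced by $Y$: any subsequence of the quasi-Cauchy sequence $(x_n)$ of $Y$ is Bourbaki quasi-Cauchy in $Y$, which is exactly the assertion. To make the subspace claim fully transparent, however, I would also verify the definition directly. Fix $\varepsilon>0$ and choose $N\in\mathbb{N}$ with $d(x_{n+1},x_n)<\varepsilon$ for all $n\geq N$. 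Then for every $n\geq N$ the consecutive terms $x_N,x_{N+1},\dots,x_n$ form an $\varepsilon$-chain from $x_N$ to $x_n$ consisting entirely of points of $Y$, so $x_n\in B^{\infty}_{\varepsilon}(x_N)$ with the chainable component computed inside $Y$. Hence any subsequence $(x_{n_k})$ satisfies $x_{n_k}\in B^{\infty}_{\varepsilon}(x_N)$ for all $k$ with $n_k\geq N$, i.e. for all but finitely many $k$; taking $p=x_N\in Y$ gives precisely the defining condition for $(x_{n_k})$ to be Bourbaki quasi-Cauchy in $(x_n)$.

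The only genuinely delicate step is the observation that these witnessing chains can be taken within $Y$. This is exactly what upgrades the conclusion from ``Bourbaki quasi-Cauchy in $X$'' to the stronger ``in $(x_n)$.'' Since the natural $\varepsilon$-chain $x_N,x_{N+1},\dots$ is built from the terms of the sequence itself, no point outside $Y$ is ever needed, and the sharper statement follows with no additional effort.
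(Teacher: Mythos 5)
Your proposal is correct and matches the paper's intended justification: the corollary is immediate from the forward direction of Theorem \ref{a} once one notices that the witnessing $\varepsilon$-chains there are just consecutive terms $x_N, x_{N+1},\dots,x_n$ of the sequence itself, so they survive intact in the subspace $Y=\{x_n:n\in\mathbb{N}\}$. Your explicit verification inside $Y$ makes precisely this point, so the argument is the same as the paper's, only spelled out in more detail.
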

In the following, we present a sequential characterization of Bourbaki quasi-precompact sets in line of the classical result that a space is precompact iff every sequence has a Cauchy subsequence.
\begin{Theorem}\label{b}
	Let $(X,d)$ be a metric space. Then a non-void subset $A$ of $X$ is Bourbaki quasi-precompact in $X$ iff every sequence in $A$ has a Bourbaki quasi-Cauchy subsequence in $X.$
\end{Theorem}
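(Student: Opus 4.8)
The plan is to exploit the fact that, for each fixed $\varepsilon>0$, the relation ``$x$ and $y$ can be joined by an $\varepsilon$-chain'' is an equivalence relation on $X$: reflexivity holds since $x\in B_{\varepsilon}(x)$, symmetry follows by reversing a chain, and transitivity by concatenating two chains. Hence the sets $B^{\infty}_{\varepsilon}(p)$ (the $\varepsilon$-chainable components) are precisely the equivalence classes, and they partition $X$; any two of them are either equal or disjoint. I will also use the obvious monotonicity that if $\delta\leq\varepsilon$ then every $\delta$-chain is an $\varepsilon$-chain, so $B^{\infty}_{\delta}(p)\subseteq B^{\infty}_{\varepsilon}(p)$. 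With this picture in hand, Bourbaki quasi-precompactness of $A$ says exactly that for each $\varepsilon$ the set $A$ meets only finitely many $\varepsilon$-chainable components, while being Bourbaki quasi-Cauchy says that a sequence eventually lies in a single $\varepsilon$-chainable component for each $\varepsilon$. The theorem then becomes the natural analogue of the classical pigeonhole-plus-diagonal characterisation of precompactness.

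For the forward implication, I would assume $A$ is Bourbaki quasi-precompact and let $(x_n)$ be a sequence in $A$. For each $k\in\mathbb{N}$, cover $A$ by finitely many $\tfrac1k$-chainable components. By repeated application of the pigeonhole principle I would build a decreasing chain of infinite index sets $\mathbb{N}\supseteq N_1\supseteq N_2\supseteq\cdots$ such that all terms $x_n$ with $n\in N_k$ lie in one common $\tfrac1k$-chainable component $B^{\infty}_{1/k}(p_k)$. A diagonal extraction then yields indices $n_1<n_2<\cdots$ with $n_k\in N_k$; for $j\geq k$ one has $n_j\in N_k$, so $x_{n_j}\in B^{\infty}_{1/k}(p_k)$. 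Given any $\varepsilon>0$, choosing $k$ with $\tfrac1k\leq\varepsilon$ and invoking the monotonicity above gives $x_{n_j}\in B^{\infty}_{1/k}(p_k)\subseteq B^{\infty}_{\varepsilon}(p_k)$ for all $j\geq k$, so $(x_{n_k})$ is Bourbaki quasi-Cauchy in $X$.

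For the converse I would argue by contraposition. If $A$ is not Bourbaki quasi-precompact, fix $\varepsilon>0$ witnessing this failure, so that no finite union of $\varepsilon$-chainable components covers $A$. I would then construct a sequence $(x_n)$ in $A$ recursively: pick $x_1\in A$, and having chosen $x_1,\dots,x_n$, use $A\not\subseteq\bigcup_{i=1}^{n}B^{\infty}_{\varepsilon}(x_i)$ to select $x_{n+1}\in A\setminus\bigcup_{i=1}^{n}B^{\infty}_{\varepsilon}(x_i)$. By construction no two of the $x_i$ lie in a common $\varepsilon$-chainable component. Consequently no subsequence can be Bourbaki quasi-Cauchy: if $(x_{n_j})$ were, then for this very $\varepsilon$ some $B^{\infty}_{\varepsilon}(p)$ would contain all sufficiently large terms, in particular two distinct $x_{n_j}$, forcing them into the same component and contradicting their construction.

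Both implications are largely routine once the equivalence-class structure is isolated; the points I expect to require care are the convergence bookkeeping rather than any deep difficulty. In the forward direction the delicate step is the nested pigeonhole together with the diagonal choice, ensuring that the finer $\tfrac1k$-components sit inside the coarser $\varepsilon$-components so that a single diagonal sequence works for all $\varepsilon$ simultaneously. In the converse, the only thing to verify carefully is that the negation of Bourbaki quasi-precompactness genuinely permits the recursion to continue at every stage, i.e. that finitely many components never exhaust $A$, which is precisely what the chosen witness $\varepsilon$ guarantees.
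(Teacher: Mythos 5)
Your proof is correct and follows essentially the same route as the paper's: a nested pigeonhole argument with diagonal extraction over the components $B^{\infty}_{1/k}(p_k)$ for the forward direction, and a recursive selection of points of $A$ lying outside the $\varepsilon$-chainable components of all previously chosen points for the converse. Your explicit use of the equivalence-class (partition) structure of $\varepsilon$-chainable components is a welcome justification of the step the paper dismisses as ``obvious'' (that the constructed sequence admits no Bourbaki quasi-Cauchy subsequence), but it does not change the substance of the argument.
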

\begin{proof}
	Suppose that $\phi \neq A\subseteq X$ is Bourbaki quasi-precompact in $X$ and $(x_n)$ is a sequence in $A.$ Without any loss of generality let us assume that all $x_n$'s are distinct. Consider all $1$-enlargements $B^{\infty}_1(y)$ where $y\in X.$ Clearly there exists $y_1\in X$ such that $B_1^{\infty}(y_1)$ must contain infinitely many terms of $(x_n).$ Take $I_1=\{n:x_n\in B_1^{\infty}(y_1)\}.$ Similarly there exists $y_2\in X$ such that $B_{\frac{1}{2}}^{\infty}(y_2)$ contains infinite number of terms of $(x_n)_{n\in I_1},$ then take $I_2=I_1\cap \{n:x_n\in B_{\frac{1}{2}}^{\infty}(y_2)\}$ and so on. Continuing this process we obtain a decreasing sequence $(I_k)$ of infinite subsets of $\mathbb{N}$ where $I_{k+1}=I_k \cap \{n:x_n\in B_{\frac{1}{k+1}}^{\infty}(y_{k+1})\}$ for each $k\in \mathbb{N}.$ Choose an increasing sequence $(n_k)$ of natural numbers with $n_k\in I_k$. We claim that $(x_{n_k})$ is a Bourbaki quasi-Cauchy sequence in $X.$ This is true because given $\varepsilon >0,$ one can first choose $k_0\in \mathbb{N}$ such that $\frac{1}{k_0}< \varepsilon$ and then from the construction of $I_k$ it can be concluded that $x_{n_k}\in B^{\infty}_{\frac{1}{k_0}}(y_{k_0})$ for all $k\geq k_0.$\par
	Conversely, suppose on the contrary that the given condition holds, but $A$ is not Bourbaki quasi-precompact in $X.$ Then there exists an $\varepsilon >0$ for which we can choose $x_1,x_2\in A$ such that $x_2\notin B^{\infty}_{\varepsilon}(x_1).$ Again we can choose $x_3\in A$ such that $x_3\notin B^{\infty}_{\varepsilon}(x_1) \cup B^{\infty}_{\varepsilon}(x_2).$ Continuing in this process a sequence $(x_k)$ in $A$ is obtained which has the property that $x_{k+1}\notin\displaystyle{\bigcup_{i=1}^k} B^{\infty}_{\varepsilon}(x_i).$ Obviously $(x_n)$ has no Bourbaki quasi-Cauchy subsequence in $X$. This contradicts the given condition. Hence $A$ is Bourbaki quasi-precompact in $X.$
\end{proof}
\begin{Corollary}
	A metric space $X$ is Bourbaki quasi-precompact iff every sequence has a Bourbaki quasi-Cauchy subsequence.
\end{Corollary}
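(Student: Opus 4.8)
The plan is to deduce this directly from Theorem \ref{b} by specializing to the case $A = X$. The only point worth verifying is that the two phrasings in the corollary genuinely match the $A = X$ instance of the theorem. On the precompactness side, the unqualified notion ``$X$ is Bourbaki quasi-precompact'' means, by definition, that for every $\varepsilon > 0$ there is a finite set of centres $p_1, \dots, p_k \in X$ with $X \subseteq \bigcup_{i=1}^k B^{\infty}_{\varepsilon}(p_i)$; this is verbatim the assertion that $A = X$ is Bourbaki quasi-precompact in $X$. On the sequential side, ``every sequence has a Bourbaki quasi-Cauchy subsequence'' is exactly ``every sequence in $A = X$ has a Bourbaki quasi-Cauchy subsequence in $X$.''

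Thus I would simply invoke Theorem \ref{b} with $A = X$, which (since $X$ is non-void) yields the claimed equivalence immediately. There is essentially no obstacle here: the subtle distinction between ``Bourbaki quasi-precompact in $X$'' and ``Bourbaki quasi-precompact in itself'' that the authors emphasize for proper subsets collapses when $A = X$, because the admissible centres already range over all of $X$ and hence over all of $A$. Consequently the corollary is nothing more than Theorem \ref{b} read off at $A = X$.
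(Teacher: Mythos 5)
Your proof is correct and matches the paper's intent exactly: the corollary is stated without proof precisely because it is the $A = X$ instance of Theorem \ref{b}, and your verification that the two phrasings coincide (including the collapse of the ``in $X$'' versus ``in itself'' distinction when $A = X$) is the only point that needed checking.
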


\begin{Theorem}
	Let $(X,d), (Y,\rho)$ be two metric spaces and $f:(X,d)\to (Y,\rho)$ be a ward continuous function. Then for any Bourbaki quasi-precompact subset $A$ of $X$, $f(A)$ is Bourbaki quasi-precompact in $f(X).$
\end{Theorem}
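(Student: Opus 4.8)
The plan is to reduce everything to the sequential characterisation established in Theorem \ref{b} and then to transport sequences across $f$ by first upgrading a Bourbaki quasi-Cauchy sequence to a genuine quasi-Cauchy sequence via Theorem \ref{a}, on which ward continuity can actually be brought to bear. Concretely, to show that $f(A)$ is Bourbaki quasi-precompact in $f(X)$, I would apply Theorem \ref{b} with ambient space $f(X)$: it suffices to prove that every sequence in $f(A)$ admits a Bourbaki quasi-Cauchy subsequence in $f(X)$.

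First I would fix an arbitrary sequence $(y_n)$ in $f(A)$ and choose, for each $n$, a point $x_n\in A$ with $f(x_n)=y_n$, thereby producing a sequence $(x_n)$ in $A$. Since $A$ is Bourbaki quasi-precompact in $X$, Theorem \ref{b} yields a subsequence $(x_{n_k})$ that is Bourbaki quasi-Cauchy in $X$.

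The crucial step is to push this property through $f$. One cannot invoke ward continuity directly, because ward continuity is designed to preserve quasi-Cauchy sequences, not Bourbaki quasi-Cauchy ones. This is exactly where Theorem \ref{a} enters: it provides a quasi-Cauchy sequence $(z_m)$ of $X$ of which $(x_{n_k})$ is a subsequence. Now ward continuity applies cleanly and gives that $(f(z_m))$ is quasi-Cauchy in $Y$; moreover every term $f(z_m)$ lies in $f(X)$, so $(f(z_m))$ is a quasi-Cauchy sequence of the space $f(X)$ itself. Since $(f(x_{n_k}))=(y_{n_k})$ is a subsequence of $(f(z_m))$, the easy direction of the equivalence in Theorem \ref{a}, applied with ambient space $f(X)$, shows that $(y_{n_k})$ is Bourbaki quasi-Cauchy in $f(X)$. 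This is the desired subsequence, and a final appeal to Theorem \ref{b} completes the argument.

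I expect the only genuine subtlety to be the bookkeeping of ambient spaces: the conclusion demands Bourbaki quasi-Cauchyness relative to $f(X)$ rather than merely relative to $Y$, and this is legitimate precisely because the witnessing quasi-Cauchy sequence $(f(z_m))$ has all of its terms, and hence all of its chaining points, inside $f(X)$. The conceptual content, distilled into one line, is that a ward continuous function sends Bourbaki quasi-Cauchy sequences to Bourbaki quasi-Cauchy sequences, with Theorem \ref{a} serving as the bridge between the two Cauchy-type conditions.
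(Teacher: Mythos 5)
Your proposal is correct and is precisely the argument the paper intends: its own proof is the one-line remark that the result ``follows from Theorems \ref{a} and \ref{b} and the fact that ward continuous functions preserve quasi-Cauchy condition,'' which is exactly the chain you spell out (lift the sequence, extract a Bourbaki quasi-Cauchy subsequence via Theorem \ref{b}, upgrade to a quasi-Cauchy sequence via Theorem \ref{a}, push through $f$ by ward continuity, and descend back). Your attention to the ambient space $f(X)$ is a welcome filling-in of detail that the paper leaves implicit.
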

\begin{proof}
	The proof follows from Theorems \ref{a} and \ref{b} and the fact that ward continuous functions preserve quasi-Cauchy condition.
\end{proof}
\begin{Theorem}
	Let $(X,d)$ be a metric space. Then $A\subseteq X$ is Bourbaki quasi-precompact in $X$ iff $\overline{A}$ is Bourbaki quasi-precompact in $X.$
\end{Theorem}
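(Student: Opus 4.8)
The plan is to prove the two implications separately, with the reverse one being immediate and the forward one resting on a single topological observation about the $\varepsilon$-chainable components. For the easy direction, since $A\subseteq\overline{A}$, any finite family $\{B^{\infty}_{\varepsilon}(p_i)\}_{i=1}^{k}$ covering $\overline{A}$ automatically covers $A$; hence if $\overline{A}$ is Bourbaki quasi-precompact in $X$, so is $A$. This is nothing more than the monotonicity of the bornology already recorded after the definition, so I would dispatch it in a single line.

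For the forward direction I would first isolate and prove the key claim that for every $p\in X$ and every $\varepsilon>0$ the $\varepsilon$-chainable component $B^{\infty}_{\varepsilon}(p)$ is a closed subset of $X$. To see this, suppose $y\in\overline{B^{\infty}_{\varepsilon}(p)}$. Then the ball $B_{\varepsilon}(y)$ meets $B^{\infty}_{\varepsilon}(p)$, so there is a point $z$ with $d(y,z)<\varepsilon$ that is joined to $p$ by an $\varepsilon$-chain $p=q_0,q_1,\dots,q_n=z$. Appending $y$ as a terminal point is legitimate precisely because $d(q_n,y)=d(z,y)<\varepsilon$, so $p=q_0,q_1,\dots,q_n,y$ is again an $\varepsilon$-chain and therefore $y\in B^{\infty}_{\varepsilon}(p)$. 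This gives $\overline{B^{\infty}_{\varepsilon}(p)}\subseteq B^{\infty}_{\varepsilon}(p)$, as required.

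With the claim in hand the theorem follows at once. Given $\varepsilon>0$, the Bourbaki quasi-precompactness of $A$ supplies points $p_1,\dots,p_k\in X$ with $A\subseteq\bigcup_{i=1}^{k}B^{\infty}_{\varepsilon}(p_i)$. As a finite union of closed sets, $\bigcup_{i=1}^{k}B^{\infty}_{\varepsilon}(p_i)$ is itself closed and contains $A$, hence contains $\overline{A}$. Since $\varepsilon>0$ was arbitrary, this exhibits exactly the finite cover demanded by the definition, and so $\overline{A}$ is Bourbaki quasi-precompact in $X$.

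The only step carrying genuine content is the closedness of the chainable components, and even there the sole mechanism is the one-step extension of an $\varepsilon$-chain; I expect this to be the point to state carefully, whereas everything else is routine set manipulation. I note that one could avoid formally isolating the closedness claim and argue directly: for $y\in\overline{A}$ choose $a\in A$ with $d(y,a)<\varepsilon$, locate the index $i$ with $a\in B^{\infty}_{\varepsilon}(p_i)$, and extend the $\varepsilon$-chain from $p_i$ to $a$ by the single step to $y$. This is the same computation, so I would prefer the clopen-component formulation as it packages the extension argument once and makes the finite-union-is-closed conclusion transparent.
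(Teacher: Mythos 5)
Your proof is correct. The paper itself does not give a direct argument at all: its entire proof is the single line ``The proof easily follows by using Proposition 17 of \cite{g3}'', outsourcing the content to Garrido--Mero\~{n}o. Your route is therefore genuinely different in presentation: you make the argument self-contained by isolating the one fact that does the work, namely that each $\varepsilon$-chainable component $B^{\infty}_{\varepsilon}(p)$ is closed (via the one-step extension of an $\varepsilon$-chain to a point of the closure), and then observing that a finite union of closed sets containing $A$ must contain $\overline{A}$. This is almost certainly the same mechanism hiding behind the cited proposition, but your version has the advantage of being verifiable from the definitions in the paper alone, at the cost of a few extra lines; the paper's version buys brevity at the cost of sending the reader to an external reference. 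One cosmetic remark: you only need (and only prove) closedness of $B^{\infty}_{\varepsilon}(p)$, so calling it the ``clopen-component formulation'' slightly oversells what is used --- openness, while true and immediate, plays no role in the argument.
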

\begin{proof}
The proof easily follows by using Proposition 17 of \cite{g3}.
\end{proof}
Now we introduce a certain version of completeness and is an intermediate property between compactness and completeness.
\begin{Definition}
	Let $(X,d)$ be a metric space. Then $A\subseteq X$ is called Bourbaki quasi-complete subset of $X$ (or sometimes it is called Bourbaki quasi-complete in $X$) if every sequence in $A$ which is Bourbaki quasi-Cauchy in $X,$ has a cluster point in $A$. Whereas $A$ is called Bourbaki quasi-complete space if it is Bourbaki quasi-complete in itself.
\end{Definition}
\begin{Remark}
	Note that if $A\subseteq X$ is Bourbaki quasi-complete in $X$ then $A$ is also Bourbaki quasi-complete in itself as every Bourbaki quasi-Cauchy sequence in $A$ is also Bourbaki quasi-Cauchy in $X.$ But converse is not generally true. As an example let us take the sequence $(n)$ which is Bourbaki quasi-Cauchy in $\mathbb{R}$ but not in $\mathbb{N}.$
\end{Remark}
\begin{Example}
	$\mathbb{N}$ is a Bourbaki quasi-complete space but not compact and $\{\sqrt{n}:n\in \mathbb{N}\}$ is complete but not a Bourbaki quasi-complete space. Note that $\mathbb{N}$ is not Bourbaki quasi-complete in $\mathbb{R}.$
\end{Example}
Bourbaki quasi-completeness is stronger than completeness and Bourbaki quasi-precompactness is weaker than precompactness. Interestingly their combinations back to compactness. The next result is in that direction. We are now in a position to present a simple but interesting new characterization of compactness in metric structures.
\begin{Theorem}
	A metric space $X$ is compact iff it is Bourbaki quasi-precompact and Bourbaki quasi-complete.
\end{Theorem}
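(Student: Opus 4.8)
The plan is to reduce everything to the sequential characterization of compactness valid in metric spaces: $X$ is compact if and only if every sequence in $X$ has a cluster point (equivalently, a convergent subsequence). With this in hand, both implications become short assemblies of the machinery already developed, namely the corollary to Theorem \ref{b} (that $X$ is Bourbaki quasi-precompact iff every sequence admits a Bourbaki quasi-Cauchy subsequence) and the defining property of Bourbaki quasi-completeness.

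For the forward direction, suppose $X$ is compact. Bourbaki quasi-completeness is then immediate: any Bourbaki quasi-Cauchy sequence is in particular a sequence in $X$, hence by sequential compactness it has a convergent subsequence and thus a cluster point in $X$. For Bourbaki quasi-precompactness I would take an arbitrary sequence, extract a convergent subsequence by sequential compactness, and then observe the chain of implications convergent $\Rightarrow$ Cauchy $\Rightarrow$ quasi-Cauchy $\Rightarrow$ Bourbaki quasi-Cauchy (the last step by Theorem \ref{a}, since a quasi-Cauchy sequence is trivially a subsequence of itself). Thus every sequence has a Bourbaki quasi-Cauchy subsequence, and the corollary to Theorem \ref{b} yields Bourbaki quasi-precompactness.

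For the converse, assume $X$ is both Bourbaki quasi-precompact and Bourbaki quasi-complete, and let $(x_n)$ be an arbitrary sequence in $X$. By Bourbaki quasi-precompactness and the sequential characterization, $(x_n)$ has a subsequence $(x_{n_k})$ that is Bourbaki quasi-Cauchy in $X$. By Bourbaki quasi-completeness this subsequence has a cluster point $x \in X$; since a cluster point of a subsequence is a cluster point of the original sequence, $(x_n)$ clusters at $x$. As $(x_n)$ was arbitrary, every sequence in $X$ has a cluster point, so $X$ is sequentially compact and therefore compact.

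I do not anticipate a genuine obstacle here; the content is bookkeeping on top of Theorem \ref{a} and the corollary to Theorem \ref{b}. The only points warranting care are the elementary verification that convergent sequences are quasi-Cauchy (so that the forward direction correctly lands inside the Bourbaki quasi-Cauchy class) and the remark that passing to a cluster point of a subsequence suffices, since these are exactly the places where one might otherwise be tempted to re-prove the equivalence of compactness and sequential compactness from scratch.
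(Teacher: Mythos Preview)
Your proposal is correct and follows essentially the same route as the paper: the converse is identical (extract a Bourbaki quasi-Cauchy subsequence via Theorem~\ref{b}, then apply Bourbaki quasi-completeness to obtain a cluster point), and your forward direction simply spells out in detail what the paper dismisses in one line as ``classical observations and Theorem~\ref{b}.'' No substantive difference.
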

\begin{proof}
The necessity of the conditions follow from classical observations and Theorem \ref{b}. For the converse part, let $(x_n)$ be a sequence in $X.$ From Theorem \ref{b}, $(x_n)$ has a Bourbaki quasi-Cauchy subsequence and so it has a cluster point by the given condition. Hence $X$ is compact.
\end{proof}
For a subset of $\mathbb{R}$, we can actually characterize compactness with the help of Bourbaki quasi-precompactness along with a weaker condition, known as weakly G-completeness \cite{vg}.
\begin{Theorem}
	Let $X\subseteq \mathbb{R}$ be endowed with the usual metric of $\mathbb{R}.$ Then $X$ is compact iff it is Bourbaki quasi-precompact in itself and every quasi-Cauchy sequence in $X$ has a cluster point in $X$.
\end{Theorem}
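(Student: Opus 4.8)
The plan is to reduce everything to the Heine--Borel theorem: since $X\subseteq\mathbb{R}$, it is compact exactly when it is closed in $\mathbb{R}$ and bounded, so I would derive each of these from the two hypotheses. The necessity of the conditions is routine and I would dispose of it first. A compact metric space is sequentially compact, so every sequence, in particular every quasi-Cauchy sequence, has a cluster point in $X$, which is weak $G$-completeness; and compactness gives, for each $\varepsilon>0$, a finite cover of $X$ by balls $B_\varepsilon(p_i)$ with $p_i\in X$, whence $X\subseteq\bigcup_i B^\infty_\varepsilon(p_i)$, so $X$ is Bourbaki quasi-precompact in itself. For the converse, closedness is cheap: if $X$ were not closed in $\mathbb{R}$, take $p\in\overline{X}\setminus X$ and a sequence in $X$ converging to $p$; being convergent it is quasi-Cauchy, yet its only cluster point is $p\notin X$, contradicting weak $G$-completeness. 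Hence $X$ is closed.

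The heart of the matter, and the step I expect to be the main obstacle, is boundedness, for which Bourbaki quasi-precompactness in itself is used crucially. I would argue by contradiction, assuming $X$ unbounded above. For each $k$, Bourbaki quasi-precompactness in itself forces $X$ to be covered by finitely many $\tfrac1k$-chainable components; since a finite union of sets bounded above cannot be unbounded above, at least one such component is unbounded above, and a short crossing argument (two distinct $\tfrac1k$-components both unbounded above would contain points straddling each other within $\tfrac1k$, forcing them to coincide) shows it is unique, say $C_k$. As a $\tfrac{1}{k+1}$-chain is a fortiori a $\tfrac1k$-chain, the components refine with $k$, so one obtains a nested family $C_1\supseteq C_2\supseteq\cdots$, each unbounded above.

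Now I would build a single quasi-Cauchy sequence escaping to $+\infty$. Choose $w_k\in C_k$ with $w_1<w_2<\cdots\to\infty$; since $w_{k+1}\in C_{k+1}\subseteq C_k$, the points $w_k,w_{k+1}$ lie in the common $\tfrac1k$-chainable component $C_k$ and so are joined by a $\tfrac1k$-chain. The delicate point is that such a chain may wander far below $w_k$, so the naive concatenation need not diverge. To remedy this I would first prove a monotonization lemma special to $\mathbb{R}$: any $\tfrac1k$-chain from $a$ to $b$ with $a<b$ can be replaced by a non-decreasing $\tfrac1k$-chain from $a$ to $b$ inside $[a,b]$, by first trimming the excursions outside $[a,b]$ (the exit and re-entry points of an excursion both lie within $\tfrac1k$ of the nearer endpoint, so they can be spliced by a single admissible step) and then passing to the running maxima.

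Concatenating these monotone chains from $w_k$ to $w_{k+1}$ for $k=1,2,\dots$ then yields a non-decreasing sequence in $X$ whose steps in the $k$-th block are below $\tfrac1k$, hence a quasi-Cauchy sequence, and it passes through every $w_k$, so it diverges to $+\infty$ and has no cluster point in $X$. This contradicts weak $G$-completeness, so $X$ is bounded; together with closedness, Heine--Borel finishes the proof. It is precisely the use of the order structure of $\mathbb{R}$ in the monotonization step that confines this sharpened characterization, with the weaker completeness hypothesis, to subsets of the real line.
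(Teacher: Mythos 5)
Your proof is correct, and it reaches the crucial contradiction object --- an increasing, unbounded, quasi-Cauchy sequence in $X$ --- by a genuinely different route from the paper's. In the sufficiency part the paper never works with the covering definition: it takes an escaping sequence $(x_n)$ with $x_{n+1}>x_n+1$, invokes its sequential characterization of Bourbaki quasi-precompactness (Theorem \ref{b}) to get a Bourbaki quasi-Cauchy subsequence, then Theorem \ref{a} to realize it as a subsequence $(z_{r_n})$ of a single quasi-Cauchy sequence $(z_n)$ in $X$, and finally sorts, for each $n$, the terms $z_i$ with $r_n<i<r_{n+1}$ lying in $(z_{r_n},z_{r_{n+1}})$ into increasing order; concatenating these sorted blocks yields the monotone unbounded quasi-Cauchy sequence. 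You instead argue straight from the covering definition: finitely many $\frac{1}{k}$-chainable components, a uniqueness lemma (via the crossing argument) showing only one of them, $C_k$, can be unbounded above, the nesting $C_{k+1}\subseteq C_k$, and then explicit monotonization (trimming excursions, then running maxima) of the $\frac{1}{k}$-chains joining chosen points $w_k\to\infty$. Both proofs ultimately rest on the same order-theoretic fact, that a path in $\mathbb{R}$ with steps below $\delta$ crossing an interval leaves behind points whose increasing rearrangement still has gaps below $\delta$; they just package it differently. The paper's version is shorter because Theorems \ref{a} and \ref{b} absorb the combinatorics, and it needs no uniqueness lemma since connectivity comes for free from the ambient sequence $(z_n)$; in exchange, the key verification is compressed into ``observe that it still remains a quasi-Cauchy sequence''. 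Your version is self-contained, and the trim-and-running-maxima lemma makes that verification explicit; the price is the extra component analysis (uniqueness and nestedness of the $C_k$), which the paper's sequential formulation sidesteps. Both arguments correctly isolate the order structure of $\mathbb{R}$ as the reason this sharpened characterization is special to subsets of the real line.
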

\begin{proof}
	One part is obvious. Conversely suppose that $X$ satisfies the mentioned property and it is Bourbaki quasi-precompact in itself. As closedness of $X$ in $\mathbb{R}$ is quite obvious, one only needs to prove that $X$ is bounded. Suppose on the contrary that $X$ is not bounded. Then we can choose a sequence $(x_n)$ which satisfies the property that $x_1 > 1$ and $x_{n+1} > x_n+1$ for all $n\in \mathbb{N}.$ Now as $X$ is a Bourbaki quasi-precompact space,  by passing to a subsequence we can conclude that $(x_n)$ is Bourbaki quasi-Cauchy in $X.$ So there is a quasi-Cauchy sequence $(z_n)$ in $X$ such that $z_{r_n}=x_n$ for some subsequence $(z_{r_n})$ of $(z_n).$ Now consider $G_n=\{z_i: r_n<i <r_{n+1}\ and\ z_i\in (z_{r_n},z_{r_{n+1}})\}$ for each $n\in \mathbb{N}.$ Clearly each $G_n$ is non-empty for all but finitely many $n$. We can write $G_n$ as $\{z_1^n < z_2^n <....< z_{p_n}^n\}.$ Now consider the following increasing sequence
	\begin{center}
		$\{z_{r_1},z^1_1,z^1_2,...,z^1_{p_1},z_{r_2},z^2_1,...,z_{r_3},...,z_{r_n},z^n_1,...,z^n_{p_n},z_{r_{n+1}},.....\}.$
	\end{center}
Observe that it still remains a quasi-Cauchy sequence in $X,$ but it has no cluster point. This contradicts the given condition. Hence $X$ must be bounded and consequently $X$ must be compact.
\end{proof}
 However the following example shows that the above characterization is not generally true for an arbitrary metric space. Also it shows that Bourbaki quasi-completeness is strictly stronger than weakly G-completeness.
\begin{Example}
	Consider $X=\{re_n:n\in \mathbb{N},r\in [0,1]\},$ where $\{e_n:n\in \mathbb{N}\}$ is the set of all unit vectors of $\ell^{\infty}$, endowed with sup norm of $\ell^{\infty}.$ $X$ is a Bourbaki quasi-precompact space as it is chainable. Let $(x_n)$ be a quasi-Cauchy sequence in $X.$ Without any loss of generality we can assume that $x_n$'s are distinct. Choose $X_k=\{re_k:r\in [0,1]\}.$ If $(x_n)$ intersects only finitely many of $X_k,$ then from the compactness of $X_k$'s it follows that $(x_n)$ has a cluster point. Now if $(x_n)$ intersects infinitely many of $X_k$, then we can choose a subsequence $(x_{n_p})$ of $(x_n)$ and a sub collection $\{X_{k_p}:p\in \mathbb{N}\}$ of $\{X_k:k\in \mathbb{N}\}$ such that $x_{n_p}\in X_{k_p}$ but $x_{n_p+1}\notin X_{k_p}.$ Let $r_p$ and $s_p$ be the non-zero terms of $x_{n_p}$ and $x_{n_p+1}$ respectively. Then we have $||x_{n_p+1}-x_{n_p}||_{\infty}=\max\{s_p,r_p\}\to 0.$ Hence $x_{n_p}\to 0$. But $X$ is not compact. Note that $(e_n)$ is Bourbaki quasi-Cauchy in $X$ without any cluster point, in $X$.
\end{Example}
 There is a nice characterization of precompact subsets of a metric space $X$ making use of uniformly discrete subsets in \cite{Ulydis} as follows ``A space $X$ is precompact iff every uniformly discrete subset is finite". We will now establish such an analogous result for Bourbaki quasi-precompact spaces. In order to serve our purpose we introduce the following versions of discrete space and uniformly discrete space.
 \begin{Definition}
 	Let $(X,d)$ be a metric space.\par
 	$(1)$ $A\subseteq X$ is called chain discrete in $X$ if for each $x\in A$ there exists $\delta_x >0$ such that $B^{\infty}_{\delta_x}(x)\cap [A\setminus\{x\}] = \phi.$\par
 	$(2)$ $A\subseteq X$ is called uniformly chain discrete in $X$ if there exists $\delta >0$ such that $B^{\infty}_{\delta}(x)\cap [A\setminus\{x\}] = \phi$ for all $x\in A.$
 \end{Definition}
Clearly chain discrete $\Rightarrow$ discrete and uniformly chain discrete $\Rightarrow$ uniformly discrete. The converse implications are not generally true as $\mathbb{N}$ is discrete but not chain discrete in $\mathbb{R}.$ But it is uniformly chain discrete in itself.

\begin{Theorem}
	A metric space $X$ is Bourbaki quasi-precompact iff every uniformly chain discrete subset in $X$ is finite.
\end{Theorem}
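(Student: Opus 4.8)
The plan is to exploit the fact that the relation ``$x$ and $y$ can be joined by an $\varepsilon$-chain'' is an equivalence relation on $X$, whose equivalence classes are precisely the $\varepsilon$-chainable components $B^{\infty}_{\varepsilon}(x)$. (Reflexivity is clear since $x\in B_{\varepsilon}(x)$; symmetry follows by reversing a chain; transitivity by concatenating two chains.) Consequently the sets $B^{\infty}_{\varepsilon}(x)$, $x\in X$, partition $X$, and therefore $X$ is Bourbaki quasi-precompact if and only if for every $\varepsilon>0$ there are only finitely many $\varepsilon$-chainable components. This reformulation is the engine behind both implications.

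For the necessity, I would assume $X$ is Bourbaki quasi-precompact and let $A$ be a uniformly chain discrete subset, witnessed by some $\delta>0$, so that distinct points of $A$ lie in distinct $\delta$-chainable components. By the reformulation above there are only finitely many $\delta$-chainable components, say $k$ of them; since $A$ meets each such component in at most one point, $|A|\leq k$, and hence $A$ is finite.

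For the sufficiency, I would argue contrapositively, mimicking the converse construction in Theorem \ref{b}. If $X$ is not Bourbaki quasi-precompact, then I can fix $\varepsilon>0$ for which no finite union $\bigcup_{i=1}^{n} B^{\infty}_{\varepsilon}(x_i)$ covers $X$, and inductively choose $x_{n+1}\notin \bigcup_{i=1}^{n} B^{\infty}_{\varepsilon}(x_i)$. Using symmetry of the $\varepsilon$-chain relation, no two of the chosen points can lie in a common $\varepsilon$-chainable component, so $A=\{x_n:n\in\mathbb{N}\}$ is an infinite uniformly chain discrete subset (with $\delta=\varepsilon$), contradicting the hypothesis.

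The only genuinely delicate point, and the one I would be careful to spell out, is the equivalence-relation structure of the $\varepsilon$-chain relation---particularly its symmetry---which guarantees that the set $A$ produced in the sufficiency argument really is uniformly chain discrete rather than merely ``forward'' separated; once this is in hand, everything else reduces to a direct partition/counting argument.
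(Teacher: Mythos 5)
Your proposal is correct, and both implications go through: the $\varepsilon$-chain relation is indeed an equivalence relation whose classes are the sets $B^{\infty}_{\varepsilon}(x)$, so Bourbaki quasi-precompactness of $X$ is equivalent to there being only finitely many $\varepsilon$-chainable components for each $\varepsilon>0$; your counting argument for necessity and your greedy construction for sufficiency are both valid. The route differs from the paper's in a meaningful way: the paper channels both directions through the sequential characterization of Theorem \ref{b} (a space is Bourbaki quasi-precompact iff every sequence has a Bourbaki quasi-Cauchy subsequence) --- for necessity it says an infinite uniformly chain discrete set gives a sequence with $x_i\notin B^{\infty}_{\delta}(x_j)$ for $i\neq j$, which ``obviously'' contradicts quasi-precompactness, and for sufficiency it shows any sequence without a Bourbaki quasi-Cauchy subsequence yields such a set. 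Your argument is instead self-contained, working straight from the covering definition via the partition into components; in particular it makes explicit exactly the point the paper elides, namely that symmetry and transitivity of the chain relation are what convert ``two points lie in a common $B^{\infty}_{\delta}(p_i)$'' into ``$x_i\in B^{\infty}_{\delta}(x_j)$'', which is the hidden content of the paper's ``obviously''. What the paper's approach buys is brevity and reuse of machinery already established (and your sufficiency construction is in any case the same greedy selection used in the converse half of Theorem \ref{b}); what yours buys is independence from that theorem and a cleaner, fully spelled-out justification of the necessity direction.
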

\begin{proof}
	Let $X$ be Bourbaki quasi-precompact. On the contrary suppose that there is an infinite uniformly chain discrete subset $A$ of $X$. Then there exist a sequence $(x_n)$ in $A$ and $\delta>0$ such that $x_i\notin B^{\infty}_{\delta}(x_j)$ for all $i\neq j.$ Obviously this contradicts that $X$ is Bourbaki quasi-precompact. Hence $A$ must be finite.\par
	Conversely, let $(x_n)$ be a sequence in $X.$ Without any loss of generality we can assume that $x_n$'s are distinct. If $(x_n)$ has no Bourbaki quasi-Cauchy subsequence then there exists an $\varepsilon >0$ and passing to a subsequence we can conclude that $x_i\notin B^{\infty}_{\varepsilon}(x_j)$ whenever $i\neq j.$ Hence $(x_n)$ is an infinite uniformly chain discrete subset of $X,$ which contradicts the given assumption.
\end{proof}
We end this section with two results  presenting typical characterizations of subsets of $C(X)$ and $\ell^p$ $(1\leq p < \infty),$ which are Bourbaki quasi-precompact in itself.

The following notion ``equi-chain continuity"  (an weaker version of the notion of equi-continuity) will come handy for the aforementioned  characterization in $C(X).$
\begin{Definition}
	Let $(X,d)$ be a metric space. $A\subseteq C(X)$ is called equi-chain continuous if for each $\varepsilon >0$ and for every $x\in X$ there is $\delta_x >0$ such that for each $f\in A$ there exists $g_f^{\varepsilon}\in A$, which satisfies the property that $d(x,y)< \delta_x \Rightarrow |g^{\varepsilon}_f(x)-g_f^{\varepsilon}(y)| < \varepsilon$ and furthermore $f, g_f^{\varepsilon}$ can be joined by an $\varepsilon$-chain in $A.$
\end{Definition}
Clearly equi-continuity implies equi-chain continuity. But the converse is not generally true. The following example illustrates this fact.
\begin{Example}
	Define $f_n:[0,1]\to [0,1]$ by $f_n(x)=nx$ if $x\leq \frac{1}{n}$ and $f_n(x)=1$ if $x\geq \frac{1}{n}.$ Clearly $f_n\in C([0,1])$ for each $n\in \mathbb{N}.$ Consider $A=\{f_n:n\in \mathbb{N}\}$ with sup norm of $C([0,1]).$ It is easy to check that $(f_n)$ is a quasi Cauchy sequence in $C([0,1]).$ Clearly $A$ is equi-chain continuous. But $A$ is not equi-continuous at $x=0.$ To see that, take $\varepsilon=\frac{1}{2}$ and observe that for each $n\in \mathbb{N}$, $|f_n(\frac{1}{n})-f_n(0)|=1>\frac{1}{2}.$ Furthermore $A$ is not precompact as $(f_{2^k})_{k\in \mathbb{N}}$ has no Cauchy subsequence.
\end{Example}
\begin{Theorem}
	Let $(X,d)$ be a compact metric space and $A$ be a bounded subset of $C(X)$ with sup norm. Then $A$ is Bourbaki quasi-precompact in itself iff $A$ is equi-chain continuous.
\end{Theorem}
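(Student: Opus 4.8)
The plan is to prove both implications directly from the covering definition of Bourbaki quasi-precompactness in itself, namely that for every $\varepsilon>0$ there are finitely many $p_1,\dots,p_k\in A$ with $A\subseteq\bigcup_{i=1}^k B^{\infty}_{\varepsilon}(p_i)$, where all the chains run inside $A$. The bridge between this chain-covering condition and equi-chain continuity is that membership of $f$ in $B^{\infty}_{\varepsilon}(g)$ is exactly the existence of an $\varepsilon$-chain in $A$ joining $f$ to $g$, and that continuous functions on the compact space $X$ carry modulus-of-continuity control that can be made uniform over a finite family. Compactness of $X$ and boundedness of $A$ will enter through a finite-grid (Arzel\`a--Ascoli-type) estimate.

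For the ``only if'' part I would fix $\varepsilon>0$ and $x\in X$ and invoke Bourbaki quasi-precompactness in itself to obtain finitely many centres $p_1,\dots,p_k\in A$ with $A\subseteq\bigcup_{i=1}^k B^{\infty}_{\varepsilon}(p_i)$. Since each $p_i$ is continuous at $x$ and there are only finitely many of them, there is a single $\delta_x>0$ with $d(x,y)<\delta_x\Rightarrow|p_i(x)-p_i(y)|<\varepsilon$ for every $i$. Given any $f\in A$, I would choose $i$ with $f\in B^{\infty}_{\varepsilon}(p_i)$, so that $f$ and $p_i$ are joined by an $\varepsilon$-chain in $A$, and set $g_f^{\varepsilon}=p_i$. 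Both requirements in the definition of equi-chain continuity are then met, which finishes this direction.

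For the ``if'' part I would fix $\varepsilon>0$ and work with the family $G=\{g_f^{\varepsilon/3}:f\in A\}\subseteq A$ supplied by equi-chain continuity at scale $\varepsilon/3$. The pointwise oscillation bounds, together with compactness of $X$ (cover $X$ by finitely many balls $B_{\delta_{x_i}}(x_i)$), yield a single $\delta>0$ such that $d(x,y)<\delta\Rightarrow|g(x)-g(y)|<\varepsilon/3$ for all $g\in G$. Choosing a finite $\delta$-net $x_1,\dots,x_m$ of $X$ and, using boundedness of $A$, partitioning the bounded range of $g\mapsto(g(x_1),\dots,g(x_m))\in\mathbb{R}^m$ into finitely many cubes of side $<\varepsilon/3$, the standard grid estimate shows that any two $g,g'\in G$ whose value vectors lie in the same cube satisfy $\|g-g'\|_{\infty}<\varepsilon$. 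Picking one representative $g^{(1)},\dots,g^{(N)}\in G\subseteq A$ per occupied cube, every $g\in G$ lies within sup-distance $\varepsilon$ of some $g^{(j)}$. Finally, for arbitrary $f\in A$ I would concatenate the $\varepsilon/3$-chain in $A$ joining $f$ to $g_f^{\varepsilon/3}\in G$ with the single $\varepsilon$-step from $g_f^{\varepsilon/3}$ to the nearby $g^{(j)}$; since every vertex stays in $A$, this exhibits $f\in B^{\infty}_{\varepsilon}(g^{(j)})$, so that $A\subseteq\bigcup_{j=1}^N B^{\infty}_{\varepsilon}(g^{(j)})$ with centres in $A$, i.e. $A$ is Bourbaki quasi-precompact in itself.

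The main obstacle is that equi-chain continuity supplies oscillation control only at the single tolerance tied to $\varepsilon$, and with a family of representatives $\{g_f^{\varepsilon/3}\}$ that itself changes with $\varepsilon$; hence one cannot simply quote Arzel\`a--Ascoli to declare $G$ precompact. The real work is to reprove, by hand and at the fixed scale $\varepsilon$, the finite-net (total-boundedness) conclusion using only the single-scale modulus together with boundedness, and then to keep careful track that every point appearing in the concatenated chains lies in $A$, so that the stronger ``in itself'' form of Bourbaki quasi-precompactness is obtained rather than merely the ``in $C(X)$'' form.
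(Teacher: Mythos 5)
Your proposal is correct and essentially reproduces the paper's proof: the forward direction is identical (one modulus $\delta_x$ works for the finitely many chain centres, which then serve as the $g_f^{\varepsilon}$), and your backward direction — sample the selected family $G=\{g_f^{\varepsilon/3}\}$ at a finite net, use boundedness of $A$ to get finitely many occupied cells in $\mathbb{R}^m$, then concatenate the $\varepsilon/3$-chain from $f$ to $g_f^{\varepsilon/3}$ with one final step of size $<\varepsilon$, all inside $A$ — is the paper's argument with a cube partition in place of its $\varepsilon/6$-balls around the points $\phi(f_j)$. The only slip is a constant: pointwise equicontinuity plus compactness yields uniform oscillation $<2\varepsilon/3$ rather than $<\varepsilon/3$ (the Lebesgue-number argument doubles the bound), which you can repair either by invoking equi-chain continuity at scale $\varepsilon/6$, or by skipping the uniformization entirely and, as the paper does, measuring oscillation only against the centres $x_i$ of the finite subcover $\{B_{\delta_{x_i}}(x_i)\}$, where the pointwise bound $|g(x)-g(x_i)|<\varepsilon/3$ applies directly.
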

\begin{proof}
	First let $A\subseteq C(X)$ be Bourbaki quasi-precompact in itself and let $\varepsilon >0$ be given. Then there exist $f_1,f_2,...,f_k \in A$ such that $A\subseteq \displaystyle{\bigcup_{i=1}^k} B^{\infty}_{\varepsilon}(f_i)$ in $A.$ Now for each $x\in X$ there exists $\delta_x^i > 0$ such that $d(x,y)< \delta_x^i \Rightarrow |f_i(x)-f_i(y)|< \varepsilon$ for each $i=1,2,..,k.$ Take $\delta_x=\min\{\delta_x^i:i=1,2,...,k\}.$ Consequently we can conclude that $d(x,y)< \delta_x \Rightarrow |f_i(x)-f_i(y)|< \varepsilon$ for each $i=1,2,..,k$ and any $f\in A$ can be joined to some $f_i$ by an $\varepsilon$-chain in $A.$  This shows that $A$ is equi-chain continuous.\par
	Conversely, let $A$ be equi-chain continuous and let $\varepsilon >0$ be given. From the  equi-chain continuity condition, for each $x\in X$ one can find a $\delta_x >0$ such that for each $f\in A$ there exists $g_f^{\frac{\varepsilon}{3}}\in A$ which satisfies the property that $d(x,y)< \delta_x \Rightarrow |g^{\frac{\varepsilon}{3}}_f(x)-g_f^{\frac{\varepsilon}{3}}(y)| < \frac{\varepsilon}{3}$ and moreover $f, g_f^{\frac{\varepsilon}{3}}$ can be joined by an $\frac{\varepsilon}{3}$-chain in $A.$ As $X$ is compact, we can conclude that $\{B_{\delta_x}(x):x\in X\}$ has a finite subcover. Suppose that $X\subseteq \displaystyle{\bigcup_{i=1}^k} B_{\delta_{x_i}}(x_i).$ Now define a map $\phi:A\to \mathbb{R}^k$ by $\phi(f)=(g^{\frac{\varepsilon}{3}}_f(x_1),g^{\frac{\varepsilon}{3}}_f(x_2),....,g^{\frac{\varepsilon}{3}}_f(x_k)).$ To make $\phi$ well defined we select one $g^{\frac{\varepsilon}{3}}_f$ for each $f.$ As $A$ is bounded, $\phi(A)$ is precompact and so there are $f_1,f_2,...,f_m \in A$ such that $\phi(A)\subseteq \displaystyle{\bigcup_{j=1}^m} B_{\frac{\varepsilon}{6}}(\phi(f_j))$ in $\mathbb{R}^k.$ Write $B_{\frac{\varepsilon}{6}}(\phi(f_j))=U_j.$ We claim that for any $f,h \in \phi^{-1}(U_j)$ where $j=1,...,m,$ $f$ and $h$ can be joined by an $\varepsilon$-chain in $A.$ Note that $f,h \in \phi^{-1}(U_j)\Rightarrow |g^{\frac{\varepsilon}{3}}_f(x_i)- g^{\frac{\varepsilon}{3}}_h(x_i)|< \frac{\varepsilon}{6}+\frac{\varepsilon}{6} =\frac{\varepsilon}{3}$ for every $i=1,...,k.$ Now for any $x\in X$ there exists $x_i$ such that $x\in B_{\delta_{x_i}}(x_i).$ Consequently it follows that $|g^{\frac{\varepsilon}{3}}_f(x)- g^{\frac{\varepsilon}{3}}_h(x)| \leq |g^{\frac{\varepsilon}{3}}_f(x)-g^{\frac{\varepsilon}{3}}_f(x_i)|+|g^{\frac{\varepsilon}{3}}_f(x_i)-g^{\frac{\varepsilon}{3}}_h(x_i)|+|g^{\frac{\varepsilon}{3}}_h(x_i)-g^{\frac{\varepsilon}{3}}_h(x)|< \frac{\varepsilon}{3}+\frac{\varepsilon}{3}+\frac{\varepsilon}{3}=\varepsilon.$ Hence $||g^{\frac{\varepsilon}{3}}_f - g^{\frac{\varepsilon}{3}}_h||< \varepsilon.$ Moreover $f$ and $h$ can be joined by an $\frac{\varepsilon}{3}$-chain in $A$ with $g^{\frac{\varepsilon}{3}}_f$ and $g^{\frac{\varepsilon}{3}}_h$ respectively. Now it is easy to check that $A\subseteq \displaystyle{\bigcup_{j=1}^m} \phi^{-1}(U_j).$ Hence $A$ is Bourbaki quasi-precompact in itself.
\end{proof}
We end with the following result for the space $\ell^p$.
\begin{Theorem}
	Let $A$ be a bounded subset of $\ell^p(1 \leq p < \infty)$ with $p$-norm. Then $A$ is Bourbaki quasi-precompact in itself iff for each $\varepsilon >0$ there exists $n_0\in \mathbb{N}$ such that for each $x\in A$ there exists $y_x^{\varepsilon} \in A,$ satisfying the following property
	
	(*) $x$ can be joined by an $\varepsilon$-chain to $y_x^{\varepsilon}$ in $A$ and $\displaystyle{\sum_{i> n_0}} |y^{\varepsilon}_{x,i}|^p < \varepsilon^p$ where $y^{\varepsilon}_x=(y^{\varepsilon}_{x,i}).$
\end{Theorem}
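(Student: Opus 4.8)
The plan is to adapt the classical characterization of totally bounded subsets of $\ell^p$ (uniform smallness of tails) to the chainable setting, where the tail of each point is controlled not directly but only after travelling along an $\varepsilon$-chain, reflecting the replacement of the balls $B_\varepsilon$ by the chainable components $B^{\infty}_\varepsilon$. The necessity direction is short. Assuming $A$ is Bourbaki quasi-precompact in itself and fixing $\varepsilon>0$, I would pick a finite set $p_1,\dots,p_k\in A$ with $A\subseteq\bigcup_{i=1}^k B^{\infty}_\varepsilon(p_i)$ (all chains inside $A$). Since each $p_i\in\ell^p$ has a summable tail, there is $n_i$ with $\sum_{j>n_i}|p_{i,j}|^p<\varepsilon^p$; set $n_0=\max_i n_i$. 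For arbitrary $x\in A$ choose $i$ with $x\in B^{\infty}_\varepsilon(p_i)$ and put $y_x^\varepsilon=p_i$. Then $x$ is joined to $y_x^\varepsilon$ by an $\varepsilon$-chain in $A$ and $\sum_{j>n_0}|y_{x,j}^\varepsilon|^p\le\sum_{j>n_i}|p_{i,j}|^p<\varepsilon^p$, which is precisely condition (*).

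For the sufficiency direction, fix $\varepsilon>0$ and apply (*) with parameter $\varepsilon/3$, obtaining $n_0$ and, for each $x\in A$, a point $y_x:=y_x^{\varepsilon/3}\in A$ joined to $x$ by an $(\varepsilon/3)$-chain in $A$ and satisfying $\sum_{i>n_0}|y_{x,i}|^p<(\varepsilon/3)^p$. The key idea is to push the problem into finite dimensions: consider the projections $\pi(y_x)=(y_{x,1},\dots,y_{x,n_0})$. Because $A$ is bounded, $\{\pi(y_x):x\in A\}$ is a bounded, hence precompact, subset of $(\mathbb{R}^{n_0},\|\cdot\|_p)$, so it admits a finite $r$-net with centres among the projections themselves, say $\pi(y_{x_1}),\dots,\pi(y_{x_m})$ with $x_1,\dots,x_m\in A$ and $r<\varepsilon/6$ fixed in advance.

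The decisive step is to combine this finite-dimensional closeness with the tail bound. If $x$ is such that $\pi(y_x)$ lies within $r$ of $\pi(y_{x_j})$, then splitting the $\ell^p$-norm into its first $n_0$ coordinates and its tail and using $(a^p+b^p)^{1/p}\le a+b$ together with Minkowski on the tail gives $\|y_x-y_{x_j}\|_p\le\|\pi(y_x)-\pi(y_{x_j})\|_p+\bigl(\sum_{i>n_0}|y_{x,i}|^p\bigr)^{1/p}+\bigl(\sum_{i>n_0}|y_{x_j,i}|^p\bigr)^{1/p}<2r+\tfrac{2\varepsilon}{3}<\varepsilon$. Thus $y_x$ and $y_{x_j}$ are joined by a single-edge $\varepsilon$-chain in $A$, and concatenating the $(\varepsilon/3)$-chain from $x$ to $y_x$, this edge, and the reverse $(\varepsilon/3)$-chain from $y_{x_j}$ to $x_j$ yields an $\varepsilon$-chain in $A$ from $x$ to $x_j$. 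Hence $A\subseteq\bigcup_{j=1}^m B^{\infty}_\varepsilon(x_j)$ with every chain inside $A$, which is exactly Bourbaki quasi-precompactness in itself.

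I expect the main obstacle to be this converse direction, and specifically the constant bookkeeping needed so that the concatenated chain genuinely has mesh below $\varepsilon$ while every intermediate vertex and edge stays inside $A$, as demanded by the \emph{in itself} formulation; the reduction to $\mathbb{R}^{n_0}$ and the triangle-type inequality above are the engine that makes the estimate close.
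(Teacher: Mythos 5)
Your proposal is correct and follows essentially the same route as the paper's proof: the necessity argument is identical, and for sufficiency both arguments apply (*) with parameter $\varepsilon/3$, project the chosen points $y_x^{\varepsilon/3}$ onto their first $n_0$ coordinates, use boundedness of $A$ to extract a finite net in $\mathbb{R}^{n_0}$ with centres coming from $A$, estimate $\|y_x^{\varepsilon/3}-y_z^{\varepsilon/3}\|_p<\varepsilon$ by splitting head and tail via Minkowski, and concatenate the $(\varepsilon/3)$-chains with this single edge. The only cosmetic difference is that you take the net directly in the $p$-norm with radius $r<\varepsilon/6$, while the paper uses balls of radius $\delta$ with $n_0\delta^p<(\varepsilon/6)^p$ to absorb the coordinatewise bookkeeping.
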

\begin{proof}
	First let $A\subseteq \ell^p$ be Bourbaki quasi-precompact in itself and let $\varepsilon >0$ be given. Then there exist $x_1,x_2,...,x_k \in A$ such that $A\subseteq \displaystyle{\bigcup_{i=1}^k} B^{\infty}_{\varepsilon}(x_i)$ in $A.$ Note that there exists $n_i\in \mathbb{N}$ such that $\displaystyle{\sum_{j> n_i}} |x_{i,j}|^p < \varepsilon^p$ where $x_i=(x_{i,j}).$ Take $n_0=\max\{n_1,...,n_k\}.$ Subsequently $\displaystyle{\sum_{j> n_0}} |x_{i,j}|^p < \varepsilon^p$ for each $i=1,..,k$ and each $x\in A$ can be joined to some $x_i$ by an $\varepsilon$-chain in $A$. \par
	Conversely, suppose that $A\subseteq \ell^p$ satisfies the condition (*) and let $\varepsilon >0$ be given. Then there exists $n_0\in \mathbb{N}$ such that for each $x\in A$ one can find a $y_x^{\frac{\varepsilon}{3}} \in A,$ so that $x$ can be joined by an $\frac{\varepsilon}{3}$-chain to $y_x^{\frac{\varepsilon}{3}}$ in $A$ and furthermore $\displaystyle{\sum_{i> n_0}} |y^{\frac{\varepsilon}{3}}_{x,i}|^p < (\frac{\varepsilon}{3})^p$ where $y^{\frac{\varepsilon}{3}}_x=(y^{\frac{\varepsilon}{3}}_{x,i}).$ Now define a map $\phi:A\to \mathbb{R}^{n_0}$ by $\phi(x)=(y^{\frac{\varepsilon}{3}}_{x,1},y^{\frac{\varepsilon}{3}}_{x,2},....,y^{\frac{\varepsilon}{3}}_{x,n_0}).$ We can make $\phi$ well defined by selecting only one $y^{\frac{\varepsilon}{3}}_x$ for each $x.$ As $A$ is bounded, $\phi(A)$ is precompact and consequently there are finite number of points $x_1,x_2,...,x_m \in A$ such that $\phi(A)\subseteq \displaystyle{\bigcup_{j=1}^m} B_{\delta}(\phi(x_j))$ in $\mathbb{R}^{n_0},$ where $n_0\delta^p < (\frac{\varepsilon}{6})^p.$ Write $B_{\delta}(\phi(x_j))=U_j.$ Note that for any $x,z \in \phi^{-1}(U_j)$ we can obtain that $||y^{\frac{\varepsilon}{3}}_x - y^{\frac{\varepsilon}{3}}_z||_p \leq (\displaystyle{\sum_{i=1}^{n_0}} |y^{\frac{\varepsilon}{3}}_{x,i}- y^{\frac{\varepsilon}{3}}_{z,i}|^p)^{\frac{1}{p}} + (\displaystyle{\sum_{i> n_0}} |y^{\frac{\varepsilon}{3}}_{x,i}- y^{\frac{\varepsilon}{3}}_{z,i}|^p)^{\frac{1}{p}}< \frac{\varepsilon}{3}+\frac{\varepsilon}{3}+\frac{\varepsilon}{3} = \varepsilon.$ Moreover $x$ and $z$ can be joined by an $\frac{\varepsilon}{3}$-chain in $A$ with $y^{\frac{\varepsilon}{3}}_x$ and $y^{\frac{\varepsilon}{3}}_z$ respectively. Now it is easy to check that $A\subseteq \displaystyle{\bigcup_{j=1}^m} \phi^{-1}(U_j).$ Therefore $A$ is Bourbaki quasi-precompact in itself.
\end{proof}

\section{Certain observations on ward continuous functions}
\label{}
In this section our main interest are functions which preserve quasi-Cauchy sequences i.e. ward continuous functions.

 First we recall some basic definitions before proceeding to our  main results.
\begin{itemize}
\item Let $(X,d)$ be a metric space. Then $I(x)=d(x,\{x\}^c),$ which is the degree of isolation of a point $x\in X.$ $I(x)=0$ iff $x\in X^{'},$ where $X^{'}$ is the set of all limit point of $X$ \cite{a}.
\item A sequence $(x_n)$ is called pseudo-Cauchy if for all $\varepsilon >0$ and for all $n\in \mathbb{N},$ there exist $j,k \in \mathbb{N}$ such that $j\neq k, j,k>n$ and $d(x_j,x_k)< \varepsilon$ \cite{s1}.
\item A space $X$ is called $UC$ (Atsuji) iff every real valued continuous function defined on $X$ is uniformly continuous \cite{a}.
\item Let $(X,d)$ be a metric space. Then two subsets $\phi\neq A,B$ of $X$ are called Cauchy separated if $A,B$ do not have any common Cauchy sequence i.e. for each Cauchy sequence $(x_n)$ in $X$ the sets, $\{n : x_n \in A\}$ and $\{n : x_n \in B\}$ cannot be infinite simultaneously \cite{PDSPNA1}.
\item Let $\phi \neq A, B \subset X$. Then $A,B$ are said to be connected through a quasi-Cauchy sequence if there exists a quasi-Cauchy sequence $(x_n)$ in $X$ such that $x_{n_k} \in A$ and $x_{{n_k}+1} \in B$ for some subsequence $(x_{n_k})$ of $(x_n)$ \cite{PDSPNA1}.
\end{itemize}
We start by noting that in line of the following classical results:\par
$(1)$ In a totally bounded space every real valued Cauchy continuous function is uniformly continuous.\par
$(2)$ $f:X\to \mathbb{R}$ Cauchy continuous iff $f|_A$ is uniformly continuous for every totally bounded subset $A$ of $X.$\par

One can obtain similar observations regarding ward continuous functions.
\begin{Theorem}\label{c}
Let $X$ be a Bourbaki quasi-precompact space. Then every real valued ward continuous function defined on $X$ is uniformly continuous.
\end{Theorem}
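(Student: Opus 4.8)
The plan is to argue by contradiction, mirroring the classical proof that on a totally bounded space every real valued Cauchy continuous function is uniformly continuous, but replacing Cauchy sequences throughout by quasi-Cauchy sequences. Suppose $f\colon X\to\mathbb{R}$ is ward continuous but fails to be uniformly continuous. Then there is an $\varepsilon_0>0$ together with sequences $(x_n)$ and $(y_n)$ in $X$ such that $d(x_n,y_n)\to 0$ while $|f(x_n)-f(y_n)|\geq \varepsilon_0$ for every $n$. The goal is to manufacture from these data a single quasi-Cauchy sequence whose image under $f$ is not quasi-Cauchy, contradicting ward continuity.

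First I would feed $(x_n)$ into the Bourbaki quasi-precompactness of $X$. By the Corollary following Theorem \ref{b}, $(x_n)$ admits a Bourbaki quasi-Cauchy subsequence $(x_{n_k})$, and by Theorem \ref{a} this subsequence is itself a subsequence of some genuine quasi-Cauchy sequence $(z_m)$ of $X$; say $z_{m_k}=x_{n_k}$ with $(m_k)$ strictly increasing. Passing to the subsequence only improves $d(x_{n_k},y_{n_k})\to 0$, so nothing is lost.

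The decisive step is to splice the offending $y$'s into $(z_m)$ without destroying the quasi-Cauchy property. Concretely, I would form a new sequence $(w_j)$ by running through $(z_m)$ and inserting $y_{n_k}$ immediately after the term $z_{m_k}=x_{n_k}$ for each $k$. Every newly created consecutive gap can then be controlled by the triangle inequality: the gap $d(x_{n_k},y_{n_k})$ tends to $0$ by construction, while the gap $d(y_{n_k},z_{m_k+1})\le d(y_{n_k},x_{n_k})+d(z_{m_k},z_{m_k+1})$ tends to $0$ since $(z_m)$ is quasi-Cauchy and $m_k\to\infty$; all surviving gaps $d(z_{m+1},z_m)$ already tend to $0$. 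Hence $(w_j)$ is quasi-Cauchy. On the other hand, $(f(w_j))$ contains, infinitely often, the consecutive pair $f(x_{n_k}),f(y_{n_k})$ whose distance is at least $\varepsilon_0$, so $(f(w_j))$ is not quasi-Cauchy, the desired contradiction.

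The main obstacle to watch is exactly the point that makes this different from the Cauchy setting: a quasi-Cauchy sequence controls only successive distances, not the mutual proximity of all its terms, so one cannot simply interlace $(x_{n_k})$ and $(y_{n_k})$ and hope for a quasi-Cauchy sequence. The construction circumvents this by inserting each $y_{n_k}$ adjacent to the term $x_{n_k}$ already sitting inside the quasi-Cauchy sequence $(z_m)$, so that closeness is required only locally and is guaranteed by $d(x_n,y_n)\to 0$. I would take care to verify that the insertions occur at positions $m_k\to\infty$ so that the relevant original gaps $d(z_{m_k},z_{m_k+1})$ are already small, and that reindexing the spliced sequence does not reintroduce any large consecutive gap.
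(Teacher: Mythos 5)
Your proof is correct: the contradiction setup, the use of Bourbaki quasi-precompactness via the sequential characterization (Theorem \ref{b} and its corollary) together with Theorem \ref{a}, and the careful splicing of the $y_{n_k}$'s adjacent to the $x_{n_k}$'s inside the ambient quasi-Cauchy sequence all hold up. The paper states Theorem \ref{c} without proof, but your splicing construction is precisely the technique the paper itself deploys for the analogous implications elsewhere (e.g.\ $(8)\Rightarrow(1)$ in Theorem \ref{d}), so your argument is essentially the intended one.
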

\begin{Theorem}
Let $(X,d)$ be a metric space. Then a function $f:X\to \mathbb{R}$ is ward continuous iff restriction of $f$ on $A$ is uniformly continuous for every Bourbaki quasi-precompact subset $A$ of $X.$
\end{Theorem}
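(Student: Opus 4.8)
The plan is to prove the two implications separately. The sufficiency relies only on the earlier observation that the set of terms of a quasi-Cauchy sequence is Bourbaki quasi-precompact in itself, whereas the necessity uses the sequential characterizations of Theorems \ref{a} and \ref{b}. One cannot simply invoke Theorem \ref{c} for the necessity: although $f|_A$ is automatically ward continuous on the subspace $A$, the set $A$ is only assumed Bourbaki quasi-precompact \emph{in} $X$ and need not be Bourbaki quasi-precompact as a space, so an interleaving argument will be needed instead.

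For the sufficiency (``$\Leftarrow$''), suppose $f|_A$ is uniformly continuous for every Bourbaki quasi-precompact subset $A$ of $X$, and let $(x_n)$ be any quasi-Cauchy sequence in $X$. I would take $A=\{x_n:n\in\mathbb{N}\}$; since $(x_n)$ is quasi-Cauchy, $A$ is Bourbaki quasi-precompact in itself and hence also in $X$ (forming chains inside $A$ uses only points of $A\subseteq X$). Thus $f|_A$ is uniformly continuous, so for each $\varepsilon>0$ there is $\delta>0$ with $|f(u)-f(v)|<\varepsilon$ whenever $u,v\in A$ and $d(u,v)<\delta$. As $d(x_{n+1},x_n)\to 0$, eventually $d(x_{n+1},x_n)<\delta$, whence $|f(x_{n+1})-f(x_n)|<\varepsilon$ eventually; so $(f(x_n))$ is quasi-Cauchy and $f$ is ward continuous.

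For the necessity (``$\Rightarrow$''), assume $f$ is ward continuous and let $A$ be Bourbaki quasi-precompact in $X$. Arguing by contradiction, suppose $f|_A$ is not uniformly continuous; then there are $\varepsilon_0>0$ and sequences $(x_n),(y_n)$ in $A$ with $d(x_n,y_n)<\frac1n$ but $|f(x_n)-f(y_n)|\ge\varepsilon_0$ for all $n$. By Theorem \ref{b} the sequence $(x_n)$ admits a Bourbaki quasi-Cauchy subsequence $(x_{n_k})$ in $X$, and by Theorem \ref{a} this subsequence sits inside some quasi-Cauchy sequence $(z_m)$ of $X$, say $x_{n_k}=z_{m_k}$ with $m_k$ strictly increasing.

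The key construction, which I expect to be the main obstacle, is to \emph{interleave} the pairs into $(z_m)$: I would form a new sequence $(w_j)$ by running through $z_1,z_2,\dots$ and, each time the term $z_{m_k}=x_{n_k}$ is reached, inserting the block $y_{n_k},x_{n_k}$ immediately after it before continuing with $z_{m_k+1}$. The resulting sequence lies in $X$ (its terms are among the $z_m\in X$ and the $y_{n_k}\in A\subseteq X$) and it is quasi-Cauchy: the consecutive gaps inherited from $(z_m)$ tend to $0$ by the quasi-Cauchy property of $(z_m)$, while each inserted gap $d(x_{n_k},y_{n_k})$ and $d(y_{n_k},x_{n_k})$ is at most $\frac{1}{n_k}\to 0$. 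The careful bookkeeping needed to verify that \emph{every} consecutive gap is eventually small is the delicate point. Ward continuity of $f$ then forces $(f(w_j))$ to be quasi-Cauchy; but by construction $f(x_{n_k})$ and $f(y_{n_k})$ occur as consecutive terms of $(f(w_j))$ with $|f(x_{n_k})-f(y_{n_k})|\ge\varepsilon_0$ for every $k$, so infinitely many consecutive gaps remain at least $\varepsilon_0$ apart, contradicting the quasi-Cauchy condition. This contradiction shows $f|_A$ is uniformly continuous and completes the proof.
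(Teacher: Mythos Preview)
The paper does not actually supply a proof of this theorem; it is listed among the ``similar observations'' following the classical totally-bounded analogues, with no argument given. Your proof is correct and provides precisely the details one would expect. In particular, you are right that Theorem~\ref{c} cannot be invoked directly for the forward implication, since $A$ is only assumed Bourbaki quasi-precompact \emph{in $X$} rather than in itself; your interleaving construction via Theorems~\ref{a} and~\ref{b} is exactly what is needed to handle this, and the verification that the interleaved sequence remains quasi-Cauchy is routine once the insertion scheme is written down.
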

Further one can use ward continuous functions to characterize Bourbaki quasi-precompact spaces as follows.
\begin{Theorem}(cf. \cite{Hueber})
A space $X$ is Bourbaki quasi-precompact iff every real valued ward continuous function defined on $X$ is uniformly continuous and for every $\varepsilon >0,$ $\{x:I(x)>\varepsilon\}$ is finite.
\end{Theorem}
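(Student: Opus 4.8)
The plan is to prove both implications, treating the ``only if'' direction as a repackaging of earlier results and concentrating the real work on the ``if'' direction.

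For necessity, assume $X$ is Bourbaki quasi-precompact. That every real valued ward continuous function on $X$ is uniformly continuous is exactly Theorem \ref{c}, so nothing further is needed for that half. For the finiteness of $S_\varepsilon := \{x : I(x) > \varepsilon\}$, the key observation is that $I(x) > \varepsilon$ forces $B_\varepsilon(x) = \{x\}$, and then an immediate induction gives $B^\infty_\varepsilon(x) = \{x\}$ (no $\varepsilon$-chain can ever leave such a point). Hence for distinct $x, x' \in S_\varepsilon$ we have $x' \notin B^\infty_\varepsilon(x)$, so $S_\varepsilon$ is uniformly chain discrete with $\delta = \varepsilon$. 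By the characterization proved earlier in this section — namely that a Bourbaki quasi-precompact space admits only finite uniformly chain discrete subsets — $S_\varepsilon$ must be finite.

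For sufficiency I would argue by contradiction: assume both hypotheses hold but that $X$ is \emph{not} Bourbaki quasi-precompact, and use the isolation hypothesis to manufacture a ward continuous function that is not uniformly continuous, contradicting the first hypothesis. Failure of Bourbaki quasi-precompactness yields (via the uniformly-chain-discrete characterization, or directly from the converse part of Theorem \ref{b}) an $\varepsilon > 0$ and distinct points $(x_n)$ lying in pairwise distinct $\varepsilon$-chainable components $C_n = B^\infty_\varepsilon(x_n)$. Since the $x_n$ are distinct and each $\{x : I(x) > 1/m\}$ is finite, only finitely many $x_n$ can lie in any one of these sets, forcing $I(x_n) \to 0$; hence (after discarding finitely many indices) I may choose $y_n \neq x_n$ with $d(x_n, y_n) \to 0$, and as $d(x_n, y_n) < \varepsilon$ eventually, each $y_n$ lies in the same component $C_n$ as $x_n$.

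The decisive structural fact is that ward continuity is \emph{component-local} here: any quasi-Cauchy sequence $(z_k)$ has $d(z_{k+1}, z_k) < \varepsilon$ from some index on, so its tail is trapped inside a single $\varepsilon$-chainable component. This lets me define $f$ independently on each component: on $C_n$ set $f(z) = \min\{1, d(z, x_n)/d(x_n, y_n)\}$, which is Lipschitz on $C_n$ and hence ward continuous there, and put $f \equiv 0$ on every other component. The blow-up of the local Lipschitz constants $1/d(x_n,y_n)$ does no global harm, because a quasi-Cauchy sequence eventually stays in one component on which $f$ is uniformly continuous, so $(f(z_k))$ is quasi-Cauchy and $f$ is ward continuous. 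Yet $f(x_n) = 0$, $f(y_n) = 1$ while $d(x_n, y_n) \to 0$, so $f$ is not uniformly continuous, giving the desired contradiction. I expect the main obstacle to be precisely this sufficiency argument — in particular the realization that ward continuity localizes to $\varepsilon$-chainable components (which is what allows a function with unbounded local Lipschitz constants to stay ward continuous), combined with the correct use of the isolation hypothesis to produce the pairs $(x_n,y_n)$ witnessing non-uniform continuity.
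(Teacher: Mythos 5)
Your proof is correct, but it is organized differently from the paper's, most notably in the sufficiency direction. For necessity, both you and the paper quote Theorem \ref{c} for the first condition; for the finiteness of $\{x: I(x)>\varepsilon\}$ the paper argues directly from the covering definition (a chain joining a point $y$ with $I(y)>\varepsilon$ to one of the finitely many centers would place some other point within $\varepsilon$ of $y$), whereas you observe that such points have singleton $\varepsilon$-chainable components, hence form a uniformly chain discrete set, and then invoke the earlier characterization of Bourbaki quasi-precompactness via finiteness of uniformly chain discrete subsets --- the same underlying observation, packaged through a prior theorem. The real divergence is in sufficiency: the paper's proof is a short reduction, noting that the isolation hypothesis forces $I(x_n)\to 0$ for any sequence and then citing the implication $(1)\Rightarrow(3)$ of the \emph{later} Theorem \ref{d} to extract a Bourbaki quasi-Cauchy subsequence, so that Theorem \ref{b} applies. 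You instead argue by contradiction with a self-contained construction: points $x_n$ in pairwise distinct $\varepsilon$-chainable components, companions $y_n\neq x_n$ with $d(x_n,y_n)\to 0$ supplied by the isolation hypothesis, and a function defined component-wise, Lipschitz on each component with constants $1/d(x_n,y_n)$ that blow up; ward continuity follows from your correct structural observation that the tail of any quasi-Cauchy sequence is trapped in a single $\varepsilon$-chainable component, while $f(x_n)=0$, $f(y_n)=1$ kills uniform continuity. This is in effect a streamlined inlining of the construction the paper uses to prove Theorem \ref{d} $(1)\Rightarrow(2)$ (there done with bump functions on small balls and pseudo-Cauchy sequences). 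What your route buys is logical self-containment: no forward reference to Theorem \ref{d}, no pseudo-Cauchy machinery, and --- since your $x_n$ are distinct by construction --- it sidesteps the paper's small imprecision in asserting $\{n: I(x_n)>\varepsilon/2\}$ finite for arbitrary (possibly repeating) sequences; the cost is repeating a construction that the paper gets to state once and reuse.
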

\begin{proof}
Let $X$ be a Bourbaki quasi-precompact space. Then by Theorem \ref{c} every real valued ward continuous function defined on $X$ is uniformly continuous. For the next part, suppose on the contrary that there exists $\varepsilon >0$ such that $E=\{x:I(x)> \varepsilon \}$ is infinite. Then from Bourbaki quasi-precompactness of $X$ one can find finitely many points $x_1,x_2,...,x_k$ such that $X=\displaystyle{\bigcup_{i=1}^k} B^{\infty}(x_i,\varepsilon).$ Evidently there exists some $y\in E$ such that $y\neq x_i$ for all $i=1,...,k.$ Let $y\in B^{\infty}_{\varepsilon}(x_{i_0})$ for some $i_0\in \{1,..,k\}.$ This implies existence of finitely many distinct points $y=p_0,p_1,...,p_n=x_{i_0}$ such that $d(p_{j-1},p_j)<\varepsilon $ for all $j=1,...,n.$ As $d(y,p_1)<\varepsilon,$ $I(y)<\varepsilon.$ Which contradicts the fact that $y\in E.$ Hence $E$ is finite.\par
Conversely let $(x_n)$ be a sequence in $X.$ By the given condition $\{n:I(x_n) >\frac{\varepsilon}{2}\}$ is finite for every $\varepsilon >0,$ which implies that $I(x_n)\to 0.$ Consequently from Theorem \ref{d} we can conclude that $(x_n)$ has a Bourbaki quasi-Cauchy subsequence in $X$ and hence $X$ is Bourbaki quasi-precompact.
\end{proof}

In \cite{s1}, a wide class of equivalent conditions was presented characterizing spaces where Cauchy continuity agrees with uniform continuity. Taking quasi-Cauchy sequences and corresponding notion of continuity, i.e., ward continuity into the picture, it is natural to ask for conditions to characterize spaces where Cauchy continuity would agree with ward continuity or ward continuity would agree with uniform continuity etc. We precisely deal with such questions in this section. As we will see from the next few results, some of these conditions are expectedly analogous to existing ones \cite{s1}, but there are also certain conditions which were never brought up in the literature. Following is the simplest observation in this line.
\begin{Theorem}
Let $(X,d)$ and $(Y,\rho)$ be two metric spaces. Then the following statements are equivalent.\par
$(1)$ $X$ is Bourbaki quasi-complete.\par
$(2)$ Every subsequence of a quasi-Cauchy sequence in $X$ has a cluster point in $X$.\par
$(3)$ Every real valued continuous function defined on $X$ is ward continuous.
\end{Theorem}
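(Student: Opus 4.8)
The plan is to prove $(1)\Leftrightarrow(2)$ essentially for free and then close a cycle with $(2)\Rightarrow(3)$ and $(3)\Rightarrow(1)$, the last by contraposition. The equivalence $(1)\Leftrightarrow(2)$ is immediate from Theorem~\ref{a}: that result identifies the Bourbaki quasi-Cauchy sequences in $X$ with exactly the subsequences of quasi-Cauchy sequences of $X$, so the cluster-point demand defining Bourbaki quasi-completeness in $(1)$ and the cluster-point demand in $(2)$ are literally the same requirement.

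For $(2)\Rightarrow(3)$, let $f\colon X\to\mathbb{R}$ be continuous and let $(x_n)$ be quasi-Cauchy; I must show $|f(x_{n+1})-f(x_n)|\to0$. Suppose not, so there are $\varepsilon_0>0$ and indices $n_1<n_2<\cdots$ with $n_{j+1}>n_j+1$ and $|f(x_{n_j+1})-f(x_{n_j})|\ge\varepsilon_0$. The subsequence $(x_{n_j})$ is a subsequence of the quasi-Cauchy sequence $(x_n)$, hence by $(2)$ has a cluster point $z$; pass to a sub-subsequence with $x_{n_{j_i}}\to z$. Since $(x_n)$ is quasi-Cauchy and $n_{j_i}\to\infty$, we get $d(x_{n_{j_i}+1},x_{n_{j_i}})\to0$, so $x_{n_{j_i}+1}\to z$ as well. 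Continuity then gives $f(x_{n_{j_i}})\to f(z)$ and $f(x_{n_{j_i}+1})\to f(z)$, whence $|f(x_{n_{j_i}+1})-f(x_{n_{j_i}})|\to0$, contradicting the choice $\ge\varepsilon_0$. Thus $f$ is ward continuous; note this step uses precisely the cluster points of subsequences that Theorem~\ref{a} makes available.

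For $(3)\Rightarrow(1)$ I would argue the contrapositive: if $X$ is not Bourbaki quasi-complete, I produce a continuous function that is not ward continuous. Fix a Bourbaki quasi-Cauchy sequence $(x_n)$ with no cluster point; discarding repetitions we may take the $x_n$ distinct, so that $D=\{x_n\}$ is closed and discrete (though in general not uniformly discrete) with $\alpha_n:=d(x_n,D\setminus\{x_n\})>0$. By Theorem~\ref{a}, $(x_n)$ is a subsequence of a quasi-Cauchy sequence $(z_k)$ obtained by joining successive $x_n$'s by ever-finer chains; let $q_n$ be the first subsequent term of $(z_k)$ distinct from $x_n$, so $d(x_n,q_n)\to0$. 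The idea is to place a tent-shaped bump of height $1$ at each odd-indexed $x_n$ and set $f=0$ elsewhere, choosing the bump radius $\rho_n$ strictly smaller than both $\tfrac13\alpha_n$ and $d(x_n,q_n)$. Then $f(x_{2m+1})=1$ while $q_{2m+1}$ falls outside its own bump, producing a jump $|f(q_{2m+1})-f(x_{2m+1})|=1$ at a consecutive pair of terms of $(z_k)$; as this recurs for every odd index, $(f(z_k))$ is not quasi-Cauchy, so $f$ is not ward continuous.

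The main obstacle is making this last construction airtight, namely verifying that $f$ is genuinely continuous on all of $X$ and that the unit jump is never accidentally cancelled. Continuity will follow because the bumps have pairwise disjoint supports that, since $D$ has no cluster point, are locally finite, so near every point $f$ agrees with at most one tent and vanishes near every point outside $D$. The delicate bookkeeping is to force $f(q_{2m+1})=0$ exactly, i.e. to guarantee that $q_{2m+1}$ avoids \emph{every} other bump and not merely its own; this is arranged by shrinking the radii further (for instance also imposing $\rho_n\le 1/n$) and, if necessary, thinning $(x_n)$ so that the chain steps stay clear of the remaining bumps. With this in place the witnessing quasi-Cauchy sequence is precisely the chained sequence of Theorem~\ref{a}, and the recurring unit jumps exhibit a continuous function that is not ward continuous, which together with $(1)\Leftrightarrow(2)$ and $(2)\Rightarrow(3)$ establishes the equivalence of all three statements.
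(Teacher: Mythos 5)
Your handling of $(1)\Leftrightarrow(2)$ via Theorem~\ref{a} is exactly what the paper does, and your treatment of $(2)\Leftrightarrow(3)$ is genuinely different from the paper's: the paper disposes of that equivalence by citing Theorem 3.3 of \cite{PDSPNA1}, whereas you prove it from scratch. Your $(2)\Rightarrow(3)$ argument is complete and correct --- extract a cluster point $z$ of $(x_{n_j})$, use quasi-Cauchyness of $(x_n)$ to drag $x_{n_j+1}$ to the same limit, and contradict continuity --- so that direction is a self-contained replacement for the citation.

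The gap is in $(3)\Rightarrow(1)$, at exactly the point you flag as ``delicate bookkeeping'', and the repairs you offer do not close it. Shrinking radii can never force $f(q_{2m+1})=0$ when the chain point $q_{2m+1}$ coincides \emph{exactly} with another bump centre $x_{2j+1}$: the value at a centre is $1$ no matter how small the radius, so the unit jump collapses to $0$. This case cannot be excluded, because distinct terms of $(x_n)$ may be arbitrarily close to each other even though $(x_n)$ has no cluster point (think of the pattern $n,\,n+\tfrac1n$), so nothing prevents the chain produced by Theorem~\ref{a} from stepping from $x_{2m+1}$ directly onto some other $x_{2j+1}$. Your ``thinning'' remark is the right instinct but is unproved, and it hides a circularity: if you thin $(x_n)$ and then re-apply Theorem~\ref{a}, the chain points change and the coincidence problem can recur. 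The correct version keeps the original $(z_k)$ fixed (legitimate, since a subsequence of $(x_n)$ is still a subsequence of $(z_k)$, with the same $q$'s) and then shows there is an infinite index set $I$ with $\{q_n:n\in I\}\cap\{x_m:m\in I\}=\emptyset$; this does hold, because each fibre $\{n:q_n=x_m\}$ is finite (an infinite fibre would force $d(x_n,x_m)=d(x_n,q_n)\to 0$, making $x_m$ a cluster point), so a greedy selection avoiding finitely many forbidden indices at each step succeeds. Alternatively, and more cleanly, imitate the device this paper itself uses in proving Theorem 4.4: give the bump at $x_m$ height $m$ rather than height $1$, and choose $\rho_m<\tfrac14\,d\bigl(x_m,\,S\setminus\{x_m\}\bigr)$ where $S=\{x_n:n\in\mathbb{N}\}\cup\{q_n:n\in\mathbb{N}\}$ (a positive quantity, since $S$ has no cluster point). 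Then $f(q_m)$ is either $0$ or equal to some integer $j\neq m$, so $|f(q_m)-f(x_m)|\geq 1$ in every case, the chained sequence witnesses failure of ward continuity, and no thinning is needed. As written, however, your proof of $(3)\Rightarrow(1)$ is incomplete.
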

\begin{proof}
The equivalence of $(1)$ and $(2)$ follows from Theorem \ref{a} and equivalence of $(2)$ and $(3)$ follows from Theorem 3.3 of \cite{PDSPNA1}.
\end{proof}
Our next result concerns with the coincidence of ward continuity with Cauchy continuity.
\begin{Theorem}
Let $(X,d)$ and $(Y,\rho)$ be two metric spaces and let $(\widehat{X},\widehat{d})$ denote the completion of $X.$ Then the following conditions are equivalent.\par
$(1)$ Every Cauchy continuous function $f:(X,d)\to (Y,\rho)$ is ward continuous.\par
$(2)$ Every real valued Cauchy continuous function defined on $X$ is ward continuous.\par
$(3)$ Every Bourbaki quasi-Cauchy sequence in $X$ has a Cauchy subsequence.\par
$(4)$ $(\widehat{X},\widehat{d})$ is Bourbaki quasi-complete.\par
$(5)$ Any two Cauchy separated sets cannot be connected through a quasi-Cauchy sequence.\par
$(6)$ For a complete subset $A$ and a closed subset $B$ of $X$ with $A,B\neq \phi$ and $A\cap B=\phi$, $A$ and $B$ cannot be connected through a quasi-Cauchy sequence.
\end{Theorem}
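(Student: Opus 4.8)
The plan is to prove the six conditions equivalent by establishing a cycle of implications, routing through the most natural pairings. The structure I would use is $(1)\Rightarrow(2)$ trivially, then $(2)\Rightarrow(3)$, then $(3)\Leftrightarrow(4)$ via the completion, then $(3)\Rightarrow(1)$ to close the core loop among $(1)$--$(4)$, and finally I would handle the separation-type conditions $(5)$ and $(6)$ by showing, say, $(3)\Rightarrow(5)\Rightarrow(6)\Rightarrow(3)$ or by relating each directly to $(3)$. The implication $(1)\Rightarrow(2)$ is immediate since real-valued functions are a special case.

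For $(2)\Rightarrow(3)$ I would argue contrapositively: suppose some Bourbaki quasi-Cauchy sequence $(x_n)$ in $X$ has no Cauchy subsequence. By Theorem \ref{a}, $(x_n)$ sits inside a quasi-Cauchy sequence, and the absence of a Cauchy subsequence lets me extract a uniformly discrete (in the Cauchy sense) subsequence $(x_{n_k})$ with, after relabelling, $d(x_{n_k},x_{n_j})\geq\eta$ for $k\neq j$ and some $\eta>0$. On such a set one can build a real-valued Cauchy continuous function that fails to be ward continuous by assigning oscillating values along the embedded quasi-Cauchy sequence; Cauchy continuity holds because the points are Cauchy-separated, so no Cauchy sequence probes the oscillation, while ward continuity fails precisely because consecutive terms of the quasi-Cauchy sequence straddle the jump. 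Conversely, for $(3)\Rightarrow(1)$, if $f$ is Cauchy continuous and $(x_n)$ is quasi-Cauchy, then $(x_n)$ is Bourbaki quasi-Cauchy (being trivially a subsequence of itself), so by $(3)$ it decomposes into pieces each possessing Cauchy subsequences; a standard gluing argument using Cauchy continuity then forces $(f(x_n))$ to be quasi-Cauchy, giving ward continuity. The equivalence $(3)\Leftrightarrow(4)$ is the cleanest: a sequence in $X$ is Bourbaki quasi-Cauchy iff it is one in $\widehat{X}$ (Bourbaki quasi-precompactness being a uniform property preserved under the isometric embedding into the completion), and in the complete space $\widehat{X}$ having a Cauchy subsequence is equivalent to having a convergent subsequence, i.e. a cluster point, which is exactly Bourbaki quasi-completeness of $\widehat{X}$.

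For the separation conditions, $(6)\Rightarrow(5)$ follows because Cauchy-separated sets can be replaced by their closures in the completion, and the complete-versus-closed hypothesis in $(6)$ is a weaker requirement; the connecting quasi-Cauchy sequence is the same object. For $(5)\Rightarrow(3)$ I would take a Bourbaki quasi-Cauchy sequence with no Cauchy subsequence, split its range into two Cauchy-separated subsets $A=\{x_{n_k}:k\text{ odd}\}$ and $B=\{x_{n_k}:k\text{ even}\}$ along the embedded quasi-Cauchy sequence (Cauchy separation holds since no Cauchy sequence can have infinitely many terms in a set with all pairwise distances bounded below), and observe that the quasi-Cauchy sequence connects $A$ and $B$, contradicting $(5)$. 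The remaining arrow $(3)\Rightarrow(6)$ runs by contradiction: a quasi-Cauchy sequence connecting disjoint complete $A$ and closed $B$ yields, via $(3)$, a Cauchy subsequence whose terms alternate between $A$ and $B$, and completeness of $A$ together with closedness of $B$ forces a common limit point in $A\cap B$, contradicting disjointness.

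The main obstacle will be the careful function construction in $(2)\Rightarrow(3)$ and the alternation-limit argument in $(3)\Rightarrow(6)$. In the former, one must verify Cauchy continuity of the constructed real-valued function rigorously, which requires checking that every Cauchy sequence in $X$ eventually lands in a region where the function is well-behaved—this is where the Cauchy-separatedness of the uniformly-spaced terms does the real work, and it is delicate because the ambient space need not be discrete away from these terms. In the latter, the subtlety is that the connecting subsequence $(x_{n_k})$ with $x_{n_k}\in A$ and $x_{n_k+1}\in B$ need not itself be Cauchy; one must first pass to a Cauchy sub-subsequence via $(3)$ and then argue that both the $A$-indexed and $B$-indexed parts converge to the \emph{same} limit because the quasi-Cauchy gap $d(x_{n_k},x_{n_k+1})\to 0$ pins them together, placing that common limit in $A\cap B$.
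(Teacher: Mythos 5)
Your overall plan---running the cycle through $(3)$ with $(1)\Rightarrow(2)\Rightarrow(3)$, $(3)\Leftrightarrow(4)$, and then attaching $(5)$ and $(6)$---is viable, and parts of it are sound: $(1)\Rightarrow(2)$ is trivial; your sketch of $(2)\Rightarrow(3)$ (oscillating values supported near a uniformly separated subsequence) can be made to work and is a legitimate alternative to the paper's slicker device of defining $f$ only on the closed set $\{z_{n_k}\}\cup\{z_{n_k+1}\}$ of $\widehat{X}$ and invoking the Tietze extension theorem; and your $(3)\Rightarrow(6)$ (pass to a Cauchy subsequence of $(x_{n_k})$, use completeness of $A$ to get a limit $a$, and the vanishing gap $d(x_{n_k},x_{n_k+1})\to 0$ plus closedness of $B$ to force $a\in A\cap B$) is exactly right. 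Your $(3)\Rightarrow(1)$ is under-specified (``decomposes into pieces \ldots\ gluing''), but fixable: apply $(3)$ to the subsequence $(x_{n_k})$ along which $\rho(f(x_{n_k}),f(x_{n_k+1}))\geq\varepsilon$, then interleave the resulting Cauchy subsequence with its successors to obtain a Cauchy sequence whose image under $f$ is not Cauchy.

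However, three of your arrows have genuine gaps. First, $(3)\Rightarrow(4)$: Bourbaki quasi-completeness of $\widehat{X}$ quantifies over \emph{all} sequences in $\widehat{X}$, not only those whose terms lie in $X$; your transfer principle (``a sequence in $X$ is Bourbaki quasi-Cauchy iff it is one in $\widehat{X}$'') says nothing about a Bourbaki quasi-Cauchy sequence of points of $\widehat{X}\setminus X$. The missing idea is the paper's density step: embed such a sequence in a quasi-Cauchy sequence $(y_n)$ of $\widehat{X}$, choose $z_n\in X$ with $\widehat{d}(y_n,z_n)\to 0$ (so $(z_n)$ is still quasi-Cauchy), apply $(3)$ to suitable subsequences of $(z_n)$, and transfer the Cauchy subsequence back. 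Second, $(6)\Rightarrow(5)$ as you state it does not work: condition $(6)$ is a hypothesis about subsets of $X$, so it cannot be applied to closures taken in the completion; and even closures in $X$ of Cauchy separated sets need not contain a complete member (in $X=\mathbb{Q}$, the sets $A=\mathbb{Q}\cap(0,1)$ and $B=\mathbb{Q}\cap(2,3)$ are Cauchy separated, yet both closures are incomplete), so ``complete versus closed'' is \emph{not} a weaker requirement than Cauchy separation. Third, in $(5)\Rightarrow(3)$ the odd/even split fails to witness the connection: being ``connected through a quasi-Cauchy sequence'' requires \emph{consecutive} terms $w_m\in A$, $w_{m+1}\in B$ infinitely often, whereas in the quasi-Cauchy sequence produced by Theorem \ref{a} the immediate successor of $x_{n_k}$ is a chain point, not the next term of your subsequence; you would instead have to take $B$ to be the set of immediate successors $\{z_{n_k+1}\}$ and verify it is Cauchy separated from $\{x_{n_k}\}$. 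The paper avoids all three difficulties by orienting the cycle as $(3)\Rightarrow(4)\Rightarrow(5)\Rightarrow(6)\Rightarrow(1)$: in $(4)\Rightarrow(5)$ the cluster point in $\widehat{X}$ makes both $(x_{k_p})$ and $(x_{k_p+1})$ Cauchy, contradicting separation, and $(5)\Rightarrow(6)$ is the easy observation that a complete set and a disjoint closed set are automatically Cauchy separated.
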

\begin{proof}
$(1)\Rightarrow (2)$ straightforward.\par
$(2)\Rightarrow (3)$ On the contrary suppose that there is a Bourbaki quasi-Cauchy sequence $(x_k)$ which has no Cauchy subsequence. Theorem \ref{a} already assures the existence of a quasi-Cauchy sequence $(z_n)$ such that $z_{n_k}=x_k$ for some subsequence $(z_{n_k})$ of $(z_n).$ Clearly $(z_{n_k+1})$ has no Cauchy subsequence. Now we define a function $f:(z_{n_k})\cup (z_{n_k+1})\to \mathbb{R}$ by $f(z_{n_k})=0$ and $f(z_{n_k+1})=1$ for every $k\in \mathbb{N}.$ As $f$ is Cauchy continuous and $(z_{n_k})\cup (z_{n_k+1})$ is closed in $\widehat{X}$, by Tietze extension theorem $f$ can be extended to a continuous function $\widehat{f}:\widehat{X}\to \mathbb{R}$ such that the restriction of $\widehat{f}$ on $(z_{n_k})\cup (z_{n_k+1})$ is equal to $f.$ Observe that $\widehat{f}|_X$ is Cauchy continuous but it is not ward continuous, which contradicts the given assumption.\par
$(3)\Rightarrow (4)$ Let $(x_k)$ be a Bourbaki quasi-Cauchy sequence in $\widehat{X}.$ Then there exists a quasi-Cauchy sequence $(y_n)$ in $\widehat{X}$ which contains $(x_k)$ as a subsequence. In fact there exists a quasi-Cauchy sequence $(z_n)$ in $X$ such that $\widehat{d}(y_n,z_n)\to 0.$ From the given condition it follows that each subsequence of $(z_n)$ has a Cauchy subsequence. This in turn implies that $(x_k)$ has a Cauchy subsequence and consequently $(x_k)$ has a cluster point in $\widehat{X}.$\par
$(4)\Rightarrow (5)$ Let $A$ and $B$ be two Cauchy separated subsets of $X.$ If possible suppose that they can be connected through a quasi-Cauchy sequence. Then there exist a quasi-Cauchy sequence $(x_k)$ in $X$ and a subsequence $(x_{k_p})$ of $(x_k)$ such that $x_{k_p}\in A$ and $x_{k_p+1}\in B$ for all $p\in \mathbb{N}.$ From the given condition and passing to a subsequence we can conclude that $(x_{k_p})$ is Cauchy and consequently $(x_{k_p+1})$ is also Cauchy. This contradicts the fact that $A,B$ are Cauchy separated.\par
$(5)\Rightarrow (6)$ Let $A$ and $B$ be two non-empty subsets of $X$ where $A$ is complete and $B$ is closed with $A\cap B=\phi.$ It is obvious that $A$ and $B$ are Cauchy separated and hence from $(5)$ we can conclude that they cannot be connected through a quasi-Cauchy sequence.\par
$(6)\Rightarrow (1)$ Let $f:(X,d)\to (Y,\rho)$ be Cauchy continuous. If possible suppose that $f$ is not ward continuous. Then there exists a quasi-Cauchy sequence $(x_n)$ in $X$ but $(f(x_n))$ is not quasi-Cauchy in $Y.$ Consequently one can find a suitable $\varepsilon >0$ and a subsequence $(x_{n_k})$ of $(x_n)$ such that $\rho(f(x_{n_k+1}),f(x_{n_k}))\geq \varepsilon$ for all $k\in\mathbb{N}$. From Cauchy continuity of $f$ we can then conclude that $(x_{n_k})$ has no Cauchy subsequence. Now taking $A=\{x_{n_k+1}:k\in \mathbb{N}\}$ and $B=\{x_{n_k}:k\in \mathbb{N}\}$ we obtain two non-empty complete subsets of $X,$ which are connected through a quasi-Cauchy sequence. This contradicts the given assumption and hence $f$ must be ward continuous.
\end{proof}
Next we focus on the coincidence of ward continuity with uniform continuity.
\begin{Theorem}\label{d}
Let $(X,d)$ be a metric space and $X^{'}$ denote (as usual) the set of all accumulation points in $(X,d).$  Then the following conditions are equivalent.\par
$(1)$ Every real valued ward continuous function defined on $X$ is uniformly continuous.\par
$(2)$ Every pseudo-Cauchy sequence with distinct terms in $X$ has a Bourbaki quasi-Cauchy subsequence in $X.$\par
$(3)$ Every sequence $(x_n)$ in $X$ satisfying $\displaystyle{\lim_{n\to \infty}}I(x_n)=0$ has a Bourbaki quasi-Cauchy subsequence in $X.$\par
$(4)$ $X^{'}$ is Bourbaki quasi-precompact in $X$ and  any infinite Bourbaki quasi-complete subset $U$ of $X$, disjoint from $X^{'}$, is uniformly discrete.
\par
$(5)$ $X^{'}$ is Bourbaki quasi-precompact in $X$ and any infinite Bourbaki quasi-complete subset $U$ of $X,$ disjoint from $X^{'}$, satisfies that $\inf\{I(x):x\in U\}>0$.
\par
$(6)$ $X^{'}$ is Bourbaki quasi-precompact in $X$ and every sequence $(x_n)$ of isolated points without any Bourbaki quasi-Cauchy subsequence in $X$ satisfies the property that $\inf\{I(x_n):n\in \mathbb{N}\}>0$.\par
$(7)$ Every discrete Bourbaki quasi-complete subset $A$ of $X$
 is uniformly discrete.\par
$(8)$ Every Bourbaki quasi-complete subset $A$ of $X$ is an $UC$ space.

\end{Theorem}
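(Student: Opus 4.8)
The plan is to treat condition $(3)$ as the hub and show each of the others is equivalent to it, the genuinely analytic content residing in the equivalence $(1)\Leftrightarrow(3)$. For $(3)\Rightarrow(1)$, suppose $f:X\to\mathbb{R}$ is ward continuous but not uniformly continuous. Then there are $\varepsilon>0$ and sequences $(a_n),(b_n)$ with $d(a_n,b_n)\to0$ and $|f(a_n)-f(b_n)|\ge\varepsilon$; necessarily $a_n\ne b_n$, so $I(a_n)\le d(a_n,b_n)\to0$. By $(3)$, $(a_n)$ has a Bourbaki quasi-Cauchy subsequence $(a_{n_k})$, which by Theorem \ref{a} sits inside a quasi-Cauchy sequence $(z_m)$ with $a_{n_k}=z_{m_k}$. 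I would splice $b_{n_k}$ into $(z_m)$ immediately after $z_{m_k}$; since $d(z_{m_k},b_{n_k})=d(a_{n_k},b_{n_k})\to0$ and $d(b_{n_k},z_{m_k+1})\le d(b_{n_k},a_{n_k})+d(a_{n_k},z_{m_k+1})\to0$, the result is still quasi-Cauchy, yet its $f$-image has infinitely many consecutive jumps of size $\ge\varepsilon$, contradicting ward continuity.

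For the converse $(1)\Rightarrow(3)$ I argue contrapositively. Given $(x_n)$ with $I(x_n)\to0$ and no Bourbaki quasi-Cauchy subsequence, I extract (as in the second half of the proof of Theorem \ref{b}) an $\varepsilon$-chain-separated subsequence, i.e. $x_i\notin B^{\infty}_{\varepsilon}(x_j)$ for $i\ne j$, and choose $y_n\ne x_n$ with $\rho_n:=d(x_n,y_n)\to0$. I then set $f=\sum_n h_n$ with the bump $h_n(x)=\max\{0,\,1-\rho_n^{-1}d(x,y_n)\}$, so that $h_n(y_n)=1$, $h_n(x_n)=0$ and $\mathrm{supp}\,h_n\subseteq B_{\rho_n}(y_n)$, which for large $n$ lies inside $B^{\infty}_{\varepsilon}(x_n)$. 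Discarding finitely many terms makes the supports pairwise disjoint (two meeting supports would force $d(x_n,x_m)<2\rho_n+2\rho_m<\varepsilon$), so $f$ is a well-defined function with $f(y_n)=1$, $f(x_n)=0$, hence not uniformly continuous. The crucial point, and the main obstacle, is verifying that $f$ is ward continuous: a quasi-Cauchy sequence eventually has all steps $<\varepsilon$, so its tail lies in a single $\varepsilon$-chain component; as distinct $x_n$ lie in distinct components and each bump is confined to one component, the tail meets at most one bump $h_{n_0}$, on which $f$ is $\rho_{n_0}^{-1}$-Lipschitz, forcing $|f(w_{k+1})-f(w_k)|\to0$.

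Condition $(2)$ folds in quickly: $(3)\Rightarrow(2)$ because a pseudo-Cauchy sequence with distinct terms admits a subsequence along which $I\to0$ (each close pair forces small isolation), while $(2)\Rightarrow(1)$ follows by interleaving the $(a_n),(b_n)$ above into a pseudo-Cauchy sequence, applying $(2)$, and rerunning the splicing argument. For the structural conditions I would use Theorem \ref{b} to read ``$X'$ is Bourbaki quasi-precompact in $X$'' as ``every sequence in $X'$ has a Bourbaki quasi-Cauchy subsequence,'' which $(3)$ supplies at once since $I\equiv0$ on $X'$. Then $(3)\Leftrightarrow(6)$ and $(3)\Rightarrow(5)\Rightarrow(4)$ are almost formal: $\inf_U I>0$ trivially gives uniform discreteness of $U$, and for a sequence of isolated points $\inf I=0$ is precisely the negation of the conclusion of $(6)$, so $(3)$ and $(6)$ read off each other through Bourbaki quasi-completeness and Theorem \ref{a}. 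The one delicate structural step is $(4)\Rightarrow(6)$, i.e. recovering $\inf I>0$ from mere uniform discreteness: here I run a dichotomy on $d(x_{n_k},X')$. If this stays near $0$, the points are shadowed by a Bourbaki quasi-Cauchy sequence in the precompact $X'$ (again via Theorem \ref{a}), producing the forbidden subsequence; if it is bounded below, the points witnessing $I(x_{n_k})\to0$ are themselves isolated and, adjoined to the $x_{n_k}$, form an infinite Bourbaki quasi-complete subset disjoint from $X'$ that fails to be uniformly discrete, contradicting $(4)$.

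Finally $(7)$ and $(8)$ attach to $(3)$ through classical Atsuji theory. For $(3)\Rightarrow(8)$, a Bourbaki quasi-complete $A$ meets the sequential $UC$-criterion, since a sequence in $A$ with $I_A\to0$ has $I_X\to0$ and hence a Bourbaki quasi-Cauchy subsequence clustering in $A$; $(8)\Rightarrow(7)$ holds because a discrete $UC$ space must be uniformly discrete; and $(7)\Rightarrow(3)$ is handled by the same distance-to-$X'$ dichotomy used for $(4)\Rightarrow(6)$, where the augmented set of $x_{n_k}$ together with their isolated near-points provides a discrete, Bourbaki quasi-complete, non-uniformly-discrete subset. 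The recurring technical friction throughout is the mismatch between metric separation, the isolation functional $I$, and $\varepsilon$-chain separation; the dichotomy on proximity to $X'$ is exactly what reconciles them.
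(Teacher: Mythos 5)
Your overall architecture is genuinely different from the paper's: the paper proves one cycle $(1)\Rightarrow(2)\Rightarrow(3)\Rightarrow(4)\Rightarrow(5)\Rightarrow(6)\Rightarrow(7)\Rightarrow(8)\Rightarrow(1)$ entirely from first principles, whereas you use $(3)$ as a hub and, for the $UC$ conditions $(7)$ and $(8)$, invoke classical Atsuji theory (the sequential criterion that a metric space is $UC$ iff every sequence along which the isolation functional tends to $0$ clusters, plus the fact that a discrete $UC$ space is uniformly discrete). The analytic core coincides, though: your bump-function construction for $(1)\Rightarrow(3)$ is exactly the paper's construction for $(1)\Rightarrow(2)$ composed with its interleaving step $(2)\Rightarrow(3)$; your splicing device is the same one the paper uses repeatedly; and your dichotomy in $(4)\Rightarrow(6)$ mirrors the case analysis in the paper's proof of $(4)\Rightarrow(5)$ (whether the witnessing points lie in $X^{'}$ or not). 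Your design buys modularity and isolates the one genuinely analytic equivalence $(1)\Leftrightarrow(3)$; the cost is importing the Atsuji criterion as a black box, where the paper stays self-contained.

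There is, however, one concrete gap: $(7)\Rightarrow(3)$. You propose to handle it ``by the same distance-to-$X^{'}$ dichotomy used for $(4)\Rightarrow(6)$,'' but the first branch of that dichotomy (points shadowed by points of $X^{'}$) uses the hypothesis that $X^{'}$ is Bourbaki quasi-precompact in $X$, which is part of $(4)$, $(5)$, $(6)$ but is \emph{not} available from $(7)$; as written, the step fails exactly when $d(x_{n_k},X^{'})\to 0$. The repair lies inside your own toolkit and in fact makes the dichotomy unnecessary: given $(x_n)$ with $I(x_n)\to 0$ and no Bourbaki quasi-Cauchy subsequence, take any witnesses $y_n\neq x_n$ with $d(x_n,y_n)\to 0$ (no isolation required); by splicing, $(y_n)$ has no Bourbaki quasi-Cauchy subsequence either; then $S=\{x_n\}\cup\{y_n\}$ is automatically discrete and Bourbaki quasi-complete in $X$, because any accumulation point of $S$, or any Bourbaki quasi-Cauchy sequence of distinct terms in $S$, would yield (after reordering into increasing indices) a Bourbaki quasi-Cauchy subsequence of $(x_n)$ or of $(y_n)$; yet $S$ is not uniformly discrete, contradicting $(7)$. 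The point is that discreteness of $S$ comes from the absence of Bourbaki quasi-Cauchy subsequences, not from the points being isolated, so no proximity-to-$X^{'}$ analysis is needed here, while it is needed in $(4)\Rightarrow(6)$ where disjointness from $X^{'}$ is part of the hypothesis being applied. Two minor glosses remain, which the paper shares: in $(2)\Rightarrow(1)$ the interleaved sequence must be arranged to have distinct terms (if that is impossible, a point repeats infinitely often and one contradicts ward continuity directly), and the ``reordering'' step above, extracting an honest subsequence of $(x_n)$ from a set-theoretic tail, deserves a sentence via the nested chain-component argument.
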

\begin{proof}
$(1)\Rightarrow(2)$ On the contrary suppose that there is a pseudo-Cauchy sequence of distinct terms in $(X,d)$ which has no Bourbaki quasi-Cauchy subsequence in $X.$ If needed, by passing through a suitable subsequence we may assume that $d(x_{2n-1},x_{2n})<\frac{1}{n}$ for all $n\in \mathbb{N}.$ As $(x_{2n})$ has no Bourbaki quasi-Cauchy subsequence in $X$, it is not Bourbaki quasi-precompact in $X.$ Hence there exists an $\varepsilon >0$ and passing through a subsequence we have $x_{2i}\notin B^{\infty}_{\varepsilon}(x_{2j})$ whenever $i\neq j.$ Choose a sequence of positive real numbers $(\delta_n)$ where $\delta_n=\frac{1}{2}\min\{\varepsilon,d(x_{2n},x_{2n-1})\}.$ Define $f:X\to \mathbb{R}$ by $f(x)=\frac{\delta_n-d(x_{2n},x)}{\delta_n}$ if there exists $n\in \mathbb{N}$ such that $x\in B_{\delta_n}(x_{2n})$ and $f(x)=0,$ otherwise. Note that any quasi-Cauchy sequence in $X$ can share an infinite subsequence with at most one open ball from the family $\{B_{\delta_n}(x_{2n}):n\in \mathbb{N}\}$ and at the same time can only meet finitely many balls of this family. Because otherwise there would exist $x_{2i}$ and $x_{2j}$ $(i\neq j)$ which can be joined by an $\varepsilon$-chain, which contradicts our construction. So for each quasi-Cauchy sequence $(z_n)$ in $X$ there exists $i\in \mathbb{N}$ such that $|f(z_{n+1})-f(z_n)|\leq \frac{1}{\delta_i}d(z_{n+1},z_n)$ for all but finitely many $n$. Therefore $f$ is ward continuous but observe that $f$ cannot be uniformly continuous as $f(x_{2n})=1$ and $f(x_{2n-1})=0$ for all $n\in \mathbb{N}$ which contradicts (1).\par
$(2)\Rightarrow(3)$ Let $(x_n)$ be a sequence for which $I(x_n)\to 0.$ If $(x_n)$ has a constant subsequence then we are done. So we can assume that $x_n$'s are distinct and passing to a subsequence we have $I(x_n)<\frac{1}{n}$ for all $n\in \mathbb{N}.$ Choose $y_n\in X\setminus \{x_n\}$ with $d(x_n,y_n)<\frac{1}{n}.$ Define a new sequence $(z_n)$ by taking $z_{2n}=x_n$ and $z_{2n-1}=y_n$ for all $n\in \mathbb{N}.$ Clearly $(z_n)$ is a pseudo Cauchy sequence of distinct terms which consequently implies that $(x_n)$ has a Bourbaki quasi-Cauchy subsequence in $X.$\par
$(3)\Rightarrow (4)$ Without any loss of generality take a sequence $(x_n)$ of distinct points in $X^{'}$. Clearly $I(x_n)\to 0.$ From the given condition $(x_n)$ has a Bourbaki quasi-Cauchy subsequence in $X$ and so $X^{'}$ is Bourbaki quasi-precompact in $X.$\par
For the next part, suppose that $U\subseteq X$ is an infinite Bourbaki quasi-complete subset of $X$ with $X^{'}\cap U=\phi$. 
 If $U$ is not uniformly discrete, then for each $n\in \mathbb{N}$ there exist $x_n,y_n\in U$ such that $d(x_n,y_n)<\frac{1}{n}.$ Clearly $I(x_n)\to 0.$ So $(x_n)$ has a Bourbaki quasi-Cauchy subsequence in $X$ and from the given condition $(x_n)$ has a cluster point in $U,$ which contradicts that $X^{'}\cap U=\phi.$ Hence $U$ is uniformly discrete.\par
$(4)\Rightarrow(5)$ The first part is obvious. For the second part, on the contrary,  suppose that $inf\{I(x):x\in U\}=0.$ Choose $x_n\in U$ with $I(x_n)<\frac{1}{n},$ and correspondingly a sequence $(y_n)$ such that $y_n\in X\setminus \{x_n\}$ with $d(x_n,y_n)<\frac{1}{n}.$ Let $S=\{y_n:n\in \mathbb{N}\}.$ Evidently $S$ must be infinite, whereas $S\cap X^{'}$ is finite, because if $S\cap X^{'}$ is infinite then $X^{'}$ contains a subsequence of $(y_n).$ As $X^{'}$ is Bourbaki quasi-precompact in $X$ so passing to a subsequence we can conclude that $(x_n)$ is a bourbaki quasi-Cauchy in $X.$ Moreover from the given conditions $(x_n)$ has a cluster point in $U,$ which again contradicts that $U\cap X^{'}=\phi.$ Hence there exists $n_0 \in\mathbb{N}$ such that $y_n\notin X^{'}$ for all $n\geq n_0.$ Note that $(y_n)_{n\geq n_0}$ has no Bourbaki quasi-Cauchy subsequence in $X$ as otherwise $(x_n)$ has a cluster point in $U.$ Therefore $U\cup \{y_n:n\geq n_0\}$ is an infinite Bourbaki quasi-complete subset of $X$, disjoint from $X^{'}$. So it must be uniformly discrete but this is not true as $d(x_n,y_n)\to 0.$ Hence $\inf\{I(x):x\in U\}>0.$\par
$(5)\Rightarrow (6)$  obvious.\par
$(6)\Rightarrow (7)$ Let $A$ be a discrete Bourbaki quasi-complete subset of $X$. $A\cap X^{'}$ must be finite, because otherwise from Bourbaki quasi-precompactness of $X^{'}$ it follows that $A$ has a cluster point in itself, which contradicts that $A$ is discrete. Hence there exists $\delta_1>0$ such that $d(x,A\setminus \{x\})\geq \delta_1$ for all $x\in A\cap X^{'}.$ We claim that $\inf\{I(x):x\in A\setminus X^{'}\}>0.$ If $\inf\{I(x):x\in A\setminus X^{'}\}=0$, then one can find $x_n\in A\setminus X^{'}$ such that $I(x_n)<\frac{1}{n}\ \forall\ n\in\mathbb{N}.$ Moreover $(x_n)$ cannot have a Bourbaki quasi-Cauchy subsequence in $X$, as otherwise by Bourbaki quasi-completeness of $A$, $(x_n)$ has a cluster point in $A$ and this again contradicts that $A$ is discrete. Now by $(6)$ $\inf\{I(x_n):n\in \mathbb{N}\}>0,$ which again contradicts our assumption that $I(x_n)<\frac{1}{n}\ \forall\ n\in\mathbb{N}.$ Hence we can conclude that $\inf\{I(x):x\in A\setminus X^{'}\}>0.$ Let $\inf\{I(x):x\in A\setminus X^{'}\}=\delta_2.$ Taking $\delta=\min\{\delta_1,\delta_2\}$ we observe that $d(x,y)\geq \delta$ whenever $x\neq y$ and $x,y\in A.$ So $A$ is uniformly discrete.\par
$(7)\Rightarrow (8)$ Suppose that $A$ is a Bourbaki quasi-complete subset of $X$ and $f:A\to \mathbb{R}$ is a continuous function. If possible assume that $f$ is not uniformly continuous. Then there exist $\varepsilon >0$ and two sequences $(x_n)$ and $(y_n)$ in $A$ such that $d(x_n,y_n)\to 0$ whereas $|f(x_n)-f(y_n)|\geq \varepsilon\ \forall n\in \mathbb{N}.$ $(x_n)$ and $(y_n)$ have no Bourbaki quasi-Cauchy subsequence in $X$ as otherwise they will have a common cluster point, contradicting the continuity condition of  $f.$ Let $S=\{x_n:n\in \mathbb{N}\}\cup \{y_n:n\in \mathbb{N}\}$. From $(7)$ we can then conclude that $S$ is uniformly discrete but this again contradicts that $d(x_n,y_n)\to 0.$ Hence $f$ is uniformly continuous and this implies that $A$ is an $UC$ space.\par
$(8)\Rightarrow (1)$ Let $f$ be a real valued ward continuous function defined on $X$. On the contrary suppose that $f$ is not uniformly continuous. Then there exist $\varepsilon >0$ and two sequences $(x_n)$ and $(y_n)$ in $X$ such that $d(x_n,y_n)\to 0$ but $|f(x_n)-f(y_n)|\geq \varepsilon\ \forall n\in\mathbb{N}.$ $(x_n)$ and $(y_n)$ have no Bourbaki quasi-Cauchy subsequence in $X$ as otherwise by passing to a subsequence we obtain a quasi-Cauchy sequence $(z_n)$ with the property that $z_{n_k}=x_k$ and $z_{n_k+1}=y_k$ for all $k\in \mathbb{N},$ which contradicts ward continuity of $f.$ Taking $S=\{x_n:n\in \mathbb{N}\}\cup\{y_n:n\in\mathbb{N}\}$, from $(8)$ we can conclude that $S$ is $UC.$ Then clearly restriction of $f$ on $S$ is uniformly continuous which again contradicts our assumption. Hence $f$ must be uniformly continuous.
\end{proof}
Before proceeding to establish our next result, we recall the following definitions.
\begin{itemize}
\item A space $X$ is said to be straight if whenever $X$ is the union of two closed sets, then $f\in C(X)$ is uniformly continuous iff its restriction to each of the closed sets is uniformly continuous \cite{bd1}.
\item Let $(X,d)$ be a metric space. A pair $C^{+},C^{-}$ of closed sets of $X$ is said to be $u$-placed if $d(C^{+}_{\varepsilon}, C^{-}_{\varepsilon}) > 0$ holds for every $\varepsilon>0,$ where $C^{+}_{\varepsilon}=\{x\in C^{+}:d(x,C^{+}\cap C^- ){\geq} \varepsilon\}$ and  $C^{-}_{\varepsilon}=\{x\in C^{-}:d(x,C^{+}\cap C^{-}) \geq \varepsilon\}$ \cite{bd1}.
\item A space $X$ is said to be $W\mbox{-}$straight if whenever $X$ is the union of two closed sets, then $f\in C(X)$ is ward continuous iff its restriction to each of the closed sets is ward continuous \cite{PDSPNA2}.
\item Let $(X,d)$ be a metric space. A pair $C^{+},C^{-}$ of closed sets of $X$ is said to be $qc$-placed if $C^{+}_{\varepsilon}, C^{-}_{\varepsilon}$ do not have any common quasi-Cauchy sequence for every $\varepsilon>0$ \cite{PDSPNA2}.
\end{itemize}
\begin{Theorem}\cite{bd1} A metric space $(X,d)$ is straight iff every pair of closed subsets, which form a cover of $X$, is $u$-placed.
\end{Theorem}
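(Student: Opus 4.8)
The plan is to prove both implications by contraposition, using the pasting lemma to manufacture a counterexample in one direction and a case analysis on the distance to the common part $C^+ \cap C^-$ in the other.

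For the direction ``straight $\Rightarrow$ every covering pair is $u$-placed'', I would argue contrapositively. Suppose $X = C^+ \cup C^-$ with $C^+, C^-$ closed, and that the pair fails to be $u$-placed, so there is $\varepsilon > 0$ with $d(C^+_\varepsilon, C^-_\varepsilon) = 0$; pick $x_n \in C^+_\varepsilon$ and $y_n \in C^-_\varepsilon$ with $d(x_n, y_n) \to 0$. Set $g(x) = d(x, C^+ \cap C^-)$, which is $1$-Lipschitz and hence uniformly continuous, and define $f = g$ on $C^+$ and $f = -g$ on $C^-$. Since $g$ vanishes on $C^+ \cap C^-$, the two formulas agree there, so by the pasting lemma $f \in C(X)$ and its restrictions to $C^+$ and $C^-$ are uniformly continuous. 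But $f(x_n) \geq \varepsilon$ while $f(y_n) \leq -\varepsilon$ with $d(x_n, y_n) \to 0$, so $f$ is not uniformly continuous, violating straightness. The degenerate case $C^+ \cap C^- = \emptyset$ would be handled separately: then $C^+_\varepsilon = C^+$ and $C^-_\varepsilon = C^-$ are disjoint closed sets at distance $0$, and one instead takes $f \equiv 0$ on $C^+$ and $f \equiv 1$ on $C^-$.

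For the converse, I would assume every covering pair is $u$-placed, take $X = C^+ \cup C^-$ closed with $f \in C(X)$ uniformly continuous on each piece, and suppose toward a contradiction that $f$ is not uniformly continuous on $X$. Then there are $\varepsilon > 0$ and sequences with $d(x_n, y_n) \to 0$ but $|f(x_n) - f(y_n)| \geq \varepsilon$. Passing to a subsequence, both $(x_n)$ and $(y_n)$ lie in fixed members of $\{C^+, C^-\}$; if they lie in the same one we contradict uniform continuity of $f$ there, so without loss of generality $x_n \in C^+$ and $y_n \in C^-$. The crux is a case split on the limit of $a_n = d(x_n, C^+ \cap C^-)$. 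Since $|a_n - b_n| \leq d(x_n, y_n) \to 0$ with $b_n = d(y_n, C^+ \cap C^-)$, the two sequences have the same limit. If that common limit is positive (which also subsumes $C^+ \cap C^- = \emptyset$, where both distances are infinite), then for some $\delta > 0$ and all large $n$ we have $x_n \in C^+_\delta$ and $y_n \in C^-_\delta$, so $d(C^+_\delta, C^-_\delta) = 0$ contradicts $u$-placedness. If the common limit is $0$, choose $p_n, q_n \in C^+ \cap C^-$ with $d(x_n, p_n), d(y_n, q_n) \to 0$; uniform continuity of $f|_{C^+}$ gives $|f(x_n) - f(p_n)| \to 0$ and, since $p_n, q_n \in C^+$ with $d(p_n, q_n) \to 0$, also $|f(p_n) - f(q_n)| \to 0$, while $f|_{C^-}$ gives $|f(q_n) - f(y_n)| \to 0$; the triangle inequality then forces $|f(x_n) - f(y_n)| \to 0$, a contradiction.

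I expect the main obstacle to be the converse direction, specifically organizing the case analysis so that the $u$-placed hypothesis is invoked at the right scale $\delta$, and verifying that the subcase where the distance to the intersection tends to $0$ is genuinely defeated by uniform continuity on the two pieces (the bridging estimate $d(p_n, q_n) \to 0$ is the delicate point). Throughout, care is also needed with the degenerate possibility $C^+ \cap C^- = \emptyset$, where distances to the empty set must be interpreted and the argument routed into the positive-limit subcase.
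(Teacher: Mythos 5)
This theorem appears in the paper only as a quoted result from \cite{bd1}; the paper supplies no proof of it, so there is no internal argument to compare yours against. On its own merits, your proof is correct and complete. In the forward direction, the pasted function $f=d(\cdot,C^+\cap C^-)$ on $C^+$ and $f=-d(\cdot,C^+\cap C^-)$ on $C^-$ is well defined (both formulas vanish on the intersection), continuous by the pasting lemma, $1$-Lipschitz on each piece, yet satisfies $f(x_n)\geq\varepsilon$, $f(y_n)\leq-\varepsilon$ with $d(x_n,y_n)\to 0$, which kills straightness; the disjoint-intersection case is rightly treated separately with a locally constant function. In the converse, the pigeonhole reduction to $x_n\in C^+$, $y_n\in C^-$, followed by the dichotomy on $\lim d(x_n,C^+\cap C^-)$ (after passing to a subsequence so the limit exists in $[0,\infty]$), is exactly the right organization: a positive limit places the sequences in $C^+_\delta$ and $C^-_\delta$ and contradicts $u$-placedness, while a zero limit lets you bridge through points $p_n,q_n$ of the intersection using uniform continuity of the two restrictions only. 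The two delicate points you flag --- the scale $\delta$ at which $u$-placedness is invoked, and the convention $d(x,\emptyset)=+\infty$ routing the empty-intersection case into the positive-limit branch --- are both handled correctly. This is essentially the argument of the original source \cite{bd1}, so you have in effect reconstructed the omitted proof rather than found a new route.
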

\begin{Theorem}\cite{PDSPNA2}
$(1)$ Suppose that $(X,d)$ is a metric space and every pair of closed subsets $C^{+},C^{-}$ of $X$, which form a cover of $X$, is $qc$-placed. Then $X$ is $W\mbox{-}$straight.\par
$(2)$ Suppose that $X$ is $W\mbox{-}$straight. Then for any closed cover $(C^+,C^-)$ of $X$ and for any $\varepsilon>0,$ $C_{\varepsilon}^+$ and $C_{\varepsilon}^-$ cannot be connected through a quasi-Cauchy sequence.
\end{Theorem}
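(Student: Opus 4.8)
For part (1) the statement splits into two implications, and the forward one is immediate: if $f$ is ward continuous on $X$ then its restriction to each $C^{\pm}$ is ward continuous, since any quasi-Cauchy sequence lying in a subset is quasi-Cauchy in $X$ (the metric is unchanged). So the plan is to concentrate on the converse. Assume $X=C^{+}\cup C^{-}$ with $C^{\pm}$ closed, take $f\in C(X)$ with $f|_{C^{+}}$ and $f|_{C^{-}}$ ward continuous, and suppose toward a contradiction that $f$ is not ward continuous. I would then fix a quasi-Cauchy sequence $(x_n)$ and $\varepsilon_0>0$ with $|f(x_{m_k+1})-f(x_{m_k})|\ge\varepsilon_0$ along a cofinal set of indices $m_k$; writing $a_k=x_{m_k}$ and $b_k=x_{m_k+1}$, we have $d(a_k,b_k)\to 0$ while the $f$-jump persists.

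The first reduction is to note that $(x_n)$ cannot be eventually contained in $C^{+}$ (nor in $C^{-}$): a tail lying in $C^{+}$ would be a quasi-Cauchy sequence in $C^{+}$ whose image still carries the jumps, contradicting ward continuity of $f|_{C^{+}}$. Hence $(x_n)$ meets both $C^{+}\setminus C^{-}$ and $C^{-}\setminus C^{+}$ infinitely often. The goal is then to extract from $(x_n)$ a quasi-Cauchy sequence with infinitely many terms in $C^{+}_{\delta}$ and infinitely many in $C^{-}_{\delta}$ for some fixed $\delta>0$, i.e.\ a common quasi-Cauchy sequence of $C^{+}_{\delta}$ and $C^{-}_{\delta}$, which directly contradicts the assumed $qc$-placedness of the cover $(C^{+},C^{-})$. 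The crux, and what I expect to be the main obstacle, is to guarantee that the witnessing terms can be taken at distance at least $\delta$ from the overlap $D=C^{+}\cap C^{-}$: one must exclude the possibility that the persistent jumps accumulate arbitrarily close to $D$, and here the mere pointwise continuity of $f$ on $X$ has to be combined carefully with ward continuity of the restrictions to $C^{+}$, $C^{-}$ (and to $D$) to force a uniform separation. Once this separation is in hand, the common quasi-Cauchy sequence is read off from $(x_n)$ and the contradiction closes the argument.

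For part (2) the argument is cleaner and I would give it in full. Assume $X$ is $W$-straight, fix a closed cover $(C^{+},C^{-})$ and $\varepsilon>0$, and suppose for contradiction that $C^{+}_{\varepsilon}$ and $C^{-}_{\varepsilon}$ are connected through a quasi-Cauchy sequence, so there is a quasi-Cauchy $(x_n)$ with a subsequence satisfying $x_{n_k}\in C^{+}_{\varepsilon}$ and $x_{n_k+1}\in C^{-}_{\varepsilon}$. The plan is to build an explicit witness $f\in C(X)$ that violates $W$-straightness, namely $g(x)=\min\{d(x,D),\varepsilon\}$ for $x\in C^{+}$ and $g(x)=-\min\{d(x,D),\varepsilon\}$ for $x\in C^{-}$, where $D=C^{+}\cap C^{-}$. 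The two formulas agree (both equal $0$) on $D$, so by the pasting lemma for closed sets $g$ is continuous on $X$; moreover $g|_{C^{+}}$ and $g|_{C^{-}}$ are $1$-Lipschitz, hence uniformly continuous and a fortiori ward continuous. However $d(x_{n_k},D)\ge\varepsilon$ and $d(x_{n_k+1},D)\ge\varepsilon$ give $g(x_{n_k})=\varepsilon$ and $g(x_{n_k+1})=-\varepsilon$, so $|g(x_{n_k+1})-g(x_{n_k})|=2\varepsilon$ for every $k$ while $(x_n)$ is quasi-Cauchy; thus $g$ is not ward continuous. Since $g$ has ward continuous restrictions to the closed cover $(C^{+},C^{-})$ yet fails to be ward continuous on $X$, this contradicts $W$-straightness and finishes part (2).
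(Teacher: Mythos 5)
The theorem you were asked to prove is not proved in this paper at all: it is imported verbatim from \cite{PDSPNA2} and used as a black box, so your proposal has to stand entirely on its own. Part (2) does stand: the glued witness $g$, equal to $\min\{d(\cdot,D),\varepsilon\}$ on $C^{+}$ and to $-\min\{d(\cdot,D),\varepsilon\}$ on $C^{-}$ with $D=C^{+}\cap C^{-}$, is well defined (both formulas vanish on $D$), continuous by the pasting lemma for a closed cover, and its restrictions are $1$-Lipschitz, hence uniformly continuous, hence ward continuous; the persistent jump $|g(x_{n_k+1})-g(x_{n_k})|=2\varepsilon$ along the quasi-Cauchy sequence then contradicts $W$-straightness. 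This is complete, and it is the natural analogue of the $u$-placed witness construction of \cite{bd1}; only the degenerate case $D=\phi$ (handled by the convention $d(x,\phi)=\infty$, which makes $C^{\pm}$ clopen and $C^{\pm}_{\varepsilon}=C^{\pm}$) is passed over silently.

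Part (1), however, contains a genuine gap, and it is larger than the one you flag. You prove only the automatic implication and then propose to contradict qc-placedness by showing that the quasi-Cauchy sequence $(x_n)$ carrying the jumps of $f$ must visit both $C^{+}_{\delta}$ and $C^{-}_{\delta}$ infinitely often for some fixed $\delta>0$, the admitted ``crux'' being to exclude that the relevant terms accumulate near $D$. But this is not a possibility one can exclude; it is a configuration that must be handled. Concretely, nothing in your reductions prevents the following: every jump pair $(x_{m_k},x_{m_k+1})$ has \emph{both} endpoints in the same piece, say $C^{+}$, while every visit of $(x_n)$ to $C^{-}\setminus C^{+}$ occurs at distance tending to $0$ from $D$. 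In that situation qc-placedness of the given cover yields nothing, since the sequence never enters $C^{-}_{\delta}$ for any fixed $\delta$; and ward continuity of $f|_{C^{+}}$ cannot be invoked directly either, because $(x_n)$ does not lie in $C^{+}$ and --- this is precisely the pathology of quasi-Cauchy sequences that the paper keeps emphasizing --- the jump pairs on their own do not form a quasi-Cauchy sequence, as consecutive pairs may be far apart. A correct proof must do real work at exactly this point, for instance replacing the shallow $C^{-}$-terms by nearby points of $D$ so as to manufacture a quasi-Cauchy sequence lying inside $C^{+}$, applying ward continuity of $f|_{C^{+}}$ to it, and then confronting the fresh jumps this creates between points of $C^{-}\setminus D$ and points of $D$ (both now inside $C^{-}$, where the same obstruction recurs), possibly exploiting that the hypothesis quantifies over \emph{all} closed covers rather than just the given one. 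None of this machinery appears in your proposal, so part (1) remains unproven.
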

\begin{Theorem}
Let $(X,d)$ and $(Y,\rho)$ be two metric spaces. Then the following conditions are equivalent.\par
$(1)$ Every ward continuous function $f:(X,d)\to (Y,\rho)$ is uniformly continuous.\par
$(2)$ Every real valued ward continuous function defined on $X$ is uniformly continuous.\par
$(3)$ Every sequence $(x_n)$ in $X$ satisfying $\displaystyle{\lim_{n\to \infty}}I(x_n)=0$ has a Bourbaki quasi-Cauchy subsequence in $X.$\par
$(4)$ Every $W$-straight subspace of $X$ is straight.\par
$(5)$ Any two subsets $A,B\subset X$ with $d(A,B)=0$ are connected through a quasi-Cauchy sequence.
\end{Theorem}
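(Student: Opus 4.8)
The plan is to split the five statements into a function-theoretic core $\{(1),(2),(3)\}$, which is essentially inherited from the previous theorem, and the two genuinely new conditions $(4)$ and $(5)$, which need fresh sequence constructions together with the quoted characterizations of straight and $W$-straight spaces. For the core, $(1)\Rightarrow(2)$ is trivial (take $Y=\mathbb{R}$), and $(2)\Leftrightarrow(3)$ is literally the equivalence of conditions $(1)$ and $(3)$ in Theorem \ref{d}, so nothing new is required. I would obtain $(3)\Rightarrow(1)$ by transcribing the implication $(8)\Rightarrow(1)$ of Theorem \ref{d} to an arbitrary codomain: if a ward continuous $f\colon X\to Y$ were not uniformly continuous, one gets $\varepsilon>0$ and sequences $(x_n),(y_n)$ with $d(x_n,y_n)\to0$ but $\rho(f(x_n),f(y_n))\ge\varepsilon$; since we may take $x_n\ne y_n$ we have $I(x_n)\to0$, so by $(3)$ a subsequence $(x_{n_k})$ is Bourbaki quasi-Cauchy, hence by Theorem \ref{a} a subsequence of a quasi-Cauchy sequence, and inserting each $y_{n_k}$ right after the matching $x_{n_k}$ yields a quasi-Cauchy sequence whose image fails to be quasi-Cauchy, a contradiction.

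Next I would establish $(3)\Leftrightarrow(5)$. For $(3)\Rightarrow(5)$, given $A,B$ with $d(A,B)=0$ pick $a_n\in A$, $b_n\in B$ with $d(a_n,b_n)\to0$ (if $A\cap B\ne\emptyset$ a constant sequence connects them); then $I(a_n)\le d(a_n,b_n)\to0$, so $(3)$ gives a Bourbaki quasi-Cauchy subsequence $(a_{n_k})$, and the interleaving $\dots,a_{n_k},b_{n_k},a_{n_k},(\tfrac1k\text{-chain})\,a_{n_{k+1}},\dots$ is a quasi-Cauchy sequence realizing the connection. For $(5)\Rightarrow(3)$ I would argue by contradiction: if some $(x_n)$ with $I(x_n)\to0$ had no Bourbaki quasi-Cauchy subsequence, then, as in the converse of Theorem \ref{b}, we may assume $(x_n)$ is uniformly chain discrete with some constant $\delta$ and has distinct terms; choosing $b_n\ne x_n$ with $d(x_n,b_n)\to0$ makes $A=\{x_n\}$, $B=\{b_n\}$ eventually disjoint with $d(A,B)=0$, so $(5)$ supplies a quasi-Cauchy $(w_m)$ connecting them. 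The crux here is a rigidity lemma: a Bourbaki quasi-Cauchy sequence valued in a uniformly chain discrete set is eventually constant (two terms lying in a common $B^\infty_\delta(p)$ are joined by a $\delta$-chain, hence coincide). Applying this to the part of $(w_m)$ inside $A$ and then inspecting the adjacent terms in $B$ produces in every case two distinct points of $A$ joined by a $\delta$-chain, the required contradiction.

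One direction involving $(4)$ is then clean. To prove $(4)\Rightarrow(5)$ contrapositively, from $\neg(5)$ I extract, exactly as above, a uniformly chain discrete $(a_n)$ with near-neighbours $b_n$, $d(a_n,b_n)\to0$, and set $S=\{a_n\}\cup\{b_n\}$. A short distance estimate shows that for the tail the points fall into $\tfrac\delta2$-separated two-point clusters $\{a_n,b_n\}$ of internal diameter tending to $0$; hence every quasi-Cauchy sequence in $S$ is eventually constant, so every function on $S$ is ward continuous and $S$ is trivially $W$-straight, while the cover $(\{a_n\},\{b_n\})$ is not $u$-placed (the function equal to $0$ on the $a$'s and $1$ on the $b$'s has uniformly continuous restrictions but is not uniformly continuous). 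By the cited characterization $S$ is not straight, so $(4)$ fails.

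The implication $(3)\Rightarrow(4)$ is where I expect the real difficulty. Given a $W$-straight subspace $S$ and a closed cover $(C^+,C^-)$, the quoted result forces $C^+_\varepsilon$ and $C^-_\varepsilon$ to be unconnectable by a quasi-Cauchy sequence for every $\varepsilon$, and by the cited characterization straightness of $S$ is equivalent to $d(C^+_\varepsilon,C^-_\varepsilon)>0$ for all $\varepsilon$. The natural attempt is: if $d(C^+_{\varepsilon_0},C^-_{\varepsilon_0})=0$, choose $a_n\in C^+_{\varepsilon_0}$, $b_n\in C^-_{\varepsilon_0}$ with $d(a_n,b_n)\to0$ and use $(3)$ (via $(5)$) to connect them by a quasi-Cauchy sequence, contradicting $W$-straightness. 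The \emph{main obstacle} is that $(3)$ and $(5)$ speak about chains and quasi-Cauchy sequences in the \emph{ambient} space $X$, whereas the connection needed to contradict $W$-straightness of $S$ must live \emph{inside} $S$: the Bourbaki quasi-Cauchy subsequence furnished by $(3)$ may use chains that leave $S$, and there is no automatic way to reroute them through $S$. I therefore expect this step to be the delicate heart of the argument, requiring either a strengthening of the rigidity lemma or an extra interaction between the global hypothesis on $X$ and the structure of $W$-straight subspaces that confines the relevant chains to $S$; securing that confinement is the part I would budget the most effort for.
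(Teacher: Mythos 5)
Your completed implications are all correct, and they overlap heavily with the paper's own route. The paper proves the cycle $(1)\Rightarrow(2)\Rightarrow(3)\Rightarrow(4)\Rightarrow(5)\Rightarrow(1)$: its $(2)\Rightarrow(3)$ is the same appeal to Theorem \ref{d}, its $(5)\Rightarrow(1)$ is your insertion argument, and its $(4)\Rightarrow(5)$ is essentially your construction of the set $S$ of separated two-point clusters, which is $W$-straight (every quasi-Cauchy sequence in $S$ being eventually constant) but not straight. Your extra implications $(3)\Rightarrow(1)$ and $(5)\Rightarrow(3)$, including the rigidity lemma for uniformly chain discrete sets, are sound.

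The gap in your proposal is that you never prove $(3)\Rightarrow(4)$ (or any implication into $(4)$), so the five-way equivalence is not established. But you should know that the paper does not close this gap either: its proof of $(3)\Rightarrow(4)$ is exactly the ``natural attempt'' you describe. It picks $x_n\in C^+_{\varepsilon}$, $y_n\in C^-_{\varepsilon}$ with $d(x_n,y_n)\to 0$, uses $(3)$ and Theorem \ref{a} to get a quasi-Cauchy sequence $(z_k)$ in $X$ containing $(x_n)$ as a subsequence, splices in the $y_n$'s, and declares that the resulting sequence ``contradicts the $W$-straightness of $A$.'' That conclusion is unjustified for precisely the reason you flagged: the spliced sequence lives in $X$, whereas $W$-straightness of $A$ --- via the quoted theorem of \cite{PDSPNA2} applied to the space $A$ --- only rules out connecting $C^+_{\varepsilon}$ and $C^-_{\varepsilon}$ by quasi-Cauchy sequences lying in $A$; the chain points of $(z_k)$ may leave $A$, and nothing reroutes them through $A$.

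Moreover, your suspicion that this confinement cannot be secured is correct in the strongest sense: under the paper's own (intrinsic) definitions of straight and $W$-straight, the implication $(3)\Rightarrow(4)$ is false, so the step cannot be repaired. Take $X=\mathbb{R}$ and $A=\mathbb{N}\cup\{n+\frac{1}{n}:n\geq 2\}$. Since $\mathbb{R}$ is chainable, $B^{\infty}_{\varepsilon}(p)=\mathbb{R}$ for all $\varepsilon>0$ and $p\in\mathbb{R}$, so every sequence in $\mathbb{R}$ is Bourbaki quasi-Cauchy in $\mathbb{R}$ and $(3)$ holds for $\mathbb{R}$ (hence so do $(1)$, $(2)$ and $(5)$). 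In $A$, distinct pairs $\{n,n+\frac{1}{n}\}$ lie at mutual distance at least $\frac{1}{2}$ while their internal diameters tend to $0$, so every quasi-Cauchy sequence in $A$ is eventually constant; consequently every real valued function on $A$ is ward continuous and $A$ is trivially $W$-straight. Yet $A$ is not straight: for the closed cover $C^+=\mathbb{N}$, $C^-=\{n+\frac{1}{n}:n\geq 2\}$, the characteristic function of $C^+$ is continuous on $A$ and uniformly continuous on each piece, but is not uniformly continuous on $A$ because $d(n,n+\frac{1}{n})\to 0$. So $(4)$ fails for $\mathbb{R}$. What is actually true --- and what your proposal proves in full --- is the equivalence $(1)\Leftrightarrow(2)\Leftrightarrow(3)\Leftrightarrow(5)$ together with the one-way implication $(4)\Rightarrow(5)$; condition $(4)$ as written is strictly stronger than the other four.
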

\begin{proof}
$(1)\Rightarrow (2)$ is obvious and $(2)\Rightarrow (3)$ follows from Theorem \ref{d}.\par
$(3)\Rightarrow (4)$ Let $A\subseteq X$ be a $W$-straight space and let $(C^+,C^-)$ be a closed cover of $A.$ If possible, suppose that there exists an $\varepsilon >0$ for which we have $d(C^+_{\varepsilon},C^-_{\varepsilon})=0.$ Then one can find two sequences $(x_n)$ and $(y_n)$ such that $x_n\in C_{\varepsilon}^+$, $y_n\in C_{\varepsilon}^-$ where $d(x_n,y_n)\to 0.$ As $I(x_n)\to 0,$ from the given condition and passing to a subsequence we can assume that $(x_n)$ is Bourbaki quasi-Cauchy in $X$. Consequently there exists a quasi-Cauchy sequence $(z_k)$ such that $z_{k_n}=x_n$ for some subsequence $(z_{k_n})$ of $(z_k).$ Let us define a new sequence $(z^{'}_k)$ by taking $z_{k_n+1}^{'}=y_n$ for all $n\in\mathbb{N}$ and $z^{'}_k=z_k,$ otherwise. Clearly $(z_k^{'})$ is a quasi-Cauchy sequence connecting $C^+_{\varepsilon}$ and $C^-_{\varepsilon}$ which contradicts the $W$-straightness of $A.$ Hence $d(C^+_{\varepsilon},C^-_{\varepsilon})>0$ for all $\varepsilon >0$, which implies that $A$ is straight.\par
$(4)\Rightarrow (5)$ Let $A, B$ be two non empty subsets of $X$ for which $d(A,B)=0.$ On the contrary we assume that $A$ and $B$ cannot be connected through a quasi-Cauchy sequence. Observe that there exist two sequences $(x_n)\subseteq A$ and $(y_n)\subseteq B$ with $d(x_n,y_n)\to 0.$ But $(x_n)$ cannot have any Bourbaki quasi-Cauchy subsequence in $X$, because otherwise passing to a subsequence we would obtain a quasi-Cauchy sequence $(z_k)$ such that $z_{k_n}=x_n$ and $z_{k_n+1}=y_n$ for some subsequence $(z_{k_n})$ of $(z_k).$ This would contradict our assumption that $A,B$ cannot be connected through a quasi-Cauchy sequence. Take $S=\{x_n:n\in \mathbb{N}\}\cup \{y_n:n\in \mathbb{N}\}.$ Now $S$ is $WC$ as every subsequence of a quasi-Cauchy sequence in $S$ is eventually constant and so $W$-straight. But $S$ is not straight as is evident by taking $C^+=\{x_n:n\in \mathbb{N}\}$ and $C^-=\{y_n:n\in \mathbb{N}\},$ and noting that $d(C^+,C^-)=0.$ Hence $A,B$ can be connected through a quasi-Cauchy sequence.\par
$(5)\Rightarrow (1)$ Let $f:X\to Y$ be a ward continuous function. If possible let us  assume that $f$ is not uniformly continuous. Then there exist $\varepsilon>0$ and two sequences $(x_n)$ and $(y_n)$ in $X$ such that $d(x_n,y_n)\to 0$ while  $\rho(f(x_n),f(y_n))\geq \varepsilon$ for all $n\in\mathbb{N}$. Let us take $A=\{x_n:n\in \mathbb{N}\}$ and $B=\{y_n:n\in \mathbb{N}\}.$ By the given condition, $A,B$ can be connected through a quasi-Cauchy sequence. Then we can construct a quasi-Cauchy sequence $(z_n)$ with the property that $z_{n_k}=x_{m_k}$ and $z_{n_k+1}=y_{m_k}$ for some subsequences $(z_{n_k}),(x_{m_k})$ and $(y_{m_k})$ of $(z_n),(x_m)$ and $(y_m)$ respectively. But this contradicts the fact that $f$ is ward continuous. Hence $f$ must be uniformly continuous.
\end{proof}

\section{Quasi-Cauchy Lipschitz functions}
\label{}
In analysis there is a well-known group of continuous functions which is even stronger than uniformly continuous functions, namely Lipschitz functions. In \cite{Beer1,Beer2,Beer3,g1}, Beer, Garrido and Jaramillo considered various kinds of Lipschitz-type functions, the definitions of which are recalled below.
\begin{Definition} Let $(X,d)$ and $(Y,\rho)$ be two metric spaces.
	A function $f:(X,d)\to (Y,\rho)$ is said to be:\par
	$(1)$ Lipschitz if there exists $K>0$ such that $\rho(f(x),f(y))\leq Kd(x,y)\ \forall x,y\in X.$\par
	$(2)$ Lipschitz in the small if there exist $\delta >0$ and $K>0$ such that $\rho(f(x),f(y))\leq Kd(x,y)$ whenever $d(x,y)< \delta.$\par
	$(3)$ Uniformly locally Lipschitz if there exists $\delta>0$ such that for every $x\in X,$ there exists $K_x>0$ with $\rho(f(u),f(w))\leq K_xd(u,w)$ whenever $u,w\in B_{\delta}(x).$\par
	$(4)$ Cauchy-Lipschitz if $f$ is Lipschitz when restricted to the range of each Cauchy sequence $(x_n)$ in $X.$\par
	$(5)$ Locally Lipschitz if for each $x\in X,$ there exists $\delta_x>0$ such that $f$ restricted to $B_{\delta_x}(x)$ is Lipschitz.
	
\end{Definition}
One can immediately note that every Lipschitz in the small function is uniformly locally Lipschitz. Moreover, it is shown in \cite{Beer3} that the collection of all Cauchy-Lipschitz functions is contained in the class of all locally Lipschitz functions which also contains the class of all uniformly locally Lipschitz functions. But the converse implications are not generally true. As our main objective in this article is to ascertain the role of quasi-Cauchy sequences in different spheres, naturally one can ask what would happen if the notion described in $(4)$ above can be modified in terms of quasi-Cauchy sequences and with precisely this in mind, we introduce the following notion.
\begin{Definition}
	A function $f:(X,d)\to (Y,\rho)$ is said to be quasi-Cauchy Lipschitz if for any quasi-Cauchy sequence $(x_n)$ in $X$ there exists $\lambda >0$ such that $\rho(f(x_{k}),f(x_{k+1}))\leq \lambda d(x_{k},x_{k+1})$ for all $k\in \mathbb{N}.$
\end{Definition}
Clearly every quasi-Cauchy Lipschitz function is ward continuous. Our main objective in this section is to study circumstances under which this new type of Lipschitz function coincides with any of the existing notion and secondly to investigate the density position of these functions in the space of ward continuous functions. We start with Lipschitz in the small functions and the following sequential characterization of Lipschitz in the small functions will come handy for our said purpose.
\begin{Lemma}
	A function $f:(X,d)\to (Y,\rho)$ is Lipschitz in the small iff for any two sequences $(x_n)$ and $(y_n)$ with $d(x_n,y_n)\to 0$ there exists $\lambda >0$ such that $\rho(f(x_n),f(y_n))\leq \lambda d(x_n,y_n)$ for all $n\in \mathbb{N}.$
\end{Lemma}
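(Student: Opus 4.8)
The plan is to prove both directions of this equivalence, with the forward implication being almost immediate and the converse requiring a contrapositive argument.

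For the forward direction, suppose $f$ is Lipschitz in the small, so there exist $\delta > 0$ and $K > 0$ with $\rho(f(x),f(y)) \leq K d(x,y)$ whenever $d(x,y) < \delta$. Given any two sequences $(x_n)$ and $(y_n)$ with $d(x_n,y_n) \to 0$, I would choose $n_0$ so that $d(x_n,y_n) < \delta$ for all $n \geq n_0$; on this tail the Lipschitz-in-the-small inequality applies directly with constant $K$. The only subtlety is that the desired conclusion demands the inequality for \emph{all} $n$, including the finitely many indices $n < n_0$ where $d(x_n,y_n)$ may be $\geq \delta$. This is handled by enlarging the constant: for each of the finitely many bad indices the ratio $\rho(f(x_n),f(y_n))/d(x_n,y_n)$ is a finite number (provided $d(x_n,y_n) \neq 0$; if $d(x_n,y_n)=0$ one checks the inequality holds trivially since then $x_n = y_n$ and both sides vanish), so taking $\lambda$ to be the maximum of $K$ and these finitely many ratios yields a single constant working for all $n$.

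For the converse, I would argue by contraposition: assume $f$ is \emph{not} Lipschitz in the small and produce two sequences violating the stated condition. Failure of Lipschitz-in-the-smallness means that for every $\delta > 0$ and every $K > 0$ there exist points within distance $\delta$ whose image-ratio exceeds $K$. The natural move is to set $\delta = K = n$ (or $\delta = 1/n$, $K = n$) and extract, for each $n$, a pair $x_n, y_n$ with $d(x_n,y_n) < 1/n$ yet $\rho(f(x_n),f(y_n)) > n\, d(x_n,y_n)$. Then $d(x_n,y_n) \to 0$ by construction, so these sequences satisfy the hypothesis of the right-hand side; but no finite $\lambda$ can satisfy $\rho(f(x_n),f(y_n)) \leq \lambda d(x_n,y_n)$ for all $n$, since the ratios exceed $n$ and hence are unbounded. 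This contradicts the assumed condition, completing the contrapositive.

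The main obstacle, such as it is, lies in correctly unwinding the quantifiers in the negation of the Lipschitz-in-the-small definition and in being careful about degenerate pairs where $d(x_n,y_n) = 0$. In the forward direction one must ensure the finitely many initial terms do not force the image distance to be positive while the domain distance is zero; this cannot happen because $d(x_n,y_n)=0$ forces $x_n=y_n$ and thus $\rho(f(x_n),f(y_n))=0$. In the converse, one should verify that the extracted pairs genuinely have $d(x_n,y_n) > 0$ (otherwise the strict inequality $\rho(f(x_n),f(y_n)) > n\, d(x_n,y_n)$ would be impossible), which follows automatically from the negation since a violating pair must have positive image distance and hence positive domain distance. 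Once these bookkeeping points are dispatched, both directions are routine.
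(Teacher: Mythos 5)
Your proposal is correct and follows essentially the same route as the paper: in the forward direction, apply the Lipschitz-in-the-small constant on the tail where $d(x_n,y_n)<\delta$ and enlarge it by the finitely many initial ratios (the paper dispatches the $d(x_n,y_n)=0$ case by assuming $x_n\neq y_n$ without loss of generality, where you check it explicitly); in the converse, negate the definition with $\delta=1/n$, $K=n$ to extract pairs with $d(x_n,y_n)\to 0$ and unbounded ratios, exactly as the paper does.
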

\begin{proof}
	Let $f:(X,d)\to (Y,\rho)$ be a Lipschitz in the small function. Then there exist $\delta>0$ and $\lambda >0$ such that $d(x,y)< \delta \Rightarrow \rho(f(x),f(y))\leq \lambda d(x,y).$ Let $(x_n),(y_n)$ be two sequences with $d(x_n,y_n)\to 0.$ Without any loss of generality we can assume that $x_n\neq y_n$ for all $n\in \mathbb{N}.$ Choose $n_0\in \mathbb{N}$ such that $d(x_n,y_n)< \delta$ for all $n\geq n_0$ and consequently $\rho(f(x_n),f(y_n))\leq \lambda d(x_n,y_n)$ for all $n\geq n_0.$ Take $\lambda_0=\max\{\lambda,\displaystyle{\sup_{n<n_0}}\frac{\rho(f(x_n),f(y_n))}{d(x_n,y_n)}\}$. Clearly $\rho(f(x_n),f(y_n))\leq \lambda_0 d(x_n,y_n)$ for all $n\in \mathbb{N}.$\par
	Conversely, suppose that $f$ is not Lipschitz in the small. This means that for each $n\in \mathbb{N},$ $\{\frac{\rho(f(x),f(y))}{d(x,y)}: 0<d(x,y)<\frac{1}{n}\}$ is not bounded. But this implies the existence of two sequences $(x_n)$ and $(y_n)$ with $d(x_n,y_n)\to 0$ satisfying $\frac{\rho(f(x_n),f(y_n))}{d(x_n,y_n)} > n,$ which contradicts the given assumption.
\end{proof}
\begin{Theorem}
	Let $(X,d)$ and $(Y,\rho)$ be two metric spaces. Then\par
	$(1)$ Each Lipschitz in the small function from $X$ to $Y$ is quasi-Cauchy Lipschitz function.\par
	$(2)$ Each quasi-Cauchy Lipschitz function from $X$ to $Y$ is Cauchy-Lipschitz.
\end{Theorem}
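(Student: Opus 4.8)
For part $(1)$ I would simply invoke the sequential characterization of Lipschitz in the small functions established in the preceding Lemma. Given a Lipschitz in the small function $f$ and an arbitrary quasi-Cauchy sequence $(x_n)$ in $X$, set $y_n=x_{n+1}$ and note that $d(x_n,y_n)=d(x_n,x_{n+1})\to 0$ by the quasi-Cauchy condition. The Lemma then furnishes a single constant $\lambda>0$ with $\rho(f(x_n),f(x_{n+1}))=\rho(f(x_n),f(y_n))\le \lambda d(x_n,y_n)=\lambda d(x_n,x_{n+1})$ for every $n$, which is exactly the defining inequality of a quasi-Cauchy Lipschitz function. (Alternatively one argues directly from the $\delta,K$ of the definition: the bound $\rho\le K d$ holds for all but the finitely many consecutive pairs with $d(x_k,x_{k+1})\ge \delta$, and those finitely many ratios are absorbed into $\lambda$.)

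Part $(2)$ is the substantial one, and I would argue by contradiction. Suppose $f$ is quasi-Cauchy Lipschitz but not Cauchy-Lipschitz, so there is a Cauchy sequence $(x_n)$ whose range $S=\{x_n:n\in\mathbb{N}\}$ is such that $f|_S$ is not Lipschitz. Then I can select pairs $u_n\neq v_n$ in $S$ with $\rho(f(u_n),f(v_n))>n\,d(u_n,v_n)$. The plan is to manufacture from these pairs a single quasi-Cauchy sequence in which each pair $(u_n,v_n)$ appears as a pair of consecutive terms; the quasi-Cauchy Lipschitz property applied to that sequence would then supply one constant $\lambda$ bounding every consecutive ratio, contradicting $\rho(f(u_n),f(v_n))/d(u_n,v_n)>n\to\infty$.

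Two reductions make this possible. First I would show that $f$ is bounded on $S$: choosing (as one always can for a Cauchy sequence) a subsequence $(x_{k_j})$ with $\sum_j d(x_{k_j},x_{k_{j+1}})<\infty$, this subsequence is quasi-Cauchy, so quasi-Cauchy Lipschitzness yields $\rho(f(x_{k_1}),f(x_{k_n}))\le \lambda \sum_j d(x_{k_j},x_{k_{j+1}})$, a bound independent of $n$; since any escape of $f$ to infinity along $S$ could be routed through such a summable-gap subsequence, $f$ must stay bounded on $S$. Boundedness then forces the blow-up to come from nearby points, because for pairs with $d(u_n,v_n)$ bounded below the ratio $\rho/d$ is itself bounded. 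Hence, after passing to a subsequence, I may assume $d(u_n,v_n)\to 0$.

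The remaining obstacle, and the technical heart of the proof, is to bridge consecutive pairs (from $v_n$ to $u_{n+1}$) so that the interleaved sequence $u_1,v_1,u_2,v_2,\dots$ is genuinely quasi-Cauchy, i.e. has all consecutive distances tending to $0$. Here I use that the terms of a Cauchy sequence cluster: since $d(u_n,v_n)\to 0$, no single early term can serve as an endpoint of infinitely many pairs, for if $u_n\equiv p$ were fixed then $v_n\to p$ and the sequence $p,v_1,p,v_2,\dots$ would already be quasi-Cauchy and contradict quasi-Cauchy Lipschitzness. By pigeonholing against the finite sets of low-index terms I may therefore pass to a subsequence of pairs whose endpoints all have arbitrarily large index, hence lie in the tail of $(x_n)$. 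For tail indices the Cauchy condition makes $d(v_n,u_{n+1})$ as small as desired, so both the within-pair gaps $d(u_n,v_n)$ and the bridging gaps $d(v_n,u_{n+1})$ tend to $0$. Consequently $u_1,v_1,u_2,v_2,\dots$ is quasi-Cauchy, and applying the definition of quasi-Cauchy Lipschitz to it produces the promised uniform constant $\lambda$ bounding all consecutive ratios, which is the desired contradiction. I expect this bridging step, together with the reduction to nearby pairs, to be where essentially all the work lies, part $(1)$ and the boundedness reduction being comparatively routine.
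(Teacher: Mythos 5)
Your proof is correct and follows essentially the same route as the paper: part $(1)$ is deduced from the sequential characterization in the preceding Lemma, and part $(2)$ proceeds by contradiction, interleaving the pairs witnessing the failure of Lipschitzness on the range of a Cauchy sequence into a single quasi-Cauchy sequence and then invoking the quasi-Cauchy Lipschitz condition. The paper compresses your boundedness reduction, the passage to pairs with $d(u_n,v_n)\to 0$, and the tail-index bridging argument into the single assertion that the bad pairs may be taken as two subsequences $(y_k),(z_k)$ of a Cauchy sequence whose interleaving is ``clearly'' quasi-Cauchy, so your additional steps simply make explicit what the paper leaves implicit.
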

\begin{proof}
	$(1)$ is an immediate consequence of Lemma 4.1. For $(2)$, suppose that $f:(X,d)\to (Y,\rho)$ is a quasi-Cauchy Lipschitz function but not a Cauchy-Lipschitz function. Then there exist a Cauchy sequence $(x_n)$ and two subsequences $(y_{k})$ and $(z_{k})$ of $(x_n)$ such that $\frac{\rho(f(y_{k}),f(z_{k}))}{d(y_{k},z_{k})} > k$ for all $k\in \mathbb{N}.$ Define a new sequence $(w_k)$ by taking $w_{2k}=y_{k}$ and $w_{2k-1}=z_{k}$ for all $k\in \mathbb{N}.$ Clearly $(w_k)$ is quasi-Cauchy and so $f$ is not quasi-Cauchy Lipschitz.
\end{proof}
 The following examples illustrate the place of quasi-Cauchy Lipschitz functions vis a vis the notions mentioned above.
\begin{Example}
	$(1)$ Let $X=\mathbb{N}\cup \{n+\frac{1}{n}:n\in \mathbb{N}\}$ endowed with usual metric of $\mathbb{R}.$ The characteristic function $\chi_{\mathbb{N}}$ of $\mathbb{N}$ on $X$ is quasi-Cauchy Lipschitz but not Lipschitz in the small as $\frac{|\chi_{\mathbb{N}}(n+\frac{1}{n})-\chi_{\mathbb{N}}(n)|}{\frac{1}{n}}=n\to \infty.$ Note that $\chi_{\mathbb{N}}$ is uniformly locally Lipschitz by taking $\delta=\frac{1}{4}$ and $K_{n}=K_{n+\frac{1}{n}}=n$ for every $n\in \mathbb{N}.$\par
	$(2)$  Now let $X=\{\sqrt{n}:n\in \mathbb{N}\}$ with usual metric of $\mathbb{R}.$ Observe that the characteristic function $\chi_{\sqrt{2n}}$ of $\{\sqrt{2n}:n\in \mathbb{N}\}$ on $X$ is Cauchy-Lipschitz but not quasi-Cauchy Lipschitz. Here again $\chi_{\sqrt{2n}}$ is uniformly locally Lipschitz as for each $n\in \mathbb{N},$ $B_1(\sqrt{n})$ contains at most finitely many points.\par
	$(3)$ Consider $X=\{1+\frac{1}{2}+....+\frac{1}{n}:n\in \mathbb{N}\}$ with usual metric of $\mathbb{R}.$ Define $f:X\to \mathbb{R}$ by $f(1+...+\frac{1}{n})=\sqrt{n}.$ Clearly $f$ is ward Continuous But $f$ is not quasi-Cauchy Lipschitz as $\frac{\sqrt{n+1}-\sqrt{n}}{\frac{1}{n+1}}=\frac{n+1}{\sqrt{n+1}+\sqrt{n}},$ which produces an unbounded set of numbers as $n$ ranges over $\mathbb{N}.$\par
	$(4)$ Finally let $X=\{ne_1+\frac{1}{n}e_k:k,n\in \mathbb{N}\}$ equipped with sup norm of $\ell^{\infty}$, where $(e_k)$ is the usual basis of $\ell^{\infty}$. Define $f:X\to \mathbb{R}$ by $f(ne_1+\frac{1}{n}e_k)=n^k.$ Note that $f$ is quasi-Cauchy Lipschitz as every quasi-Cauchy sequence is eventually constant. But $f$ is not uniformly locally Lipschitz as $f$ is unbounded on the sets $\{ne_1+\frac{1}{n}e_k:k\in \mathbb{N}\}$ of diameter $\frac{1}{n}.$
\end{Example}
\begin{Remark}
	From above examples we can conclude that set of all quasi-Cauchy Lipschitz functions and the set of all uniformly locally Lipschitz functions both properly contain the set of all Lipschitz in the small functions and on the other side are themselves contained in the set of all Cauchy-Lipschitz functions. Moreover these two classes neither coincide, nor their intersection coincides with the class of Lipschitz in the small functions. Further there exists a Cauchy-Lipschitz function which is neither quasi-Cauchy Lipschitz nor uniformly locally Lipschitz. Consider $X=\{\sqrt{n}e_1+\frac{1}{n}e_k:k,n\in \mathbb{N}\}$ equipped with sup norm of $\ell^{\infty}.$ Define $f:X\to \mathbb{R}$ by $f(\sqrt{n}e_1+\frac{1}{n}e_k)=n^k.$ It can be easily seen that $f$ is the required example.
\end{Remark}
We have already discussed about certain conditions under which ward continuity coincides with other types of continuity in the last section. Now we will show that quasi-Cauchy Lipschitz function coincides with other types of Lipschitz functions under the same conditions. Before going to the results we recall a lemma from \cite{Beer3}.
\begin{Lemma}\cite{Beer3}
	Let $(X,d)$ and $(Y,\rho)$ be two metric spaces. Then $f:X\to Y$ is locally Lipschitz iff the restriction of $f$ to the range of each convergent sequence in $X$ is Lipschitz.
\end{Lemma}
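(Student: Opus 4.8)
The plan is to treat the two implications separately, reducing the forward direction to the classical fact that a locally Lipschitz map on a compact set is globally Lipschitz there, and handling the converse by a contrapositive construction.

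For the forward implication, I would first observe that if $(x_n)$ is a convergent sequence in $X$ with $x_n \to x_0$, then $K = \{x_n : n \in \mathbb{N}\} \cup \{x_0\}$ is compact. Since $f$ is locally Lipschitz on $X$, for each $p \in K$ there is a ball $B_{\delta_p}(p)$ on which $f$ is Lipschitz, so $f|_K$ is locally Lipschitz as a map on the compact subspace $K$. The key step is then to show that local Lipschitzness on the compact set $K$ forces $f|_K$ to be globally Lipschitz; restricting the resulting estimate to the range $\{x_n\} \subseteq K$ finishes this direction.

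The compactness argument, which I expect to be the main obstacle, proceeds by splitting pairs into near and far. Cover $K$ by $\{B_{\delta_p/2}(p) : p \in K\}$ and extract a finite subcover $B_{\delta_{p_1}/2}(p_1), \ldots, B_{\delta_{p_m}/2}(p_m)$ with local Lipschitz constants $L_1, \ldots, L_m$; put $r = \frac{1}{2}\min_i \delta_{p_i}$ and $L = \max_i L_i$. For $a, b \in K$ with $d(a,b) < r$, some $B_{\delta_{p_i}/2}(p_i)$ contains $a$, and the triangle inequality places $b$ in $B_{\delta_{p_i}}(p_i)$, so $\rho(f(a),f(b)) \le L\, d(a,b)$. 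For $a, b \in K$ with $d(a,b) \ge r$, I would use that $f$ is continuous (local Lipschitzness implies continuity) together with compactness of $K$ to get that $f(K)$ has finite diameter $M$, whence $\rho(f(a),f(b)) \le M \le (M/r)\, d(a,b)$. Taking $\lambda = \max\{L, M/r\}$ then shows $f|_K$ is $\lambda$-Lipschitz.

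For the converse I would argue contrapositively. If $f$ is not locally Lipschitz, there is a point $x_0$ such that for every $n$ the restriction of $f$ to $B_{1/n}(x_0)$ fails to be Lipschitz; hence one can choose $u_n \ne v_n$ in $B_{1/n}(x_0)$ with $\rho(f(u_n),f(v_n)) > n\, d(u_n, v_n)$. Interleaving these via $w_{2n-1} = u_n$, $w_{2n} = v_n$ produces a convergent sequence (since $u_n, v_n \to x_0$) whose range contains the pairs $(u_n, v_n)$ with difference quotients exceeding $n$; thus $f$ restricted to that range cannot be Lipschitz, contradicting the hypothesis. Therefore $f$ must be locally Lipschitz.
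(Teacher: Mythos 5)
Your proof is correct and complete. Note that the paper itself offers no proof of this lemma---it is imported from \cite{Beer3}---so there is no internal argument to compare against; your write-up supplies a self-contained proof along the standard lines. The forward direction correctly reduces to the fact that a locally Lipschitz map on a compact set (here the range of the sequence together with its limit) is globally Lipschitz, and your finite-subcover near/far splitting establishes that fact cleanly: for $d(a,b)<r$ the triangle inequality puts both points in a single ball $B_{\delta_{p_i}}(p_i)$, and for $d(a,b)\geq r$ the bound $\rho(f(a),f(b))\leq M \leq (M/r)\,d(a,b)$ works because $f(K)$ is bounded. The converse is the usual interleaving construction, and your choice of $u_n\neq v_n$ in $B_{1/n}(x_0)$ with $\rho(f(u_n),f(v_n))>n\,d(u_n,v_n)$ does produce a convergent sequence whose range witnesses the failure of Lipschitzness. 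One small remark: the compactness step admits a shorter contradiction argument---if $\rho(f(a_n),f(b_n))>n\,d(a_n,b_n)$ with $a_n,b_n\in K$, then boundedness of $f(K)$ forces $d(a_n,b_n)\to 0$, a convergent subsequence gives a common limit point, and the local Lipschitz constant at that point contradicts the inequality for large $n$---but your covering argument is equally valid and has the merit of producing an explicit Lipschitz constant.
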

\begin{Theorem}
	Let $(X,d)$ and $(Y,\rho)$ be two metric spaces. Then the following statements are equivalent.\par
	$(1)$ Every locally Lipschitz function $f:(X,d)\to (Y,\rho)$ is quasi-Cauchy Lipschitz.\par
	$(2)$ Every real valued locally Lipschitz function defined on $X$ is quasi-Cauchy Lipschitz.\par
	$(3)$ $X$ is Bourbaki quasi-complete.
\end{Theorem}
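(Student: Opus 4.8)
The plan is to establish the cycle $(1)\Rightarrow(2)\Rightarrow(3)\Rightarrow(1)$. The implication $(1)\Rightarrow(2)$ is immediate, being the special case $Y=\mathbb{R}$, so the substance lies in $(3)\Rightarrow(1)$ and in the contrapositive of $(2)\Rightarrow(3)$. Throughout I will lean on Theorem \ref{a} (a sequence is Bourbaki quasi-Cauchy iff it is a subsequence of a quasi-Cauchy sequence), which is exactly the bridge between the quasi-Cauchy hypotheses and the Bourbaki quasi-completeness condition.

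For $(3)\Rightarrow(1)$ I would argue by contradiction. Suppose $f$ is locally Lipschitz but not quasi-Cauchy Lipschitz; then there is a quasi-Cauchy sequence $(x_n)$ and indices $(k_j)$ with $\frac{\rho(f(x_{k_j}),f(x_{k_j+1}))}{d(x_{k_j},x_{k_j+1})}\to\infty$, while $d(x_{k_j},x_{k_j+1})\to0$ by the quasi-Cauchy property. The subsequence $(x_{k_j})$ is Bourbaki quasi-Cauchy in $X$ (being a subsequence of the quasi-Cauchy sequence $(x_n)$, by Theorem \ref{a}), so by $(3)$ it has a cluster point $p\in X$; passing to a further subsequence gives $x_{k_{j_i}}\to p$, and since $d(x_{k_j},x_{k_j+1})\to0$ also $x_{k_{j_i}+1}\to p$. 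Local Lipschitzness now furnishes $\delta_p>0$ and $K_p>0$ with $f|_{B_{\delta_p}(p)}$ being $K_p$-Lipschitz; for large $i$ both $x_{k_{j_i}}$ and $x_{k_{j_i}+1}$ lie in $B_{\delta_p}(p)$, so the ratio there is at most $K_p$, contradicting its divergence. (Equivalently one could apply the recalled lemma of \cite{Beer3} to the convergent sequence obtained by interleaving $(x_{k_{j_i}})$ and $(x_{k_{j_i}+1})$.)

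For $(2)\Rightarrow(3)$ I would prove the contrapositive by constructing a real valued locally Lipschitz function that is not quasi-Cauchy Lipschitz. If $X$ is not Bourbaki quasi-complete there is a sequence $(x_k)$, Bourbaki quasi-Cauchy in $X$, with no cluster point; passing to a subsequence I take its terms distinct. By Theorem \ref{a} it sits inside a quasi-Cauchy sequence $(z_n)$ with $z_{n_k}=x_k$. Setting $a_k=z_{n_k}$ and $b_k=z_{n_k+1}$ — and, where a step happens to vanish, shifting to the next genuinely moving step along $(z_n)$, which must occur since the $x_k$ are distinct — I obtain consecutive terms of $(z_n)$ with $a_k\neq b_k$ and $d(a_k,b_k)\to0$; crucially, using $d(a_k,b_k)\to0$ together with the absence of a cluster point for $(x_k)$, neither $(a_k)$ nor $(b_k)$ has a cluster point, so $S=\{a_k:k\}\cup\{b_k:k\}$ is closed and discrete with each point isolated at positive distance from the rest. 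I would then pick radii $r_k\downarrow0$ with $2r_k$ smaller than both $d(a_k,b_k)$ and the distance from $b_k$ to $S\setminus\{b_k\}$, so the balls $B_{r_k}(b_k)$ are pairwise disjoint and avoid every $a_j$, and define the tent sum $f(x)=\sum_k \sqrt{d(a_k,b_k)}\,\max\{0,\,1-d(x,b_k)/r_k\}$. Then $f(a_k)=0$ and $f(b_k)=\sqrt{d(a_k,b_k)}$, so along the quasi-Cauchy sequence $(z_n)$ the difference quotient $\frac{|f(b_k)-f(a_k)|}{d(a_k,b_k)}=\frac{1}{\sqrt{d(a_k,b_k)}}\to\infty$, defeating every Lipschitz constant and showing $f$ is not quasi-Cauchy Lipschitz.

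The main obstacle is verifying that this $f$ is genuinely locally Lipschitz at \emph{every} point: in a general metric space a sequence with no cluster point need not leave bounded sets, so one cannot argue merely by boundedness. Instead I would fix $x_0\in X$ and use that $b_k\not\to x_0$ to obtain $\delta>0$ with $b_k\in B_\delta(x_0)$ for only finitely many $k$; combining this with $r_k\downarrow0$ (so only finitely many supports have $r_k\geq\delta/2$) shows that at most finitely many tents are nonzero on $B_{\delta/2}(x_0)$. There $f$ is a finite sum of Lipschitz functions, hence Lipschitz, so $f$ is locally Lipschitz and contradicts $(2)$, completing the cycle.
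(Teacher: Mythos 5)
Your cycle $(1)\Rightarrow(2)\Rightarrow(3)\Rightarrow(1)$ is exactly the paper's decomposition, and two of the three legs match the paper's proof: $(1)\Rightarrow(2)$ is the same triviality, and your $(3)\Rightarrow(1)$ is the paper's argument (you apply the local Lipschitz condition directly at the cluster point, where the paper invokes the recalled lemma of \cite{Beer3} on convergent sequences; these are interchangeable, and you note as much). Your $(2)\Rightarrow(3)$ is also the paper's strategy in spirit -- a locally finite sum of tent functions supported near the terms of the cluster-point-free sequence, with heights chosen so that the difference quotients along the quasi-Cauchy sequence $(z_n)$ blow up -- and your verification of local Lipschitzness (radii $r_k\downarrow 0$ plus the fact that no point of $X$ is a cluster point of $(b_k)$) is actually more careful than the paper's.

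However, one stated step fails: ``the balls $B_{r_k}(b_k)$ are pairwise disjoint and avoid every $a_j$.'' Your radius condition only keeps $B_{r_k}(b_k)$ away from $S\setminus\{b_k\}$, so it gives nothing when $b_k$ coincides, as a point of $X$, with some $a_j$ or with another $b_{k'}$. This is not a vacuous worry: take $X=\{1+\frac{1}{2}+\cdots+\frac{1}{n}:n\in\mathbb{N}\}$ with the usual metric and let $(x_k)$ enumerate $X$ increasingly; this is Bourbaki quasi-Cauchy in $X$ with no cluster point, and the natural choice $(z_n)=(x_n)$ forces $b_k=a_{k+1}$ for every $k$. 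Then $f(a_{k+1})=\sqrt{d(a_k,b_k)}$ rather than $0$, the quotients become $\frac{|f(b_k)-f(a_k)|}{d(a_k,b_k)}=(k+1)\left|\frac{1}{\sqrt{k+1}}-\frac{1}{\sqrt{k}}\right|\to 0$, and in fact one checks the resulting $f$ is globally Lipschitz on $X$ -- so it witnesses nothing. The repair is a preliminary reduction: since neither $(a_k)$ nor $(b_k)$ has a cluster point, each point of $X$ occurs only finitely often among all the $a_k$'s and $b_k$'s, so a greedy extraction yields a subsequence of pairs whose points are pairwise distinct; these pairs are still consecutive terms of the quasi-Cauchy sequence $(z_n)$, so after the reduction your construction and the divergence $\frac{1}{\sqrt{d(a_k,b_k)}}\to\infty$ go through verbatim. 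For what it is worth, the paper's own proof of $(2)\Rightarrow(3)$ needs the same reduction and omits it: its heights $2k-1,2k$ at $y_{2k-1}=z_{n_k}$, $y_{2k}=z_{n_k+1}$ are inconsistently assigned, and its radii $\delta_k=\frac{1}{4}d(y_k,\{y_n:n\neq k\})$ degenerate to $0$, precisely when the $y_k$ are not distinct points, as happens in the example above.
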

\begin{proof}
    $(1)\Rightarrow (2)$ straightforward.\par
	$(2)\Rightarrow (3)$ On the contrary, suppose that there is a Bourbaki quasi-Cauchy sequence $(x_k)$ which has no cluster point in $X$. By Theorem \ref{a} one can find a quasi-Cauchy sequence $(z_n)$ such that $z_{n_k}=x_k$ for some subsequence $(z_{n_k})$ of $(z_n).$ Evidently $(z_{n_k+1})$ has no cluster point. Now we define a new sequence $(y_k)$ by taking $y_{2k-1}=z_{n_k}$ and $y_{2k}=z_{n_k+1}$ for each $k.$ Clearly $(y_k)$ has no cluster point. This fact allows us to choose a sequence of positive real numbers $(\delta_k),$ where $\delta_k=\frac{1}{4}d(y_k,\{y_n:n\neq k\}).$ From this construction it immediately follows that the distance between two open balls $B_{\delta_i}(y_i)$ and $B_{\delta_j}(y_j)$ is positive whenever $i\neq j.$ As a result, for each $x\in X$ there exists $\delta_x >0$ such that $B_{\delta_x}(x)$ intersects at most one member from the family $\{B_{\delta_{k}}(y_{k}):k\in \mathbb{N}\}.$ Define $f:X\to \mathbb{R}$ by $f(x)=k-\frac{k}{\delta_{k}}d(x,y_{k})$ if there exists some $k$ with $d(x,y_{k})<\delta_{k}$ and $f(x)=0,$ otherwise. Note that for each $x\in X$, $f$ is Lipschitz on $B_{\delta_x}(x)$ which implies that $f$ is locally Lipschitz. But $f$ cannot be quasi-Cauchy Lipschitz as $|f(z_{n_k+1})-f(z_{n_k})|=|2k-2k+1|=1,$ where $(z_n)$ is a quasi-Cauchy sequence.\par
	$(3)\Rightarrow (1)$ If possible suppose that $f:(X,d)\to (Y,\rho)$ is a locally Lipschitz function which is not quasi-Cauchy Lipschitz. Then there exist a quasi-Cauchy sequence $(x_n)$ and a subsequence $(x_{n_k})$ of $(x_n)$ such that $\frac{\rho(f(x_{n_k+1}),f(x_{n_k}))}{d(x_{n_k+1},x_{n_k})}\geq k$ for each $k\in \mathbb{N}.$ Now being a Bourbaki quasi-Cauchy sequence, $(x_{n_k})$ must have a cluster point in $X$. By passing to a subsequence we can assume that $(x_{n_k})$ is convergent and consequently $(x_{n_k+1})$ also converges to the same limit. But note that the restriction of $f$ cannot be Lipschitz to the range of the convergent sequence $(x_{n_k})\cup (x_{n_k+1})$ which contradicts that $f$ is locally Lipschitz.
\end{proof}
\begin{Theorem}
	Let $(X,d)$ and $(Y,\rho)$ be two metric spaces. Then the following conditions are equivalent.\par
	$(1)$ Every Cauchy Lipschitz function $f:(X,d)\to (Y,\rho)$ is quasi-Cauchy Lipschitz.\par
	$(2)$ Every real valued Cauchy Lipschitz function defined on $X$ is quasi-Cauchy Lipschitz.\par
	$(3)$ Every Bourbaki quasi-Cauchy sequence in $X$ has a Cauchy subsequence.
\end{Theorem}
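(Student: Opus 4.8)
The plan is to prove the three conditions equivalent in the cyclic pattern $(1)\Rightarrow(2)\Rightarrow(3)\Rightarrow(1)$, mirroring the structure of the two preceding theorems in this section. The implication $(1)\Rightarrow(2)$ is immediate, since the real line is a particular target space $Y$, so a Cauchy-Lipschitz real-valued function is a special case covered by $(1)$.

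For $(2)\Rightarrow(3)$, I would argue by contraposition. Suppose some Bourbaki quasi-Cauchy sequence $(x_k)$ in $X$ has no Cauchy subsequence. By Theorem \ref{a}, $(x_k)$ sits inside a quasi-Cauchy sequence $(z_n)$ with $z_{n_k}=x_k$ for a subsequence $(z_{n_k})$. The key observation is that $(z_{n_k})$, and hence the shifted $(z_{n_k+1})$, also has no Cauchy subsequence. I would then build a real-valued function that is Cauchy-Lipschitz (vacuously or with a uniform constant, because no genuinely ``compressed'' Cauchy sequence survives) yet fails quasi-Cauchy Lipschitzness on the quasi-Cauchy sequence $(z_n)$: concretely, separate the terms $z_{n_k}$ and $z_{n_k+1}$ by assigning values whose difference does not decay in proportion to $d(z_{n_k},z_{n_k+1})$. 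The same device used in the $(2)\Rightarrow(3)$ step of the previous theorem—setting $f(z_{n_k})=0$ and $f(z_{n_k+1})=1$ and invoking a Tietze-type extension—adapts here, the point being that on Cauchy sequences such an $f$ restricts to a function with bounded difference quotients (because no Cauchy sequence can approach two distinct level sets), so $f$ is Cauchy-Lipschitz, while along $(z_n)$ the ratio $\rho(f(z_{n_k+1}),f(z_{n_k}))/d(z_{n_k+1},z_{n_k})$ blows up.

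For $(3)\Rightarrow(1)$, let $f:(X,d)\to(Y,\rho)$ be Cauchy-Lipschitz and suppose toward a contradiction that $f$ is not quasi-Cauchy Lipschitz. Then there is a quasi-Cauchy sequence $(x_n)$ and a subsequence $(x_{n_k})$ with $\rho(f(x_{n_k+1}),f(x_{n_k}))/d(x_{n_k+1},x_{n_k})\geq k$ for every $k$. Since $(x_{n_k})$ is Bourbaki quasi-Cauchy (being a subsequence of the quasi-Cauchy $(x_n)$), hypothesis $(3)$ hands me a Cauchy subsequence of $(x_{n_k})$; passing to it, and noting $d(x_{n_k+1},x_{n_k})\to 0$ along a quasi-Cauchy sequence so that $(x_{n_k+1})$ shares the same limit behaviour, I obtain a single Cauchy sequence whose range contains both families. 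The restriction of $f$ to that range then has unbounded difference quotients, contradicting Cauchy-Lipschitzness.

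The main obstacle I anticipate is the careful bookkeeping in $(2)\Rightarrow(3)$: one must verify that the constructed $f$ really is Cauchy-Lipschitz, which requires showing that \emph{every} Cauchy sequence in $X$ sees only finitely many of the separated pairs (so the difference quotients stay bounded), and this rests precisely on the assumption that $(x_k)$ admits no Cauchy subsequence. Interleaving the two subsequences into a single Cauchy sequence in $(3)\Rightarrow(1)$ is routine but should be done explicitly, since quasi-Cauchy subsequences do not inherit the clean behaviour of Cauchy ones.
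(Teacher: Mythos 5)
Your implications $(1)\Rightarrow(2)$ and $(3)\Rightarrow(1)$ are fine and essentially coincide with the paper's proof: for $(3)\Rightarrow(1)$ the paper likewise notes that $(x_{n_k})$ is Bourbaki quasi-Cauchy, that if it had a Cauchy subsequence the interleaving of $(x_{n_k})$ with $(x_{n_k+1})$ would be a single Cauchy sequence (since $d(x_{n_k+1},x_{n_k})\to 0$), and that $f$ cannot be Lipschitz on the range of that interleaved sequence, contradicting Cauchy-Lipschitzness.

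The genuine gap is in $(2)\Rightarrow(3)$. The Tietze-extension device you import from the Cauchy-continuity theorem cannot work here: Tietze produces a merely \emph{continuous} extension, and continuity of the extension (even viewed on the completion $\widehat{X}$) yields Cauchy continuity but no Lipschitz control whatsoever. Cauchy-Lipschitzness is a quantitative condition on \emph{every} Cauchy sequence of $X$, including sequences whose ranges are disjoint from your two sets $A=\{z_{n_k}\}$ and $B=\{z_{n_k+1}\}$; a continuous function can easily fail it (for instance $x\mapsto\sqrt{x}$ on $\{1/n:n\in\mathbb{N}\}\cup\{0\}$ is continuous but not Lipschitz on the range of the Cauchy sequence $(1/n)$). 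Your parenthetical justification that ``no Cauchy sequence can approach two distinct level sets'' only controls Cauchy sequences meeting $A\cup B$, and says nothing about the behaviour of the extension elsewhere; moreover no extension theorem preserving Cauchy-Lipschitzness is available, since $f|_{A\cup B}$ itself is not Lipschitz (the quotients $1/d(z_{n_k},z_{n_k+1})$ blow up), ruling out a McShane-type extension. The paper avoids this by building the bad function explicitly and globally: since $(x_k)$ has no Cauchy subsequence, one may pass to a subsequence with $d(x_i,x_j)\geq\delta$ for $i\neq j$, and then define $f(x)=k-\frac{4k}{\delta}d(x_k,x)$ on $B_{\delta/4}(x_k)$ and $f(x)=0$ otherwise. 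Each spike vanishes at the boundary of its ball and the balls are uniformly separated, so any Cauchy sequence meets only finitely many of them and shares an infinite subsequence with at most one; hence $f$ is Cauchy-Lipschitz. Yet along the quasi-Cauchy sequence $(z_n)$ provided by Theorem \ref{a} the difference quotient between $z_{n_k}$ and $z_{n_k+1}$ equals $\frac{4k}{\delta}\to\infty$, so $f$ is not quasi-Cauchy Lipschitz. This ``growing spikes on uniformly separated balls'' construction is exactly what must replace the Tietze step in your argument.
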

\begin{proof}
	$(1)\Rightarrow (2)$ straightforward.\par
	$(2)\Rightarrow (3)$ On the contrary, suppose that there is a Bourbaki quasi-Cauchy sequence $(x_k)$ which has no Cauchy subsequence. Therefore one can choose a $\delta >0$ and passing to a subsequence, we will have $d(x_i,x_j)\geq \delta$ whenever $i\neq j.$ Note that by Theorem \ref{a} one can find a quasi-Cauchy sequence $(z_n)$ such that $z_{n_k}=x_k$ for some subsequence $(z_{n_k})$ of $(z_n).$ Now define $f:X\to \mathbb{R}$ by $f(x)=k-\frac{4k}{\delta}d(x_k,x)$ if $\exists k\in \mathbb{N}$ such that $x\in B_{\frac{\delta}{4}}(x_k)$, and $f(x)=0,$ otherwise. Clearly $f$ is Cauchy-Lipschitz as any Cauchy sequence can share an infinite subsequence with at most one open ball from the family $\{B_{\frac{\delta}{4}}(x_k):k\in \mathbb{N}\}$ and can only meet finitely many such balls. But $f$ fails to be quasi-Cauchy Lipschitz as $\frac{|f(z_{n_k})-f(z_{n_k+1})|}{d(z_{n_k},z_{n_k+1})}=\frac{4k}{\delta}$ for all but finitely many $k$.\par
	$(3)\Rightarrow (1)$ Let $f:(X,d)\to (Y,\rho)$ be a Cauchy-Lipschitz function. Suppose that $f$ is not quasi-Cauchy Lipschitz. Then there exists a quasi-Cauchy sequence $(x_n)$ such that $\frac{\rho(f(x_{n_k+1}),f(x_{n_k}))}{d(x_{n_k+1},x_{n_k})}\geq k$ for some subsequence $(x_{n_k})$ of $(x_n).$ Now $(x_{n_k})$ cannot have any Cauchy subsequence as $f$ cannot be Lipschitz to the range of the sequence $(x_{n_k})\cup (x_{n_k+1}).$ This contradicts the given condition as $(x_{n_k})$ is Bourbaki quasi-Cauchy and so we can conclude that $f$ must be quasi-Cauchy Lipschitz.
\end{proof}
\begin{Theorem}
	Let $(X,d)$ and $(Y,\rho)$ be two metric spaces. Then the following conditions are equivalent.\par
	$(1)$ Every quasi-Cauchy Lipschitz function $f:X\to Y$ is Lipschitz in the small.\par
	$(2)$ Every real valued quasi-Cauchy Lipschitz function $f$ defined on $X$ is Lipschitz in the small.\par
	$(3)$ Every sequence $(x_n)$ in $X$ satisfying $\displaystyle{\lim_{n\to \infty}}I(x_n)=0$ has a Bourbaki quasi-Cauchy subsequence in $X.$
\end{Theorem}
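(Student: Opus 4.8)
The plan is to prove the cycle $(1)\Rightarrow(2)\Rightarrow(3)\Rightarrow(1)$. The first implication $(1)\Rightarrow(2)$ is immediate, since real valued functions arise by taking $Y=\mathbb{R}$, exactly as in the three preceding theorems. The two remaining implications I would both handle by contraposition, each time manufacturing an explicit quasi-Cauchy sequence, and the genuine content of the theorem sits there.

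For $(2)\Rightarrow(3)$ I would assume $(3)$ fails and build a real valued quasi-Cauchy Lipschitz function that is not Lipschitz in the small. So take a sequence $(x_n)$ with $I(x_n)\to 0$ possessing no Bourbaki quasi-Cauchy subsequence in $X$. First I would pass to a subsequence, extracting (via the converse argument of Theorem \ref{b}) an $\varepsilon>0$ with $x_i\notin B^{\infty}_{\varepsilon}(x_j)$ for all $i\neq j$, and simultaneously choosing witnesses $y_n\neq x_n$ with $d(x_n,y_n)<\frac{1}{n}$. Then, echoing the construction in the proof of $(1)\Rightarrow(2)$ of Theorem \ref{d}, I would set $\delta_n=\frac{1}{2}\min\{\varepsilon,d(x_n,y_n)\}$ and define $f$ to be the tent $1-d(x,x_n)/\delta_n$ on each ball $B_{\delta_n}(x_n)$ and $0$ elsewhere; the balls are pairwise disjoint since their centres are at least $\varepsilon$ apart. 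The point is that each single tent $g_n(x)=\max\{0,\,1-d(x,x_n)/\delta_n\}$ is globally $\frac{1}{\delta_n}$-Lipschitz, while the $\varepsilon$-chain separation of the centres forces any quasi-Cauchy sequence --- whose consecutive gaps are eventually $<\varepsilon$ --- to meet at most one of the balls $B_{\delta_n}(x_n)$ from some index onward, as otherwise two centres would be joined by an $\varepsilon$-chain. Hence a single constant $\frac{1}{\delta_n}$ (increased by the finitely many initial ratios) bounds all consecutive increments of $f$ along that sequence, so $f$ is quasi-Cauchy Lipschitz. On the other hand $f(x_n)=1$ and, since $y_n$ lies near $x_n$ while the other bumps are $\varepsilon$-separated, $f(y_n)=0$ for large $n$; as $d(x_n,y_n)\to 0$ we get $\frac{|f(x_n)-f(y_n)|}{d(x_n,y_n)}\to\infty$, so $f$ is not Lipschitz in the small by Lemma 4.1, contradicting $(2)$.

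For $(3)\Rightarrow(1)$ I would again use the contrapositive together with Lemma 4.1. Let $f:X\to Y$ be quasi-Cauchy Lipschitz and suppose it fails to be Lipschitz in the small; then Lemma 4.1 supplies sequences $(x_n),(y_n)$ with $d(x_n,y_n)\to 0$ and $\frac{\rho(f(x_n),f(y_n))}{d(x_n,y_n)}>n$. Since $x_n\neq y_n$ we have $I(x_n)\le d(x_n,y_n)\to 0$, so $(3)$ yields a Bourbaki quasi-Cauchy subsequence of $(x_n)$, and Theorem \ref{a} embeds it in a quasi-Cauchy sequence $(z_m)$ with $z_{m_k}=x_{n_k}$. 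I would then interleave, inserting $y_{n_k}$ immediately after $z_{m_k}$ for each $k$; because $d(x_{n_k},y_{n_k})\to 0$ and $d(y_{n_k},z_{m_k+1})\le d(y_{n_k},x_{n_k})+d(z_{m_k},z_{m_k+1})\to 0$, the enlarged sequence stays quasi-Cauchy, yet it contains the consecutive pairs $(x_{n_k},y_{n_k})$ with increment ratios exceeding $n_k\to\infty$. No single $\lambda$ can then control these increments, contradicting that $f$ is quasi-Cauchy Lipschitz, so $f$ must be Lipschitz in the small.

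The main obstacle is the verification in $(2)\Rightarrow(3)$ that the tent function is truly quasi-Cauchy Lipschitz rather than merely ward continuous: one must confirm that the global $\frac{1}{\delta_n}$-Lipschitz bound for a single bump still governs the increment at the transition where one term of the sequence sits inside $B_{\delta_n}(x_n)$ while its successor lies outside every bump, and that the $\varepsilon$-chain argument genuinely confines each quasi-Cauchy sequence to one bump. Once this ``one bump per quasi-Cauchy sequence'' principle is secured, both nontrivial implications follow without further difficulty.
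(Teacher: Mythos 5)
Your proposal is correct and takes essentially the same route as the paper: the paper handles $(2)\Rightarrow(3)$ by declaring it analogous to the tent-function construction in the proof of $(1)\Rightarrow(2)$ of Theorem \ref{d}, which is precisely what you spell out (including the $\varepsilon$-chain separation forcing the ``one bump per quasi-Cauchy sequence'' principle and the transition estimate at the boundary of a bump), and its proof of $(3)\Rightarrow(1)$ is exactly your interleaving argument producing a quasi-Cauchy sequence with consecutive pairs $(x_{n_k},y_{n_k})$ of unbounded increment ratio. The only difference is expository: the paper leaves the $(2)\Rightarrow(3)$ construction implicit by citation, whereas you verify it in detail, correctly identifying the step that genuinely needs checking.
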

\begin{proof}
	$(1)\Rightarrow (2)$ is obvious and the proof of $(2)\Rightarrow (3)$ is analogous to the proof of $(1)\Rightarrow (2)$ of Theorem \ref{d}.\par
	$(3)\Rightarrow (1)$ Let $f:X\to Y$ be a quasi-Cauchy Lipschitz function. If possible, assume that $f$ is not Lipschitz in the small. Then there exist two sequences $(x_n)$ and $(y_n)$ in $X$ such that $d(x_n,y_n)\to 0$ and $\frac{\rho(f(x_n),f(y_n))}{d(x_n,y_n)}\geq n$ for all $n\in\mathbb{N}$. Consequently from the given condition we can construct a quasi-Cauchy sequence $(z_k)$ with the property that $z_{k_p}=x_{n_p}$ and $z_{k_p+1}=y_{n_p}$ for some subsequences $(z_{k_p}),(x_{n_p})$ and $(y_{n_p})$ of $(z_k),(x_n)$ and $(y_n)$ respectively. But this contradicts the fact that $f$ is quasi-Cauchy Lipschitz. Hence $f$ must be Lipschitz in the small.
\end{proof}
\begin{Theorem}
	Let $(X,d)$ be a metric space. Then every function $f:X\to \mathbb{R}$ is quasi-Cauchy Lipschitz iff restriction of $f$ on $A$ is Lipschitz in the small for every Bourbaki quasi-precompact subset $A$ of $X$.
\end{Theorem}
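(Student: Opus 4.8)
The plan is to establish the two implications separately, proving the easier sufficiency directly and the necessity by contraposition, in both cases exploiting the bridge between Bourbaki quasi-Cauchy and genuine quasi-Cauchy sequences supplied by Theorems \ref{a} and \ref{b}. For sufficiency I would begin with an arbitrary quasi-Cauchy sequence $(x_n)$ in $X$ and take $A=\{x_n:n\in\mathbb{N}\}$ to be its range. Since every quasi-Cauchy sequence is Bourbaki quasi-precompact in itself (as noted right after the definition), $A$ is a Bourbaki quasi-precompact subset of $X$, so by hypothesis $f|_A$ is Lipschitz in the small; this yields $\delta>0$ and $K>0$ with $\rho(f(x),f(y))\le K\,d(x,y)$ whenever $x,y\in A$ and $d(x,y)<\delta$. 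Because $d(x_k,x_{k+1})\to 0$, the estimate $\rho(f(x_k),f(x_{k+1}))\le K\,d(x_k,x_{k+1})$ holds for all $k$ beyond some $k_0$, and the finitely many remaining ratios (ignoring indices with $x_k=x_{k+1}$, where the inequality is automatic) can be absorbed by enlarging the constant to a suitable $\lambda\ge K$. This single $\lambda$ works for all $k$, so $f$ is quasi-Cauchy Lipschitz.

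For necessity I would assume $f$ is quasi-Cauchy Lipschitz and suppose, for contradiction, that $f|_A$ fails to be Lipschitz in the small on some Bourbaki quasi-precompact $A$. Applying the sequential characterisation of Lipschitz in the small (Lemma 4.1) to $f|_A$ produces sequences $(x_n),(y_n)$ in $A$ with $d(x_n,y_n)\to 0$ but $\rho(f(x_n),f(y_n))> n\,d(x_n,y_n)$ for every $n$. Since $A$ is Bourbaki quasi-precompact in $X$, Theorem \ref{b} lets me pass to a subsequence making $(x_n)$ Bourbaki quasi-Cauchy in $X$, and Theorem \ref{a} then realises it as a subsequence $z_{m_n}=x_n$ of a quasi-Cauchy sequence $(z_m)$.

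The decisive step is to splice the companion points $y_n$ into $(z_m)$ by inserting each $y_n$ immediately after $z_{m_n}=x_n$: the two new consecutive gaps created are $d(x_n,y_n)\to 0$ and $d(y_n,z_{m_n+1})\le d(y_n,x_n)+d(z_{m_n},z_{m_n+1})\to 0$ (the latter summand vanishing because $(z_m)$ is quasi-Cauchy), while every other gap is inherited from $(z_m)$; hence the enlarged sequence is again quasi-Cauchy. Along it the consecutive pairs $(x_n,y_n)$ force $\rho(f(x_n),f(y_n))/d(x_n,y_n)>n\to\infty$, so no single $\lambda$ can bound the Lipschitz ratios, contradicting quasi-Cauchy Lipschitzness. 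I expect this interleaving to be the only genuinely delicate point, since it is exactly where the hypothesis $d(x_n,y_n)\to 0$ is needed to keep the constructed sequence quasi-Cauchy.
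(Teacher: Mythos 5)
Your proposal is correct, and both directions are watertight: the sufficiency via the observation that the range of a quasi-Cauchy sequence is Bourbaki quasi-precompact, and the necessity via Lemma 4.1, Theorems \ref{a} and \ref{b}, and the splicing of the companion points $y_n$ into the quasi-Cauchy sequence supplied by Theorem \ref{a}. The paper actually states this theorem without any proof, and your argument is precisely the intended one — the same splicing device appears verbatim in the paper's proofs of the neighbouring coincidence results (e.g. $(3)\Rightarrow(1)$ of the theorem on Lipschitz-in-the-small coincidence and $(5)\Rightarrow(1)$ of the ward/uniform continuity theorem) — so nothing further is needed.
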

In \cite{Beer3, g1} it was shown that the set of all real valued locally Lipschitz (Cauchy Lipschitz and Lipschitz in the small) functions on an arbitrary metric space $(X,d)$ are uniformly dense in the set of all real valued continuous (Cauchy continuous and uniformly continuous respectively) functions. Interestingly this same pattern follows in case of the real valued ward continuous functions also. In our final result, we will establish the density of the set of all real valued quasi-Cauchy Lipschitz functions on $X$ in the set of all real valued ward continuous functions.
\begin{Theorem}
	Let $(X,d)$ be a metric space. Then every real valued ward continuous function defined on $X$ can be uniformly approximated by real valued quasi-Cauchy Lipschitz functions.
\end{Theorem}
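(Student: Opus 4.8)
The plan is to fix a real valued ward continuous $f$ and $\varepsilon>0$ and to produce a quasi-Cauchy Lipschitz function $g$ with $\sup_{x\in X}|f(x)-g(x)|\leq\varepsilon$; doing this for every $\varepsilon$ is exactly the assertion. The building blocks will be localized inf-convolutions relative to the chain-length (pseudo)metric. For $\delta>0$ write $e_{\delta}(x,y)$ for the infimum of $\sum_{i=1}^{m}d(p_{i-1},p_i)$ over all $\delta$-chains $x=p_0,\dots,p_m=y$, with $e_{\delta}(x,y)=+\infty$ when $x,y$ lie in different $\delta$-chainable components. For each $k\in\mathbb{N}$ I would set $g_k(x)=\inf_{y}\{f(y)+k\,e_{1/k}(x,y)\}$ (after a routine truncation of the infimum to a bounded chain-ball to keep it finite-valued). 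Since $e_{1/k}(x,y)=d(x,y)$ whenever $d(x,y)<1/k$, each $g_k$ is Lipschitz in the small, hence quasi-Cauchy Lipschitz by the already established fact that Lipschitz in the small implies quasi-Cauchy Lipschitz; moreover $g_k\leq f$, the $g_k$ increase in $k$, and, using that ward continuity forces ordinary continuity, $g_k(x)\uparrow f(x)$ pointwise.

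The difficulty is that $g_k\to f$ only pointwise, and here I would turn pointwise closeness into a uniform estimate by gluing the $g_k$ at a \emph{location-dependent} scale rather than using any single $g_k$. The structural facts that make this plausible are the following: a quasi-Cauchy sequence has consecutive gaps tending to $0$, so for every $\delta$ its tail is trapped inside one $\delta$-chainable component, distinct such components being at distance $\geq\delta$; hence it suffices to construct $g$ whose restriction to each $\delta$-chainable component is Lipschitz in the small with possibly component-dependent data, and such a $g$ is automatically quasi-Cauchy Lipschitz. This is precisely compatible with the earlier characterizations that $f$ is ward continuous iff its restriction to every Bourbaki quasi-precompact subset is uniformly continuous and that $g$ is quasi-Cauchy Lipschitz iff its restriction to every Bourbaki quasi-precompact subset is Lipschitz in the small. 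Since every quasi-Cauchy sequence $(x_n)$ satisfies $I(x_n)\to 0$, the regularization should be chosen fine near the accumulation points and coarse on the well-separated isolated part, so that the steep oscillations of $f$ are flattened exactly where a quasi-Cauchy sequence can see them, while the large isolated jumps of $f$ — which no quasi-Cauchy sequence meets infinitely often — are left untouched. Concretely I would let $N(x)$ be the least $k$ with $g_k(x)\geq f(x)-\varepsilon$, subordinate the scale function $N(\cdot)$ to the hierarchy of $2^{-k}$-chainable components so that it varies slowly along fine chains, and define $g$ from the corresponding values $g_{N(\cdot)}$.

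The main obstacle, and the reason no single inf-convolution can succeed, is the absence of a uniform modulus of continuity for a ward continuous function: if for every $\varepsilon$ the approximant $g$ could be taken globally Lipschitz in the small, then $f$ would be a uniform limit of uniformly continuous functions and therefore uniformly continuous, which ward continuous functions need not be. Thus $g$ must genuinely be quasi-Cauchy Lipschitz without being Lipschitz in the small, i.e. its Lipschitz data must be allowed to blow up across $X$, and the whole weight of the argument falls on the patching: one must keep $g$ within $\varepsilon$ of $f$ while guaranteeing that the slopes of $g$ between consecutive terms of \emph{every} quasi-Cauchy sequence stay bounded. I expect the delicate point to be controlling these slopes across the scale changes of $N(\cdot)$; this is where I would lean on ward continuity through its equivalence with uniform continuity on Bourbaki quasi-precompact subsets, to show that the fine-scale oscillation of $f$ inside small chainable components tends to $0$ and hence that the scale function can be arranged to grow slowly enough.
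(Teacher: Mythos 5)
You have correctly isolated the difficulty (the absence of a global modulus for a ward continuous function), but your proposal stops exactly where the proof has to begin: the patching step --- defining $g$ from the values $g_{N(\cdot)}$, "subordinating" the scale function to the hierarchy of $2^{-k}$-chainable components, and verifying simultaneously that $|f-g|\leq\varepsilon$ and that $g$ is quasi-Cauchy Lipschitz --- is never carried out, and you yourself flag it as the point where "the whole weight of the argument falls." Worse, the one precise reduction you do state, namely that it suffices to build $g$ Lipschitz in the small on each $\delta$-chainable component with component-dependent constants, cannot succeed in general. Such a $g$ is uniformly continuous on each $\delta$-component, so this route requires $f$ to be uniformly approximable by uniformly continuous functions on each $\delta$-component; but there are ward continuous $f$ for which this fails at \emph{every} scale $\delta$. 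For instance, let $X=\bigcup_{k}Y_k\subseteq\mathbb{R}^2$ with the clusters $Y_k$ mutually at distance $\geq 1$, where $Y_k=\{s_{k,n}:n\in\mathbb{N}\}\cup\{s_{k,n}':n\in\mathbb{N}\}$, $s_{k,n}=(c_k+0.9n/k,\,0)$ and $s_{k,n}'=(c_k+0.9n/k,\,\tfrac{1}{k+n})$, and put $f(s_{k,n})=0$, $f(s_{k,n}')=1$. Every quasi-Cauchy sequence in $X$ is eventually constant (steps $\geq 1$, then steps $\geq 0.9/k$, occur only finitely often, trapping the tail in a single partner pair, where nonzero steps have constant length), so $f$ is ward continuous, and in fact already quasi-Cauchy Lipschitz; yet for any $\delta>0$, choosing $k$ with $0.9/k<\delta$ puts all of $Y_k$ inside one $\delta$-component, on which $f$ has oscillation $1$ across pairs whose distance $\tfrac{1}{k+n}$ tends to $0$, so no function uniformly continuous on that component is within $\tfrac14$ of $f$. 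Hence no fixed-scale spatial decomposition can work, and the variable-scale version is precisely the part you left undefined.

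The paper's proof avoids spatial patching altogether by exploiting that quasi-Cauchy Lipschitzness only demands a constant $\lambda$ \emph{per quasi-Cauchy sequence}, not any coherent global or component-wise Lipschitz data. It partitions the range rather than the space: with $C_n=\{x:(n-1)\varepsilon<f(x)<(n+1)\varepsilon\}$ and $g_n(x)=\min\{1,d(x,X\setminus C_n)\}$, it forms $h=\frac{1}{g}\sum_{n}n\,g_n$ where $g=\sum_n g_n$ (the Garrido--Jaramillo construction), so that $|\varepsilon h(x)-f(x)|<\varepsilon$ holds automatically for all $x$. The quasi-Cauchy Lipschitz property of $h$ is then checked one sequence at a time: given a quasi-Cauchy sequence $(x_n)$, ward continuity of $f$ produces a sequence-dependent $\delta>0$ with $f(B_\delta(x_n))\subseteq(f(x_n)-\tfrac{\varepsilon}{4},f(x_n)+\tfrac{\varepsilon}{4})$ for all $n$ (otherwise one perturbs $(x_n)$ into a quasi-Cauchy sequence whose image is not quasi-Cauchy), which forces $g(x_n)\geq\delta$ and yields $|h(x_n)-h(x_{n+1})|\leq\frac{10}{\delta^2}d(x_n,x_{n+1})$ for all large $n$; the finitely many initial terms are absorbed into the constant. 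This sequence-dependence of $\delta$ is exactly the substitute for the missing global modulus, and it is what your plan, which tries to encode all Lipschitz data spatially in advance, does not and (by the example above) cannot provide.
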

\begin{proof}
	Let $f$ be a real valued ward continuous function defined on $X$ and let $\varepsilon >0$ be given. First of all for each $n\in \mathbb{Z}$, let us consider the set $C_n = \{x:(n-1)\varepsilon <f(x)<(n+1)\varepsilon\}$ and let us define a function $g_n(x)=\inf\{1,d(x,X\setminus C_n)\}$. Note that by continuity of $f$, the family $\{C_n\}_{n\in \mathbb{Z}}$ satisfies the property that for each $x\in X$, there exists $\delta_x >0$ such that the ball of radius $\delta_x$ with centre $x$ is contained in some $C_m$. Moreover $C_n\cap C_m=\phi$, whenever $|n-m|>1$. So the function $g(x) = \displaystyle\sum_{n \in \mathbb{Z}} g_n(x)$ is well defined and it satisfies that $\delta_x \leq g(x) \leq 2$ for all $x\in X$.

We define $h:X\to \mathbb{R}$ by $h(x) = \frac{1}{g(x)} (\displaystyle\sum_{n \in \mathbb{Z}}n g_n(x))$. Now we are going to prove that $h$ is quasi-Cauchy Lipschitz. Take  a quasi-Cauchy sequence $(x_n)$ in $X$. We claim that there exists $\delta > 0$ such that $f(B_\delta (x_n)) \subseteq (f(x_n)-\frac{\varepsilon}{4},f(x_n)+\frac{\varepsilon}{4})$ for all $n\in\mathbb{N}$. If not, then for each $k\in \mathbb{N}$ there would exist $y_{n_k}\in B_{\frac{1}{k}}(x_{n_k})$ such that $|f(x_{n_k})-f(y_{n_k})|\geq \frac{\varepsilon}{4}$. Now let us define a new sequence  $(z_n)$ by taking $z_{n_k+1}= y_{n_k}$ for each $k\in \mathbb{N}$ and $z_n = x_n,$ otherwise. Clearly $(z_n)$ is quasi-Cauchy and consequently this contradicts the fact that $f$ is ward continuous. Therefore for each $n\in \mathbb{N}$ we can choose $m \in \mathbb{Z}$ such that $B_{\delta} (x_n)$ is contained in $C_m$ and consequently $\delta\leq g(x_n)\leq 2$ for all $n\in \mathbb{N}$. Moreover there exists $n_0 \in \mathbb{N}$ such that $d(x_{n},x_{n+1})<\delta$ for all $n\geq n_0$. Now we will estimate $|h(x_{n})-h(x_{n+1})|$ for every $n\geq n_0$. Clearly for each $n\geq n_0$ there exists $m\in \mathbb{Z}$ such that $x_{n},x_{n+1}$ both are in $C_m$ and so we can obtain that $$|g(x_{n})-g(x_{n+1})|= |(g_{m-1}+g_m+g_{m+1})(x_{n})-(g_{m-1}+g_m+g_{m+1})(x_{n+1})|$$
$$\leq\displaystyle\sum_{i=m-1}^{m+1}|g_i(x_{n})-g_i(x_{n+1})|\leq 3d(x_{n},x_{n+1}).$$ Now from Theorem 1 of \cite{g1}, we can conclude that $|h(x_{n})-h(x_{n+1})|\leq \frac{10}{\delta^2} d(x_{n},x_{n+1})$. Let $\lambda= \max\{\frac{10}{\delta^2},\sup\{\frac{|h(x_{n})-h(x_{n+1})|}{d(x_{n},x_{n+1})}:n<n_0,x_{n+1}\neq x_n\}\}$ and so $|h(x_{n})-h(x_{n+1})|\leq \lambda d(x_{n},x_{n+1})$ for all $n\in \mathbb{N}$. Finally from Theorem 1 of \cite{g1}, $|\varepsilon h(x)-f(x)|< \varepsilon$ for every $x\in X$. Hence $\varepsilon h$ is the required quasi-Cauchy Lipschitz function.
\end{proof}

\end{document}